\documentclass[a4paper, 11pt]{amsart}

\usepackage[letterpaper,margin=1in]{geometry}
\usepackage{amsmath, amsthm, amsfonts, amssymb}
\usepackage{xcolor}
\usepackage{fancyhdr}
\usepackage{microtype} 
\usepackage[colorlinks = true]{hyperref}
\usepackage{paralist}
\hypersetup{
    colorlinks,
    linkcolor={red!50!black},
    citecolor={green!50!black},
    urlcolor={blue!80!black}
}




\newcommand{\bfj}{\mathbf{j}}


\newcommand{\calI}{\mathcal{I}}

\newcommand{\calL}{\mathcal{L}}

\newcommand{\calO}{\mathcal{O}}

\newcommand{\calS}{\mathcal{S}}
\newcommand{\calT}{\mathcal{T}}


\newcommand{\bbA}{\mathbb{A}}

\newcommand{\bbC}{\mathbb{C}}

\newcommand{\bbJ}{\mathbb{J}}
\newcommand{\bbK}{\mathbb{K}}

\newcommand{\bbN}{\mathbb{N}}

\newcommand{\bbP}{\mathbb{P}}

\newcommand{\bbX}{\mathbb{X}}
\newcommand{\bbY}{\mathbb{Y}}






\newcommand{\sfx}{\mathrm{x}}
\newcommand{\sfy}{\mathrm{y}}



\theoremstyle{theorem}
\newtheorem{theorem}{\sc Theorem}[section]  
\newtheorem{proposition}[theorem]{\sc Proposition}   
\newtheorem{corollary}[theorem]{\sc Corollary}        
\newtheorem{lemma}[theorem]{\sc Lemma}                
\newtheorem{definition}[theorem]{\sc Definition}
\newtheorem{example}[theorem]{\sc Example}
\newtheorem{question}{\sc Question}

\theoremstyle{remark}
\newtheorem*{notation}{{\it Notation}}
\newtheorem{remark}[theorem]{Remark}

\usepackage{stmaryrd}
\usepackage{tikz, subcaption}
\usepackage{xypic}
\usepackage{graphicx}
\usepackage{stackengine}

\numberwithin{equation}{section}

\newcommand{\tang}{\calT}
\newcommand{\Sym}{\mathit{Sym}}
\newcommand{\HF}{\mathrm{HF}}

\title[Tangential varieties of Segre-Veronese surfaces are never defective]{Tangential varieties of Segre-Veronese surfaces \\ are never defective}

\author[M. V. Catalisano]{Maria Virginia Catalisano}
\address[M. V. Catalisano]{Dipartimento di Ingegneria Meccanica, Energetica, Gestionale e dei Trasporti, Universita degli studi di Genova, Genoa, Italy}
\email{catalisano@diptem.unige.it}

\author[A. Oneto]{Alessandro Oneto}
\address[A. Oneto]{Universitat Politecnica de Catalunya, Department of Mathematics, Barcelona, Spain}
\email{alessandro.oneto@upc.edu}

\newcommand{\Alessandro}[1]{#1}

\begin{document}

\maketitle

\begin{abstract}
We compute the dimensions of all the secant varieties to the tangential varieties of all Segre-Veronese surfaces. We exploit the typical approach of computing the Hilbert function of special $0$-dimensional schemes on projective plane by using a new degeneration technique.
\end{abstract}

\section{Introduction}

The study of {\it secant varieties} and {\it tangential varieties} is very classical in Algebraic Geometry and goes back to the school of the XIX century. In the last decades, these topics received renewed attention because of their \Alessandro{connections} with more applied sciences \Alessandro{which uses {\it additive decompositions of tensors}}. \Alessandro{For us,} {\it tensors} are multidimensional arrays \Alessandro{of complex numbers and} classical geometric objects as Veronese, Segre, Segre-Veronese varieties, and their tangential varieties, are parametrised by tensors with particular symmetries and structure. \Alessandro{Their $s$-secant variety is the closure of the locus of linear combinations of $s$ many of those} particular tensors. We refer to \cite{L} for an exhaustive description of the fruitful use of classical algebraic geometry in problems regarding tensors decomposition. 

A very important invariant \Alessandro{of these varieties} is their {\it dimension}.

\medskip
A rich literature has been devoted to studying dimensions of secant varieties of special projective varieties. In particular, we mention: 
\begin{enumerate}
	\item[1.] {\it Veronese varieties}, completely solved by J. Alexander and A. Hirschowitz \cite{AH};
	\item[2.] {\it Segre varieties}, solved in few specific cases, e.g., see \cite{AOP, CGG02-Segre, CGG11-SegreP1};
	\item[3.] {\it Segre-Veronese varieties}, solved in even fewer specific cases, e.g., see \cite{AB13-SegreVeronese, CGG05-SegreVeronese, Abr08};
	\item[4.] {\it tangential varieties of Veronese varieties}, completely solved by H. Abo and N. Vannieuwenhoven, see \cite{AV18, BCGI09}.
\end{enumerate}

In this paper, we consider the following question.

\begin{question}\label{question: dimension secants}
	What is the dimension of secant varieties of tangential varieties of Segre-Veronese surfaces?
\end{question}

Let $a, b$ be positive integers. We define the {\bf Segre-Veronese embedding} of $\bbP^1\times \bbP^1$ in bi-degree $(a,b)$ as the embedding of $\bbP^1\times\bbP^1$ with the linear system $\calO_{\bbP^1\times\bbP^1}(a,b)$ of curves of bi-degree $(a,b)$, namely
\begin{eqnarray*}
\nu_{a,b} : &\bbP^1 \times \bbP^1 & \rightarrow \quad \quad \quad \bbP^{(a+1)(b+1)-1},\\
&([s_{0}:s_{1}],[t_{0}:t_{1}]) & \mapsto \quad [s_{0}^at_{0}^b:s_{0}^{a-1}s_{1}t_{0}^b:\ldots:s_{1}^at_{1}^b].
\end{eqnarray*}
 The image of $\nu_{a,b}$ is the {\bf Segre-Veronese surface} of bi-degree $(a,b)$, denoted by $SV_{a,b}$, \Alessandro{which is parametrized by {\it decomposable partially symmetric tensors}.}

Let $V_1$ and $V_2$ be two $2$-dimensional $\bbC$-vector spaces. Let $\Sym(V_i) = \bigoplus_{d\geq 0}\Sym^d(V_i)$ be the symmetric algebra over $V_i$, for $i = 1,2$. If we fix basis $(x_0,x_1)$ and $(y_0,y_1)$ for $V_1$ and $V_2$, respectively, then we have the identifications of the respective symmetric algebras with the polynomial rings \Alessandro{
\begin{equation}\label{eq: identif}
\begin{array}{r c l c r c l}
	\Sym(V_1) &\simeq& \bbC[x_0,x_1]  & \text{and}  & \Sym(V_2) & \simeq & \bbC[y_0,y_1], \\
	v_1 = (v_{10},v_{11}) &\leftrightarrow& \ell_1 = v_{10}x_0+v_{11}x_1 & & v_2 = (v_{20},v_{21}) &\leftrightarrow& \ell_2 = v_{20}y_0+v_{21}y_1  \\
\end{array}
\end{equation}
}
Therefore, $\Sym(V_1) \otimes \Sym(V_2)$ is identified with the bi-graded ring $\calS = \bbC[x_0,x_1;y_0,y_1] = \bigoplus_{a,b}\calS_{a,b}$, where $\calS_{a,b}$ is the $\bbC$-vector space of bi-homogeneous polynomials of bi-degree $(a,b)$, i.e., $\calS_{a,b} = \Sym^a(V_1) \otimes_{\bbK} \Sym^b(V_2)$. \Alessandro{In particular, we consider the monomial basis of $\calS_{a,b}$ given by 
$$
	\left\{{a \choose i_0}{b \choose j_0}x_0^{i_0}x_1^{i_1}y_0^{j_0}y_1^{j_1} ~|~ \substack{i_0,i_1,j_0,j_1 \geq 0 \\ i_0+i_1 = a, j_0+j_1 = b}\right\}.
$$
In this way, the Segre-Veronese embedding of $\bbP^1\times\bbP^1$ in bi-degree $(a,b)$ can be rewritten as }
\begin{eqnarray*}
\nu_{a,b} : & \bbP(V_1) \times \bbP(V_2) & \rightarrow \quad \bbP(\Alessandro{\Sym^a(V_1)\otimes\Sym^b(V_2)}),\\
&([v_1],[v_2]) & \mapsto [v_1^{\otimes a}\otimes v_2^{\otimes b}].
\end{eqnarray*}
Throughout all the paper\Alessandro{, by \eqref{eq: identif}, we} identify the tensor $v_1^{\otimes a}\otimes v_2^{\otimes b}$ with the polynomial $\ell_1^a\ell_2^b$ \Alessandro{and we view the Segre-Veronese variety of bi-degree $(a,b)$ as the projective variety parametrized by these particular bi-homogeneous polynomials.}

\smallskip
Given any projective variety $X \subset \bbP^N$, we define the {\bf tangential variety} of $X$ as the Zariski closure of the union of the tangent spaces at smooth points of $X$, i.e., if $U \subset X$ denotes the open subset of smooth points of $X$, then it is
$$
	\tang(X) := \overline{\bigcup_{P \in U} T_P(X)} \subset \bbP^N,
$$
where $T_P(X)$ denotes the tangent space of $X$ at the point $P$. 

\smallskip
Given any projective variety $X \subset \bbP^N$, we define the {\bf $s$-secant variety} of $X$ as the Zariski closure of the union of all linear spans of $s$-tuples of points on $X$, i.e.,
\begin{equation}\label{equation: expected dimension}
	\sigma_s(X) := \overline{\bigcup_{P_1,\ldots,P_s \in X} \left\langle P_1,\ldots,P_s \right\rangle} \subset \bbP^N,
\end{equation}
where $\langle - \rangle$ denotes the linear span of the points. 

As we said before, we are interested in the dimension of these varieties. By parameter count, we have an {\bf expected dimension} of $\sigma_s(X)$ which is
$$
	{\rm exp}.\dim\sigma_s(X) = \min\{N, s\dim(X) + (s-1)\}.
$$
However, we have varieties whose $s$-secant variety has dimension smaller than the expected one and we call them {\it defective varieties}. In this article we prove that \Alessandro{the} tangential varieties \Alessandro{of all the Segre-Veronese surfaces} are never defective. 

\setcounter{section}{4}
\setcounter{theorem}{5}
\begin{theorem}
	Let $a,b$ be positive integers with $ab > 1$. Then, the tangential variety of any Segre-Veronese surface $SV_{a,b}$ is non defective, i.e., all secant varieties have the expected dimension; namely,
	$$
		\dim\sigma_s(\tang(SV_{a,b})) = \min\{(a+1)(b+1), 5s\}-1.
	$$
\end{theorem}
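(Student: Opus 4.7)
The plan is the standard Terracini--apolarity reduction, followed by an inductive Horace-style degeneration. By Terracini's lemma,
\[
\dim \sigma_s(\tang(SV_{a,b})) + 1 \;=\; \dim \bigl\langle T_{Q_1}\tang(SV_{a,b}),\ldots,T_{Q_s}\tang(SV_{a,b})\bigr\rangle
\]
for $s$ generic points $Q_i \in \tang(SV_{a,b})$. Realising each $Q_i$ as a generic tangent vector to $SV_{a,b}$ at a generic point $P_i \in \bbP^1\times\bbP^1$, apolarity in the bi-graded ring $\calS$ identifies $T_{Q_i}\tang(SV_{a,b})$ with the space of forms in $\calS_{a,b}$ apolar to a $0$-dimensional subscheme $Z_i \subset \bbP^1\times\bbP^1$ of length $5=\dim\tang(SV_{a,b})+1$ supported at $P_i$: the \emph{tangential scheme} determined by $Q_i$. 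The theorem then reduces to showing that, for $s$ general tangential schemes,
\[
\dim \bigl[I(Z_1\cup\cdots\cup Z_s)\bigr]_{(a,b)} \;=\; \max\bigl\{0,\ (a+1)(b+1)-5s\bigr\}.
\]

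By upper-semicontinuity, it is enough to verify this in the two critical regimes $s = \lfloor (a+1)(b+1)/5\rfloor$ and $s = \lceil (a+1)(b+1)/5\rceil$, where one must show respectively that the scheme imposes independent conditions and that the corresponding linear system is empty. I would induct on $b$ (with $a$ treated symmetrically) using the Castelnuovo exact sequence on a fiber $H \cong \bbP^1\times\{\mathrm{pt}\}$: for any subscheme $W\subset\bbP^1\times\bbP^1$,
\[
\dim I(W)_{(a,b)} \;\leq\; \dim I_H(W\cap H)_{a} \;+\; \dim I(\mathrm{Res}_H W)_{(a,b-1)}.
\]
The strategy is to specialise $k$ of the tangential schemes so that their supports lie on $H$, with $k$ balanced so that the trace $(Z_1\cup\cdots\cup Z_k)\cap H$ exactly fills $H$ in degree $a$ and $\mathrm{Res}_H(Z_1\cup\cdots\cup Z_s)$ has the degree that the inductive hypothesis in bi-degree $(a,b-1)$ can handle.

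The main obstacle is that a length-$5$ tangential scheme does not residuate as cleanly as a double point: the trace and residual on $H$ depend sensitively on how the tangent direction encoded in $Z_i$ sits with respect to $H$. The \emph{new degeneration technique} announced in the abstract must carry out a further degeneration of the tangent direction of each specialised $Z_i$ (for instance, aligning it with $H$ or making it transverse) so that trace and residual decompose into familiar pieces --- jets on $\bbP^1$ in the trace, and a controlled union of fat points, reduced points, and smaller tangential schemes in the residual --- and so that the Castelnuovo inequality becomes sharp on both sides. A finite collection of base cases (small $a,b$, handled by direct computation of Hilbert functions) then completes the proof. The hypothesis $ab > 1$ is required only to exclude $(a,b)=(1,1)$, where $SV_{1,1}\subset\bbP^3$ is a smooth quadric and $\tang(SV_{1,1})=\bbP^3$ has trivially defective secant varieties.
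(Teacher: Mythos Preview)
Your high-level reduction is correct and matches the paper: Terracini's lemma plus apolarity converts the problem into computing the Hilbert function of a union of $s$ generic length-$5$ ``tangential'' schemes (the paper calls them $(3,2)$-fat points, cut out by $\wp^3+(\ell)^2$ with $\ell$ a $(1,1)$-form through $P$), and semicontinuity reduces to the two critical values of $s$. Your proposed induction on $b$ via Castelnuovo/Horace is also the paper's strategy.

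However, two substantive points separate your sketch from the paper's proof. First, the paper does \emph{not} work in $\bbP^1\times\bbP^1$ with a ruling fiber $H$. It applies the multiprojective--affine--projective method of \cite{CGG05-SegreVeronese} to transport the problem to $\bbP^2$: the scheme becomes $aQ_1+bQ_2+Y_1+\cdots+Y_s$ where $Q_1,Q_2$ are fixed fat points and the $Y_i$ are $(3,2)$-points in $\bbP^2$. The Horace line $L$ is then a \emph{general} line in $\bbP^2$, not a ruling, and the two fat points $aQ_1,bQ_2$ play an essential role (the line $\overline{Q_1Q_2}$ is repeatedly peeled off as a fixed component). Working only with fibers in $\bbP^1\times\bbP^1$ gives you far less flexibility, and I do not see how to make the arithmetic close that way.

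Second, your guess about the ``new degeneration'' is off. It is not a degeneration of the tangent direction of a single $Z_i$ relative to $H$. Rather, after a first round of reductions (specialising the direction of each $(3,2)$-point along $\overline{Q_1P_i}$ and stripping those lines), one is left with a collection of $2$-jets $J_i$ lying on lines through $Q_1$. The paper's new move (Lemma~2.12) is to let \emph{pairs} of such lines collide, so that two $2$-jets $J_{2i-1},J_{2i}$ degenerate to a single $(2,2)$-jet $W_i$ (scheme defined by $(m^2,n^2)$). These $(2,2)$-jets are then placed with collinear support on a line $L$, and their residues with respect to $L$ are again $2$-jets but now in a \emph{generic} direction (Lemma~2.13). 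This is the mechanism that makes the trace/residue arithmetic close in the base case $b=2$; the general case is a separate, more bookkeeping-style Horace argument (specialising $x+y+z$ of the $(3,2)$-points onto $L$ in three different orientations so that $L$ and $\overline{Q_1Q_2}$ can each be removed twice). Your outline does not contain either of these ideas, and without them the Castelnuovo inequality will not be sharp.
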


\Alessandro{In order to prove our result, we use an approach already used in the literature which involves the study of {\it Hilbert functions} of {\it $0$-dimensional schemes} in the multiprojective space $\bbP^1\times\bbP^1$. In particular, first we use a method introduced by the first author, with A. V. Geramita and A. Gimigliano \cite{CGG05-SegreVeronese}, to reduce our computations to the standard projective plane; and second, we study the dimension of particular linear systems of curves with non-reduced base points by using degeneration techniques which go back to G. Castelnuovo, but have been refined by the enlightening work of J. Alexander and A. Hirschowitz \cite{AHb, AHa, AH, AH00}. However, as far as we know, the particular type of degeneration that we are using has not been exploited before in the literature and we believe that it might be useful to approach other similar problems.}

\setcounter{section}{1}

\paragraph*{\bf Structure of the paper.} In Section \ref{sec: secant and 0-dim schemes}, we show how the dimension of secant varieties can be computed by studying the Hilbert function of $0$-dimensional schemes and we introduce the main tools that we use in our proofs, such as the {\it multiprojective-affine-projective method} and {\it la m\'ethode d'Horace diff\'erentielle}. In Section \ref{sec: lemmata}, we consider the cases of small bi-degrees, i.e., when $b \leq 2$. These will be the base steps for our inductive proof of the general case that we present in Section~\ref{sec: main}.

\smallskip
\paragraph*{\bf Acknowledgements.}
The first author was supported by  the Universit\`a degli Studi di Genova through the \Alessandro{``FRA (Fondi per la Ricerca di Ateneo) 2015”}. The second author acknowledges ﬁnancial support from the Spanish Ministry of Economy and Competitiveness, through the María de Maeztu Programme for Units of Excellence in R\&D (MDM-2014-0445).

\section{Secant varieties and $0$-dimensional schemes}\label{sec: secant and 0-dim schemes}

In this section we recall some basic constructions and we explain how they are used to reduce the problem of computing dimensions of secant varieties to the problem of computing Hilbert functions of special $0$-dimensional schemes.

\subsection{Terracini's Lemma} A standard way to compute the dimension of an algebraic variety is to look at its tangent space at a general point. In the case of secant varieties, the structure of tangent spaces is very nicely described by a classical result of A.~Terracini \cite{T11}.

\begin{lemma}[Terracini's Lemma \cite{T11}]
	Let $X$ be a projective variety. Let $P_1,\ldots,P_s$ be general points on $X$ and let $P$ be a general point on their linear span. Then,
	$$
		T_P\sigma_s(X) = \left\langle T_{P_1}X,\ldots,T_{P_s}X\right\rangle.
	$$
\end{lemma}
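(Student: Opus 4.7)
The plan is to prove Terracini's Lemma by constructing an explicit parametrization of the secant variety, computing the image of its differential at a general point, and then invoking generic smoothness (valid since we work over $\bbC$) to identify this image with the tangent space of $\sigma_s(X)$.

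First, I would pass to affine cones: let $\widehat{X} \subset \bbA^{N+1}$ denote the affine cone over $X$, and $\widehat{\sigma_s(X)} \subset \bbA^{N+1}$ the affine cone over $\sigma_s(X)$. The projective tangent space $T_P(X)$, viewed as a linear subvariety of $\bbP^N$, corresponds under affinization to the affine tangent space $T_{\widehat{P}}\widehat{X}$ for any lift $\widehat{P}$ of $P$; in particular, since $\widehat{X}$ is a cone, $T_{\widehat{P}}\widehat{X}$ contains the line $\bbC\cdot\widehat{P}$. With this reduction, it suffices to prove the equality of affine tangent spaces $T_{\widehat{P}}\widehat{\sigma_s(X)} = T_{\widehat{P}_1}\widehat{X} + \cdots + T_{\widehat{P}_s}\widehat{X}$.

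Next, I would introduce the parametrization
$$
\mu : \widehat{X} \times \cdots \times \widehat{X} \longrightarrow \widehat{\sigma_s(X)}, \qquad (Q_1,\ldots,Q_s) \longmapsto Q_1+\cdots+Q_s.
$$
Because $\widehat{X}$ is a cone, every point of the linear span $\langle P_1,\ldots,P_s\rangle$ can be written as $\sum t_i \widehat{P}_i$ with $t_i\widehat{P}_i \in \widehat{X}$; hence $\mu$ is dominant onto $\widehat{\sigma_s(X)}$. Moreover, a general choice of $(Q_1,\ldots,Q_s)$ in the source corresponds precisely to choosing general $P_i \in X$ and a general $P$ in their linear span, so it suffices to analyze $\mu$ at such a general $(Q_1,\ldots,Q_s)$. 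A direct computation shows that the differential splits as the sum of the inclusions of the factors, i.e.,
$$
d\mu_{(Q_1,\ldots,Q_s)} : T_{Q_1}\widehat{X} \oplus \cdots \oplus T_{Q_s}\widehat{X} \longrightarrow T_{\widehat{P}}\widehat{\sigma_s(X)}, \qquad (v_1,\ldots,v_s) \longmapsto v_1+\cdots+v_s,
$$
so its image inside $\bbA^{N+1}$ is exactly the sum $T_{Q_1}\widehat{X} + \cdots + T_{Q_s}\widehat{X}$, which is the affine version of $\langle T_{P_1}X,\ldots,T_{P_s}X\rangle$. This already yields the containment $\langle T_{P_1}X,\ldots,T_{P_s}X\rangle \subseteq T_P\sigma_s(X)$.

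To obtain the reverse containment, I would invoke generic smoothness: since $\mu$ is a dominant morphism of varieties over an algebraically closed field of characteristic zero, there is a dense open subset of the source on which $d\mu$ is surjective onto the tangent space of the image, i.e., onto $T_{\widehat{P}}\widehat{\sigma_s(X)}$. Taking $(Q_1,\ldots,Q_s)$ in this open subset (general $P_i$'s and general $P$ on their span), the image of $d\mu$ computed above must coincide with $T_{\widehat{P}}\widehat{\sigma_s(X)}$; passing back to projective tangent spaces gives the desired equality. The main subtlety of the argument is precisely this generic smoothness step: one must check that a general choice of $(P_1,\ldots,P_s,P)$ as in the statement corresponds to a general point in the source of $\mu$ where $d\mu$ attains maximal rank, which is ensured by characteristic zero together with the dominance of $\mu$.
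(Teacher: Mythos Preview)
The paper does not supply its own proof of Terracini's Lemma: the statement is quoted as a classical result with a reference to \cite{T11}, and the exposition immediately proceeds to apply it. There is therefore no in-paper argument to compare your proposal against.

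That said, your proof is correct and is essentially the standard modern argument. Passing to affine cones, parametrizing $\widehat{\sigma_s(X)}$ by the addition map $\mu$, and computing $d\mu$ gives the containment $\langle T_{P_1}X,\ldots,T_{P_s}X\rangle \subseteq T_P\sigma_s(X)$ for free; the reverse inclusion then follows from generic smoothness in characteristic zero, exactly as you wrote. The only point worth making explicit is that generic smoothness is applied on the smooth locus of the source $\widehat{X}^{\,s}$, which is automatic since the $P_i$ are general and hence smooth on $X$; you allude to this but it could be stated more plainly. Otherwise the argument is complete.
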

\noindent \Alessandro{Therefore, in order to understand the dimension of the general tangent space of the secant variety $\sigma_s(X)$, we need to compute the dimension of the linear span of the tangent spaces to $X$ at $s$ general points. We do it in details for the tangential varieties of Segre-Veronese surfaces.}

\smallskip
Let $\ell_1 \in \calS_{1,0}$ and $\ell_2 \in \calS_{0,1}$. Then, we consider the bi-homogeneous polynomial $\ell_1^a\ell_2^b \in \calS_{a,b}$ which represents a point on the Segre-Veronese variety $SV_{a,b}$. Now, if we consider two general linear forms $m_1 \in \calS_{1,0}$ and $m_2 \in \calS_{0,1}$, then
$$
	\left.\frac{d}{dt}\right|_{t=0}(\ell_1+tm_1)^a(\ell_2+tm_2)^b = a\ell_1^{a-1}\ell_2^bm_1 + b\ell_1^a\ell_2^{b-1}m_2;
$$
therefore, we obtain that 
$$
	T_{[\ell_1^a\ell_2^b]} SV_{a,b} = \bbP\left( \left\langle \ell_1^{a-1}\ell_2^b \cdot\calS_{1,0},~\ell_1^{a}\ell_2^{b-1} \cdot \calS_{0,1} \right\rangle \right).
$$
Hence, the tangential variety $\tang(SV_{a,b})$ is the image of the embedding
\begin{eqnarray*}
\tau_{a,b} : & \big(\bbP(V_1)\times\bbP(V_2)\big)\times \big(\bbP(V_1)\times\bbP(V_2)\big)  & \rightarrow  ~~~~~~~~\bbP({\calS_{a,b}}),\\
&~~~\big(([\ell_1],[\ell_2])~;~([m_1],[m_2])\big) & \mapsto [\ell_1^{a-1}\ell_2^bm_1+\ell_1^a\ell_2^{b-1}m_2].
\end{eqnarray*}
\begin{remark}\label{rmk: (a,b) = (1,1)}
	The variety $SV_{1,1}$ is the Segre surface of $\bbP^3$ whose tangential variety clearly fills the entire ambient space. For this reason, we will always consider pairs of positive integers $(a,b)$ where at least one is strictly bigger than $1$. Hence, from now on, we assume $ab > 1$.
\end{remark}

Now, fix $\ell_1,m_1 \in \calS_{1,0}$ and $\ell_2,m_2 \in \calS_{0,1}$. For any $h_1,k_1 \in \calS_{1,0}$ and $h_2,k_2\in \calS_{0,1}$, we have
\begin{align*}
	\left.\frac{d}{dt}\right|_{t=0} (\ell_1 + th_1)^{a-1}&(m_1+tk_1)(\ell_2+th_2)^b + (\ell_1 + th_1)^{a}(\ell_2+th_2)^{b-1}(m_2+tk_2) =  \\
	& = (a-1)\ell_1^{a-2}h_1m_1\ell_2^b + \ell_1^{a-1}k_1\ell_2^b + b\ell_1^{a-1}m_1\ell_2^{b-1}h_2 +  \\ & \quad \quad + a\ell_1^{a-1}h_1\ell_2^{b-1}m_2 + (b-1)\ell_1^a\ell_2^{b-2}h_2m_2 + \ell_1^a\ell_2^{b-1}k_2.
\end{align*}
\Alessandro{Note that, if $b = 1$ (or $a = 1$, resp.), the summand where $\ell_2$ is appearing with exponent $(b-2)$ (or where $\ell_1$ is appearing with exponent $(a-2)$, resp.) vanishes since it appears multiplied by the coefficient $(b-1)$ (or $(a-1)$, resp.).}

Therefore, we have that, if $P = \tau_{a,b}\left(([\ell_1],[\ell_2]),([m_1,m_2])\right)\Alessandro{\in\calT(SV_{a,b})}$, then
\begin{align*}
	T_{P} (\tang(SV_{a,b})) =  \bbP\left( \left\langle \ell_1^a\ell_2^{b-1}\cdot \calS_{0,1},~ \ell_1^{a-1}\ell_2^b \cdot \calS_{1,0},~ \right.\right. &\left.\left. \ell_1^{a-2}\ell_2^{b-1}\big((a-1) m_1\ell_2 + a\ell_1 m_2 \big) \cdot \calS_{1,0}, \right.\right.  \\
	& \quad\quad\left.\left. \ell_1^{a-1}\ell_2^{b-2}\big(b m_1\ell_2 + (b-1)\ell_1 m_2 \big)\cdot \calS_{0,1} \right\rangle \right)
\end{align*}
From this description of the general tangent space to the tangential variety, we can conclude that the tangential variety $\tang(SV_{a,b})$ has the expected dimension.
\begin{lemma}\label{lemma: dimension tangential}
	Let $(a,b)$ be a pair of positive integers with $ab > 1$. Then, $\tang(SV_{a,b})$ is $4$-dimensional.
\end{lemma}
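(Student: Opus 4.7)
The plan is to prove the lemma by directly analyzing the description of the tangent space $T_P(\tang(SV_{a,b}))$ given just above the statement, and showing it has projective dimension exactly $4$. An upper bound $\dim \tang(SV_{a,b}) \leq 4$ is automatic, since $\tang(SV_{a,b})$ is the image of the parametrization $\tau_{a,b}$, whose source $(\bbP^1\times\bbP^1)\times(\bbP^1\times\bbP^1)$ has dimension $4$. So it suffices to exhibit a single smooth point where the tangent space has projective dimension $4$, i.e., where the $\bbC$-linear span of the four summands computed above has vector space dimension $5$.

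I would fix generic $\ell_1,m_1\in\calS_{1,0}$ and $\ell_2,m_2\in\calS_{0,1}$, so that $\{\ell_1,m_1\}$ and $\{\ell_2,m_2\}$ are bases, and use them to rewrite each of the four summands. The first two summands, $\ell_1^a\ell_2^{b-1}\cdot\calS_{0,1}$ and $\ell_1^{a-1}\ell_2^b\cdot\calS_{1,0}$, together span the $3$-dimensional space with basis
$$P_1=\ell_1^a\ell_2^b,\qquad P_2=\ell_1^{a-1}m_1\ell_2^b,\qquad P_3=\ell_1^a\ell_2^{b-1}m_2.$$
For the third summand, multiplying the generator by $\ell_1$ gives a combination of $P_2,P_3$ already in the span, while multiplying by $m_1$ gives the new element
$$P_4=(a-1)\ell_1^{a-2}m_1^2\ell_2^b+a\,\ell_1^{a-1}m_1m_2\ell_2^{b-1}.$$
Symmetrically, the fourth summand contributes
$$P_5=b\,\ell_1^{a-1}m_1\ell_2^{b-1}m_2+(b-1)\ell_1^a\ell_2^{b-2}m_2^2.$$
So everything reduces to showing that $P_1,\ldots,P_5$ are linearly independent in $\calS_{a,b}$.

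When $a,b\geq 2$, linear independence is immediate by inspecting the multi-exponents in the monomial basis $\ell_1^{a-i}m_1^i\ell_2^{b-j}m_2^j$: $P_4$ contains the monomial $\ell_1^{a-2}m_1^2\ell_2^b$, which does not appear in any other $P_i$, and similarly $P_5$ contains $\ell_1^a\ell_2^{b-2}m_2^2$, unique to it. The only real obstacle, and the point I would handle most carefully, is the degenerate case $a=1$ or $b=1$ (forcing the other to be at least $2$ by the hypothesis $ab>1$), where either the $(a-1)$ or the $(b-1)$ coefficient in the formula vanishes and one of the distinguishing monomials disappears. When $a=1$, for instance, $P_4$ collapses to $m_1m_2\ell_2^{b-1}$, but since $b\geq 2$ the distinguishing monomial $\ell_1\ell_2^{b-2}m_2^2$ in $P_5$ is still present; the case $b=1$ is symmetric. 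Checking these two small cases by hand gives linear independence in all regimes, which concludes the argument.
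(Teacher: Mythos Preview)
Your proof is correct and follows essentially the same approach as the paper: both compute the affine cone of the tangent space at a general point as the span of the same five explicit polynomials (the paper just specializes $\ell_1=x_0$, $m_1=x_1$, $\ell_2=y_0$, $m_2=y_1$ up front) and then check linear independence, with the degenerate cases $a=1$ or $b=1$ handled by the observation that the corresponding coefficient $(a-1)$ or $(b-1)$ vanishes. Your write-up is slightly more detailed on the independence verification and makes the upper bound from the parametrization explicit, but there is no substantive difference in method.
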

\begin{proof}
	Let $P$ be a general point of $\tang(SV_{a,b})$. Now, up to change of coordinates, we may assume
	$$
		\ell_1 = x_0,~m_1 = x_1 \quad \text{ and }  \quad \ell_2 = y_0, m_2 = y_1.
	$$
	By direct computations, we have that the affine cone over $T_P(\tang(SV_{a,b}))$ is
	\begin{align*}
		\left\langle x_0^ay_0^b,~ x_0^ay_0^{b-1}y_1,~ x_0^{a-1}x_1y_0^b,~ \right.& (a-1)x_0^{a-2}x_1^2y_0^b + a x_0^{a-1}x_1y_0^{b-1}y_1, \\ & \quad \quad \left.
		bx_0^{a-1}x_1y_0^{b-1}y_1 + (b-1)x_0^ay_0^{b-2}y_1^2\right\rangle.
	\end{align*}
	\Alessandro{As mentioned above, if either $a = 1$ (or $b = 1$, resp.), the summand where $x_0$ is appearing with exponent $(a-2)$ (or $y_0$ is appearing with exponent $(b-2)$, resp.) vanishes because it is multiplied by $(a-1)$ (or $(b-1)$, resp.). Since $ab > 1$, this} linear space clearly has dimension $5$. Thus, the claim follows.
\end{proof}
Therefore, by \eqref{equation: expected dimension}, we have that 
\begin{equation}\label{equation: expected dimension tangent}
	{\rm exp}.\dim \sigma_s(\tang(SV_{a,b})) = \min\left\{(a+1)(b+1), 5s\right\} - 1.
\end{equation}

\subsection{Apolarity Theory \Alessandro{and Fat points}}\label{section: apolarity}
For higher secant varieties, similar computations as Lemma \ref{lemma: dimension tangential} are not feasible. In order to overcome this difficulty, a classical strategy is to use {\it Apolarity Theory}. Here we recall the basic constructions, but, for exhaustive references on this topic, we refer to \cite{IK, G}.

\smallskip
Let $\calS = \bbC[x_0,x_1;y_0,y_1] = \bigoplus_{(a,b)\in\bbN^2} \calS_{a,b}$ be the bi-graded polynomial ring. Any bi-homogeneous ideal $I$ inherits the grading, namely, we have $I = \bigoplus_{a,b} I_{a,b}$, where $I_{a,b} = I \cap \calS_{a,b}$. Fixed a bi-degree $(a,b)$, for any $\alpha = (\alpha_0,\alpha_1)\in \bbN^2$ and $\beta = (\beta_0,\beta_1) \in \bbN^2$ such that $|\alpha| = \alpha_0+\alpha_1 = a$ and $|\beta| = \beta_0+\beta_1 = b$, we denote by $\sfx^\alpha\sfy^\beta$ the monomial $x_0^{\alpha_0}x_1^{\alpha_1}y_0^{\beta_0}y_1^{\beta_1}$; hence, for any polynomial $f \in \calS_{a,b}$, we write $f = \sum_{\substack{\alpha,\beta \in \bbN^2, \\ |\alpha|=a,~|\beta|=b}} f_{\alpha,\beta}\sfx^\alpha\sfy^\beta$. For any bi-degree $(a,b)\in\bbN^2$, we consider the non-degenerate apolar pairing 
$$
 	\circ : \calS_{a,b} \times \calS_{a,b} \longrightarrow \bbC, \quad\quad (f,g) \mapsto \sum_{\substack{\alpha,\beta \in \bbN^2, \\ |\alpha|=a,~|\beta|=b}} f_{\alpha,\beta}g_{\alpha,\beta}.
$$ 
Now, given a subspace $W \subset \calS_{a,b}$, we denote by $W^\perp$ the perpendicular space with respect to the apolar pairing, i.e, $W^\perp = \{f \in \calS_{a,b} ~|~ f \circ g = 0,~\forall g \in W\}$. From this definition, it is easy to prove that, given $W_1,\ldots,W_s \subset \calS_{a,b}$, we have
\begin{equation}\label{equation: intersection perp}
	\left\langle W_1,\ldots,W_s \right\rangle^\perp = W_1^\perp \cap \ldots \cap W_s^\perp.
\end{equation}
\begin{remark}\label{lemma: tangential 0-dimensional ideal}
	Let $P = \tau_{a,b}\big(([\ell_1],[\ell_2]),([m_1],[m_2])\big)\in \tang(SV_{a,b})$ be a general point and let $W$ be the affine cone over the tangent space, i.e., $\bbP(W) = T_P(\tang(SV_{a,b}))$. Then, we may observe that \Alessandro{$\wp^3_{a,b} \subset W^\perp \subset \wp^2_{a,b}$}, where $\wp$ is the ideal defining the point $([\ell_1],[\ell_2]) \in \bbP^1\times\bbP^1$.
	
Indeed, up to change of coordinates, we may assume
	$$
		\ell_1 = x_0,~m_1 = x_1 \quad \text{ and }  \quad \ell_2 = y_0, ~m_2 = y_1.
	$$
	Therefore, \Alessandro{as in the proof} of Lemma \ref{lemma: dimension tangential}, we have
	\begin{align}
		W = \left\langle x_0^ay_0^b,~ x_0^ay_0^{b-1}y_1,~ x_0^{a-1}x_1y_0^b,~ \right.& (a-1)x_0^{a-2}x_1^2y_0^{b} + a x_0^{a-1}x_1y_0^{b-1}y_1, \nonumber \\ & \quad \quad \left. bx_0^{a-1}x_1y_0^{b-1}y_1   +  (b-1)x_0^{a}y_0^{b-2}y_1^2\right\rangle \subset \calS_{a,b}. \label{eq: explicit tangent}
	\end{align}
	\Alessandro{It is easy to check that 
	 $$[\wp^2_{a,b}]^\perp = \left\langle x_0^ay_0^b,~ x_0^ay_0^{b-1}y_1,~ x_0^{a-1}x_1y_0^b \right\rangle \subset W \quad \text{and} \quad\wp^3_{a,b} = (x_1^3,x_1^2y_1,x_1y_1^2,y_1^3)_{a,b} \subset W^\perp;$$ therefore, since the apolarity pairing is non-degenerate, we obtain that
	$
		\wp^{3}_{a,b} \subset W^\perp \subset \wp^{2}_{a,b}.
	$}
\end{remark}

\begin{definition}
	Let $P \in \bbP^1\times\bbP^1$ be a point defined by the ideal $\wp$. We call {\bf fat point} of {\bf multiplicity $m$}, or $m$-fat point, and {\bf support in $P$}, the $0$-dimensional scheme $mP$  defined by the ideal $\wp^m$. 
\end{definition}

Therefore, from Remark \ref{lemma: tangential 0-dimensional ideal}, we have that the general tangent space to the tangential variety of the Segre-Veronese surface $SV_{a,b}$ is the projectivisation of a $5$-dimensional vector space whose perpendicular is the bi-homogeneous part in bi-degree $(a,b)$ of an ideal describing a $0$-dimensional scheme of \Alessandro{length} $5$ which is contained in between a $2$-fat point and a $3$-fat point. In the next lemma, we describe better the structure of the latter $0$-dimensional scheme.

\begin{lemma}\label{lemma: explicit tangent}
	Let $P = \tau_{a,b}\big(([\ell_1],[\ell_2]),([m_1],[m_2])\big)\in \tang(SV_{a,b})$ be a general point and let $W$ be the affine cone over the tangent space, i.e., $\bbP(W) = T_P(\tang(SV_{a,b}))$. Then, $W^\perp = [\wp^3 + I^2]_{a,b}$, where $\wp$ is the ideal of the point $P$ and $I$ is the principal ideal $I = (\ell_1m_2+\ell_2m_1)$.
\end{lemma}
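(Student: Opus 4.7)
The plan is to fix coordinates, establish the two containments, and match dimensions. By $GL(V_1)\times GL(V_2)$-equivariance, I may assume $\ell_1=x_0$, $m_1=x_1$, $\ell_2=y_0$, $m_2=y_1$, as already done in the proof of Lemma \ref{lemma: dimension tangential}. In these coordinates $\wp=(x_1,y_1)$, $I=(x_0 y_1 + y_0 x_1)$, and $W$ is the explicit $5$-dimensional subspace of $\calS_{a,b}$ displayed in \eqref{eq: explicit tangent}. The containment $[\wp^3]_{a,b}\subseteq W^{\perp}$, together with the sandwich $W^{\perp}\subseteq[\wp^2]_{a,b}$, is already recorded in Remark \ref{lemma: tangential 0-dimensional ideal}.

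The first substantive step is to prove $[I^2]_{a,b}\subseteq W^{\perp}$. Since $I$ is principal of bi-degree $(1,1)$, every element of $[I^2]_{a,b}$ has the form $(x_0 y_1 + y_0 x_1)^2\cdot h$ with $h\in\calS_{a-2,b-2}$, so in particular $[I^2]_{a,b}=0$ whenever $a\le 1$ or $b\le 1$. Orthogonality with the three monomial generators $x_0^a y_0^b$, $x_0^a y_0^{b-1} y_1$, $x_0^{a-1} x_1 y_0^b$ of $W$ is automatic from $I^2\subset\wp^2$. The heart of the matter is the pairing with the two "mixed" generators $(a-1) x_0^{a-2} x_1^2 y_0^b + a\, x_0^{a-1} x_1 y_0^{b-1} y_1$ and $b\, x_0^{a-1} x_1 y_0^{b-1} y_1 + (b-1) x_0^a y_0^{b-2} y_1^2$: I would expand $(x_0 y_1 + y_0 x_1)^2\cdot h$ in the monomial basis, read off the coefficients of the three order-$2$ monomials $x_0^{a-2} x_1^2 y_0^b$, $x_0^{a-1} x_1 y_0^{b-1} y_1$, $x_0^a y_0^{b-2} y_1^2$, and verify that the required apolar pairings vanish for every $h$, the cancellations matching the combinatorics of the expansion with the weights $(a-1,a)$ and $(b,b-1)$ baked into the generators of $W$.

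To conclude I would compare dimensions. By Lemma \ref{lemma: dimension tangential} we have $\dim W^{\perp}=(a+1)(b+1)-5$. For $a,b\ge 2$, in the affine chart $u=x_1/x_0$, $v=y_1/y_0$ centred at $P$, the ideal $\wp^3+I^2$ becomes $(u,v)^3+(L)^2$ for a local linear form $L$, which defines a $0$-dimensional scheme of length $5$: indeed $\wp^3$ is the cube of the maximal ideal and $L^2$ contributes a single nonzero class in $\frakm^2/\frakm^3$. Hence in bi-degree $(a,b)$ the Hilbert function of this scheme has stabilized at $5$, yielding $\dim[\wp^3+I^2]_{a,b}=(a+1)(b+1)-5$ and forcing equality with $W^{\perp}$. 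When exactly one of $a,b$ equals $1$ (the case $a=b=1$ is excluded by Remark \ref{rmk: (a,b) = (1,1)}), $[I^2]_{a,b}$ vanishes for bi-degree reasons and a direct monomial count in $[\wp^3]_{a,b}$ gives the same dimension. The only genuine obstacle is the combinatorial cancellation in the middle paragraph, where one must verify that the precise coefficients in the mixed generators of $W$ are exactly what is needed to annihilate $(x_0 y_1 + y_0 x_1)^2\cdot h$ under the apolar pairing; everything else is bookkeeping.
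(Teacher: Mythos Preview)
Your proof is correct and follows the same overall strategy as the paper: fix coordinates, establish the containment $[\wp^3+I^2]_{a,b}\subseteq W^\perp$, then match dimensions. The paper's dimension count is a bit more economical than yours: rather than computing $\dim[\wp^3+I^2]_{a,b}$ via the Hilbert function of the length-$5$ local scheme (and handling the boundary cases $a=1$ or $b=1$ separately), it simply notes that $W\subseteq[\wp^3_{a,b}]^\perp\cap[I^2_{a,b}]^\perp\subsetneq[\wp^3_{a,b}]^\perp$, and since $\dim[\wp^3_{a,b}]^\perp=6$ while $\dim W=5$, the intersection must equal $W$; this sidesteps the local-ring computation and the case split.
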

\begin{proof}
		Up to change of coordinates, we may assume
	$$
		\ell_1 = x_0,~m_1 = x_1 \quad \text{ and }  \quad \ell_2 = y_0,~ m_2 = y_1.
	$$
	Let $\wp = (x_1,y_1)$ and $I = (x_1y_0+x_0y_1)$. \Alessandro{Now, from \eqref{eq: explicit tangent}, it follows} that
	$$
		W \subset \Alessandro{[\wp^3_{a,b}]^\perp\cap [I^2_{a,b}]^\perp \subsetneq [\wp^3_{a,b}]^\perp.}
	$$
	The latter inequality is strict; therefore, since \Alessandro{$\dim [\wp^3_{a,b}]^\perp = 6$, we get that $\dim [\wp^3_{a,b}]^\perp\cap [I^2_{a,b}]^\perp  \leq 5$} and, since  $\dim W = 5$, equality follows. Hence, since the apolarity pairing is non-degenerate,
	$$
		W^\perp = [\wp^3+I^2]_{a,b}.
	$$
\end{proof}
From these results, we obtain that the general tangent space to the tangential variety to the Segre-Veronese surface $SV_{a,b}$ can be described in terms of a connected $0$-dimensional scheme of \Alessandro{length} $5$ contained in between a \Alessandro{$2$-fat and a $3$-fat} point. Moreover, such a $0$-dimensional scheme is independent from the choice of $a,b$. Therefore, we introduce the following definition.
\begin{definition}\label{def: (3,2)-points P1xP1}
	Let $\wp$ be the prime ideal defining a point $P$ in $\bbP^1\times\bbP^1$ and let $(\ell)$ the principal ideal generated by a $(1,1)$-form passing through $P$. The $0$-dimensional scheme defined by $\wp^3+(\ell)^2$ is called {\bf $(3,2)$-fat point} with support at $P$. We call $P$ the {\bf support} and $\ell$ the {\bf direction} of the scheme.
\end{definition}
Now, by using Terracini's Lemma, we get that, from Lemma \ref{lemma: explicit tangent} and \eqref{equation: intersection perp}, 
\begin{equation}\label{equation: dimension of secants and HF}
	\dim \sigma_s(\tang(SV_{a,b})) = \dim \calS_{a,b}/I(\bbX)_{a,b}-1,
\end{equation}
where $\bbX$ is the union of $s$ many $(3,2)$-fat points with generic support. 

\medskip
Let us recall the definition of {\it Hilbert function}. Since we will use it both in the standard graded and in the multi-graded case, we present this definition in a general setting.
\begin{definition}
	Let $\calS = \bigoplus_{h\in H}\calS_h$ be a polynomial ring graded over a semigroup $H$. Let $I = \bigoplus_{h \in H} \subset \calS$ be a $H$-homogeneous ideal, i.e., an ideal generated by homogeneous elements. For any $h\in H$, we call {\bf Hilbert function} of $\calS/I$ in degree $h$, the dimension, as vector space, of the homogeneous part of degree $h$ of the quotient ring $\calS/I$, i.e.,
	$$
		\HF_{\calS/I}(h) = \dim [\calS/I]_{h} = \calS_{h}/I_{h}.
	$$
Given a $0$-dimensional scheme $\bbX$ defined by the ideal $I(\bbX) \subset \calS$, we say {\bf Hilbert function} of $\bbX$ when we refer to the Hilbert function of $\calS/I(\bbX)$.
\end{definition}
In the standard graded case we have $H = \bbN$, while in the bi-graded cases we have $H = \bbN^2$.

\medskip
Therefore, by \eqref{equation: dimension of secants and HF}, we reduced the problem of computing the dimension of secant varieties to the problem of computing the Hilbert function of a special $0$-dimensional scheme.

\begin{question}\label{question: 0-dim P1xP1}
{\it Let $\bbX$ be a union of $(3,2)$-fat points with generic support and generic direction.

\centerline{
For any $(a,b) \in \bbN^2$, what is the Hilbert function of $\bbX$ in bi-degree $(a,b)$?
}}
\end{question}

\smallskip
Also for this question we have an {\it expected} answer. Since $\bbX$ is a $0$-dimensional scheme in $\bbP^1\times\bbP^1$, if we represent the multi-graded Hilbert function of $\bbX$ as an infinite matrix $(\HF_\bbX(a,b))_{a,b\geq 0}$, then it is well-known that, in each row and column, it is strictly increasing until it reaches the degree of $\bbX$ and then it remains constant. Hence, if we let the support of $\bbX$ to be generic, we expect the Hilbert function of $\bbX$ to be the largest possible. Since a $(3,2)$-fat point has degree $5$, if $\bbX$ is a union of $(3,2)$-fat points with generic support, then
$$
	{\rm exp}.\HF_{\bbX}(a,b) = \min\left\{ (a+1)(b+1), 5s \right\}.
$$
As we already explained in \eqref{equation: dimension of secants and HF}, this corresponds to the expected dimension \eqref{equation: expected dimension tangent} of the $s$-th secant variety of the tangential varieties to the Segre-Veronese surfaces $SV_{a,b}$. 

\subsection{\Alessandro{Multiprojective-affine-projective method}}
In \cite{CGG05-SegreVeronese}, the authors defined a very powerful method to study Hilbert functions of $0$-dimensional schemes in multiprojective spaces. The method reduces those computations to the study of the Hilbert function of schemes in standard projective spaces, which might have higher dimensional connected components, depending on the dimensions of the projective spaces defining the multiprojective space. However, in the case of products of $\bbP^1$'s, we still have $0$-dimensional schemes in standard projective space, as we explain in the following.

\medskip
We consider the birational function
\begin{center}
\begin{tabular}{c c c c c c}
		$\phi :$ & $ \bbP^1 \times \bbP^1$ & $\dashrightarrow$ & $\bbA^2$ &  $\rightarrow$ & $\bbP^2$ \\
		& $([s_0:s_1],[t_0,t_1])$ &  $\mapsto$ & $(\frac{s_1}{s_0},\frac{t_1}{t_0})$ & $\mapsto$ & $[1:\frac{s_1}{s_0}:\frac{t_1}{t_0}] = [s_0t_0:s_1t_0:s_0t_1].$
\end{tabular}
\end{center}
\begin{lemma}{\rm \cite[Theorem 1.5]{CGG05-SegreVeronese}}
	Let $\bbX$ be a $0$-dimensional scheme in $\bbP^1\times \bbP^1$ with generic support, i.e., assume that the function $\phi$ is well-defined over $\bbX$. Let $Q_1 = [0:1:0], Q_2 = [0:0:1] \in \bbP^2$. Then,
	$$
		\HF_{\bbX}(a,b) = \HF_{\bbY}(a+b),
	$$
	where $\bbY = \phi(\bbX) + aQ_1 + bQ_2$.
\end{lemma}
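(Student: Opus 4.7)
My plan is to build an explicit linear isomorphism $\Phi:\calS_{a,b}\to V$, where $V := I(aQ_1+bQ_2)_{a+b}\subset \bbC[z_0,z_1,z_2]_{a+b}$, such that $\Phi$ carries $I(\bbX)_{a,b}$ onto $I(\bbY)_{a+b}$. Since $\dim V=(a+1)(b+1)=\dim\calS_{a,b}$, the induced quotient isomorphism $\calS_{a,b}/I(\bbX)_{a,b}\cong V/I(\bbY)_{a+b}$ yields the Hilbert-function equality $\HF_\bbX(a,b)=\HF_\bbY(a+b)$.

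\textbf{The map.} For $f\in\calS_{a,b}$, dehomogenize by $\tilde f(u,v):=f(1,u,1,v)\in\bbC[u,v]$, a polynomial of bidegree at most $(a,b)$, and then rehomogenize:
\[
\Phi(f)(z_0,z_1,z_2)\;:=\;z_0^{a+b}\,\tilde f\bigl(z_1/z_0,\,z_2/z_0\bigr)\in\bbC[z_0,z_1,z_2]_{a+b}.
\]
A monomial-by-monomial expansion shows that $\Phi$ is injective, with image the subspace of $\bbC[z_0,z_1,z_2]_{a+b}$ determined by the two degree constraints $\deg_{z_1}\le a$ and $\deg_{z_2}\le b$. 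Evaluating these constraints in the affine charts $z_1=1$ and $z_2=1$ translates them precisely into the prescribed vanishing orders at $Q_1$ and $Q_2$, and thus identifies the image with $V=I(aQ_1+bQ_2)_{a+b}$.

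\textbf{Compatibility with $\bbX$.} The map $\phi$ restricts to a biregular isomorphism $\phi|_U:U\xrightarrow{\sim}U'$, where $U:=\{x_0y_0\ne0\}\subset\bbP^1\times\bbP^1$ and $U':=\{z_0\ne0\}\subset\bbP^2\setminus\{Q_1,Q_2\}$. By the genericity hypothesis, $\bbX\subset U$, so $\phi|_U$ canonically identifies the local Artinian ring at each point of $\bbX$ with that at the corresponding point of $\phi(\bbX)$, preserving all infinitesimal (non-reduced) data. Combined with the identity $\Phi(f)|_{U'}=z_0^{a+b}\,(\tilde f\circ\phi^{-1})$ and the fact that $z_0$ is a unit on $U'$, this gives the equivalence $f\in I(\bbX)_{a,b}\iff\Phi(f)\in I(\phi(\bbX))_{a+b}$. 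Since $\Phi(f)\in V$ automatically and $\bbY$ is the disjoint union $\phi(\bbX)\sqcup aQ_1\sqcup bQ_2$, we conclude
\[
\Phi\bigl(I(\bbX)_{a,b}\bigr)\;=\;V\cap I(\phi(\bbX))_{a+b}\;=\;I(\bbY)_{a+b}.
\]

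\textbf{Main obstacle.} The technical heart of the proof is the compatibility step above, namely verifying that $\Phi$ respects the full scheme-theoretic structure of $\bbX$ and not merely its support. This would be delicate for a general rational map, but here it is secured by the biregularity of $\phi|_U$: because $\phi|_U$ is an isomorphism of schemes, every local ring-theoretic condition defining $\bbX$ has a unique counterpart defining $\phi(\bbX)$, and these correspondences are exactly those encoded by the polynomial identities underlying $\Phi$. Once this is in place, the stated Hilbert-function identity is a two-line dimension count on the isomorphic quotient spaces.
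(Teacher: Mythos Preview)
The paper does not prove this lemma; it is quoted from \cite{CGG05-SegreVeronese} without argument, so there is no in-paper proof to compare against. Your approach---building an explicit linear isomorphism $\calS_{a,b}\to V$ via dehomogenize/rehomogenize and then verifying that it carries $I(\bbX)_{a,b}$ onto $I(\bbY)_{a+b}$ using the biregularity of $\phi$ on the open chart $\{x_0y_0\neq 0\}\simeq\{z_0\neq 0\}$---is the standard argument and is sound, including your treatment of the scheme-theoretic compatibility (the point you flag as the main obstacle).

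There is one small bookkeeping slip. Your $\Phi$ sends $x_0^{a-i}x_1^iy_0^{b-j}y_1^j$ to $z_0^{a+b-i-j}z_1^iz_2^j$, so its image is, as you say, the span of monomials with $\deg_{z_1}\le a$ and $\deg_{z_2}\le b$. But in the chart $z_1=1$ around $Q_1=[0{:}1{:}0]$ such a monomial has order $(a+b-i-j)+j=a+b-i$ at the origin, so the constraint $\deg_{z_1}\le a$ translates into vanishing to order $\ge b$ at $Q_1$, not order $\ge a$. Hence your image is actually $I(bQ_1+aQ_2)_{a+b}$ rather than $I(aQ_1+bQ_2)_{a+b}$. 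This transposition is harmless for the Hilbert-function conclusion (the two target schemes are exchanged by the symmetry $Q_1\leftrightarrow Q_2$, $a\leftrightarrow b$, and the lemma is applied in the paper only in ways insensitive to this labeling), but the sentence asserting that the degree constraints ``translate precisely into the prescribed vanishing orders at $Q_1$ and $Q_2$'' needs this correction.
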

Therefore, in order to rephrase Question \ref{question: 0-dim P1xP1} as a question about the Hilbert function of $0$-dimensional schemes in standard projective spaces, we need to understand what is the image of a $(3,2)$-fat point of $\bbP^1\times\bbP^1$ by the map $\phi$.

\smallskip
Let $z_0,z_1,z_2$ be the coordinates of $\bbP^2$. Then, the map $\phi$ corresponds to the function of rings
\begin{center}
\begin{tabular}{c c c c}
	$\Phi :$ & $\bbC[z_0,z_1,z_2]$ & $\rightarrow$ & $\bbC[x_0,x_1;y_0,y_1]$; \\
	& $z_0$ & $\mapsto$ & $x_0y_0$, \\	
	& $z_1$ & $\mapsto$ & $x_1y_0$, \\
	& $z_2$ & $\mapsto$ & $x_0y_1$. \\
\end{tabular}
\end{center}
By genericity, we may assume that the $(3,2)$-fat point $J$ has support at $P = ([1:0],[1:0])$ and it is defined by $I(J) = (x_1,y_1)^3 + (x_0y_1+x_1y_0)^2$. By construction, we have that $I(\phi(J)) = \Phi(I(J))$ and it is easy to check that
\begin{equation}\label{eq: (3,2)-points construction}
	\Phi(I(J)) = (z_1,z_2)^3 + (z_1+z_2)^2.
\end{equation}
Therefore, $\phi(J)$ is a $0$-dimensional scheme obtained by the scheme theoretic intersection of a triple point and a double line passing though it. In the literature also these $0$-dimensional schemes are called {\it $(3,2)$-points}; e.g., see \cite{BCGI09}. We call {\bf direction} the line defining the scheme. This motivates our Definition \ref{def: (3,2)-points P1xP1} which is also a slight abuse of the name, but we believe that it will not rise any confusion in the reader since the ambient space will always be clear in the exposition. We consider a generalization of this definition in the following section. By using these constructions, Question \ref{question: 0-dim P1xP1} is rephrased as follows. 

\begin{question}\label{question: 0-dim P2}
{\it Let $\bbY$ be a union of $s$ many $(3,2)$-points with generic support and generic direction in $\bbP^2$. 

\centerline{
For any $a,b$, let $Q_1$ and $Q_2$ be generic points and consider $\bbX = \bbY + aQ_1 + bQ_2$.}

\centerline{
What is the Hilbert function of $\bbX$ in degree $a+b$?
}}
\end{question}

\begin{notation}
	Given a $0$-dimensional scheme $\bbX$ in $\bbP^2$, we denote by $\calL_d(\bbX)$ the linear system of plane curves of degree $d$ having $\bbX$ in the base locus, i.e., the linear system of plane curves whose defining degree $d$ equation is in the ideal of $\bbX$. Similarly, if $\bbX'$ is a $0$-dimensional scheme in $\bbP^1\times\bbP^1$, we denote by $\calL_{a,b}(\bbX')$ the linear system of curves of bi-degree $(a,b)$ on $\bbP^1\times\bbP^1$ having $\bbX'$ in the base locus. Therefore, Question \ref{question: 0-dim P1xP1} and Question \ref{question: 0-dim P2} are equivalent of asking the dimension of these types of linear systems of curves. We define the {\bf virtual dimension} as
	$$
		{\it vir}.\dim\calL_d(\bbX) = {d+2 \choose 2} - \deg(\bbX) \quad \text{ and } \quad 	{\it vir}.\dim\calL_{a,b}(\bbX') = (a+1)(b+1) - \deg(\bbX').
	$$
	Therefore, the {\bf expected dimension} is the maximum between $0$ and the virtual dimension. We say that a $0$-dimensional scheme $\bbX$ in $\bbP^2$ ($\bbX'$ in $\bbP^1\times\bbP^1$) imposes {\bf independent conditions} on $\calO_{\bbP^2}(d)$ (on $\calO_{\bbP^1\times\bbP^1}(a,b)$, respectively) if the dimension of $\calL_{d}(\bbX)$ ($\calL_{a,b}(\bbX')$, respectively) is equal to the virtual dimension.
\end{notation}

\subsection{M\'ethode d'Horace diff\'erentielle}
From now on, we focus on Question \ref{question: 0-dim P2}. We use a degeneration method, known as {\it differential Horace method}, which has been introduced by J. Alexander and A. Hirschowitz, by extending a classical idea which was already present in the work of G. Castelnuovo. They introduced this method in order to completely solve the problem of computing the Hilbert function of a union of $2$-fat points with generic support in $\bbP^n$ \cite{AHb,AHa,AH}.

\begin{definition}
	In the algebra of formal functions $\bbC\llbracket z_1,z_2\rrbracket$, we say that an ideal is {\bf vertically graded} with respect to $z_2$ if it is of the form
	$$
		I = I_0 \oplus I_1z_2 \oplus I_2z_2^2 \oplus \ldots \oplus I_mz_2^m \oplus (z_2^{m+1}),\quad \text{where $I_i$'s are ideals in $\bbC \llbracket z_1\rrbracket$.}
	$$	
	If $\bbX$ is a connected $0$-dimensional scheme in $\bbP^2$ and $C$ is a curve through the support $P$ of $\bbX$, we say that $\bbX$ is {\bf vertically graded} with {\bf base} $C$ if there exist a regular system of parameters $(z_1,z_2)$ at $P$ such that $z_2 = 0$ is the local equation of $C$ and the ideal of $\bbX$ in $\calO_{\bbP^2,P} \simeq \bbC \llbracket z_1,z_2 \rrbracket$ is vertically graded.
\end{definition}

Let $\bbX$ be a vertically graded $0$-dimensional scheme in $\bbP^2$ with base $C$ and let $j\geq 1$ be a fixed integer; then, we define:
\begin{align*}
	j{\bf -th ~Residue}: & \quad {\rm Res}_C^j(\bbX), \text{ the scheme in $\bbP^2$ defined by } \calI_{\bbX} + (\calI_\bbX : \calI^{j}_C) \calI_C^{j-1}. \\
	j{\bf -th ~Trace}: & \quad {\rm Tr}_C^j(\bbX), \text{ the subscheme of $C$ defined by }  (\calI_\bbX : \calI^{j-1}_C) \otimes \calO_{C}.
\end{align*}
Roughly speaking, we have that, in the $j$-th residue, we remove the $j$-th {\it slice} of the scheme $\bbX$; while, in the $j$-th trace, we consider only the $j$-th {\it slice} as a subscheme of the curve $C$.

In the following example, we can see how we see as vertically graded schemes the $(3,2)$-fat points we have introduced before.

\begin{example}\label{example: (3,2)-point}
	Up to a linear change of coordinates, we may assume that the scheme $J$ constructed in \ref{eq: (3,2)-points construction} is defined by the ideal
	$
		I(J) = (z_1,z_2)^3 + (z_1)^2 = (z_1^2,z_1z_2^2,z_2^3).
	$
	Therefore, we have that, in the local system of parameters $(z_1,z_2)$, the scheme $J$ is vertically graded with respect to the $z_2$-axis defined by $\{z_1 = 0\}$; indeed, we have the two vertical layers given by
	$$
		I(X) = I_0 \oplus I_1z_1 \oplus (z_1)^2, \text{ where $I_0 = (z_2^3)$ and $I_1=(z_2^2)$};
	$$
	at the same time, we have that it is also vertically graded with respect to the $z_1$-axis defined by $\{z_2 = 0\}$; indeed, we have the three horizontal layers given by
	$$
		I(J) = I_0 \oplus I_1z_2 \oplus I_2z_2^2 \oplus (z_2)^3, \text{ where $I_0 = I_1 = (z_1^2)$ and $I_2=(z_1)$}.
	$$
	We can visualize $J$ as in Figure \ref{figure: (3,2)-point}, where the black dots correspond to the generators of the $5$-dimensional vector space $\bbC\llbracket z_1,z_2 \rrbracket / I(J) = \left\langle 1, z_1, z_2, z_1^2, z_1z_2 \right\rangle$.
\begin{figure}[h]
\begin{center}
\begin{tikzpicture}[line cap=round,line join=round,x=1.0cm,y=1.0cm,scale = 0.7]
\clip(-2.,-1.) rectangle (3.,3.);
\draw [line width=1.pt,dash pattern=on 1pt off 2pt] (0.,-1.) -- (0.,3.);
\draw [color = black] (-0.5,2.7) node {$z_2$};
\draw [line width=1.pt,dash pattern=on 1pt off 2pt,domain=-2.:3.] plot(\x,{(-0.-0.*\x)/-1.});
\draw [color = black] (2.5,-0.5) node {$z_1$};
\begin{scriptsize}
\draw [fill=black] (1.,0.) circle (3.5pt);
\draw [fill=black] (0.,1.) circle (3.5pt);
\draw [fill=black] (1.,1.) circle (3.5pt);
\draw [fill=black] (0.,2.) circle (3.5pt);
\draw[color=black] (0.22,8.17) node {$f$};
\draw[color=black] (-10.12,0.33) node {$g$};
\draw [fill=black] (0.,0.) circle (3.5pt);
\end{scriptsize}
\end{tikzpicture}
\end{center}
\caption{A representation of the $(3,2)$-point $J$, defined by the ideal $I(J) = (z_1,z_2)^3 + (z_1)^2$, as a vertically graded scheme. }
\label{figure: (3,2)-point}
\end{figure}
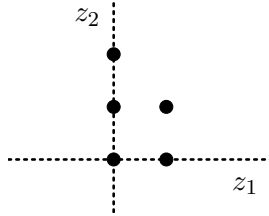

Therefore, if we consider $J$ as vertically graded scheme with base the $z_2$-axis, we compute the $j$-th residue and trace, for $j = 1,2$, as follows:

\begin{minipage}{0.6\textwidth}
\begin{align*}
	{\rm Res}^1_{z_2}(X) : & \quad (z_1^2,z_1z_2^2,z_2^3) + (z_1^2,z_1z_2^2,z_2^3) : (z_1) = (z_1,z_2^2); & \\
	{\rm Tr}^1_{z_2}(X) : & \quad (z_1^2,z_1z_2^2,z_2^3) \otimes \bbC\llbracket z_1,z_2 \rrbracket / (z_1) = (z_2^3); \\
	~ \\
	{\rm Res}^2_{z_2}(X) : & \quad (z_1^2,z_1z_2^2,z_2^3) + ((z_1^2,z_1z_2^2,z_2^3) : (z_1^2))\cdot (z_1) = (z_1,z_2^3); & \\
	{\rm Tr}^2_{z_2}(X) : & \quad \left((z_1^2,z_1z_2^2,z_2^3) : (z_1)\right)\otimes \bbC\llbracket z_1,z_2 \rrbracket / (z_1) = (z_2^2).
\end{align*}
\end{minipage}
\begin{minipage}{0.35\textwidth}
\begin{tikzpicture}[line cap=round,line join=round,x=1.0cm,y=1.0cm,scale = 0.6]
\clip(-0.5,-0.5) rectangle (7.,5.5);
\draw [line width=1.pt,dash pattern=on 1pt off 2pt] (1.,0.5) -- (1.,5.5);
\draw [line width=1.pt,dash pattern=on 1pt off 2pt] (5.,0.5) -- (5.,5.5);
\begin{scriptsize}
\draw [fill=black] (1.,4.) circle (4.0pt);
\draw [fill=black] (1.,3.) circle (4.0pt);
\draw [fill=black] (1.,2.) circle (4.0pt);
\draw [color=black] (2.,3.) circle (3.5pt);
\draw [color=black] (2.,2.) circle (3.5pt);
\draw [fill=black] (5.,3.) circle (4.0pt);
\draw [fill=black] (5.,2.) circle (4.0pt);
\draw [color=black] (5.98,2.99) circle (3.5pt);
\draw [color=black] (5.98,1.99) circle (3.5pt);
\draw[color=black] (5.166432702358458,6.933351817483886) node {$g_1$};
\draw [color=black] (6.,4.) circle (3.5pt);
\end{scriptsize}
\node[draw] at (1.5,0.05) {$j = 1$};
\node[draw] at (5.5,0.05) {$j = 2$};
\end{tikzpicture}
\end{minipage}

\bigskip
Similarly, if we consider it as vertically graded with respect to the $z_1$-axis, we compute the $j$-th residue and trace, for $j = 1,2,3$, as follows:

\begin{minipage}{0.55\textwidth}
\begin{align*}
	{\rm Res}^1_{z_1}(X) : & \quad (z_1^2,z_1z_2^2,z_2^3) + (z_1^2,z_1z_2^2,z_2^3) : (z_2) = (z_1^2,z_1z_2,z_2^2); & \\
	{\rm Tr}^1_{z_1}(X) : & \quad (z_1^2,z_1z_2^2,z_2^3) \otimes \bbC\llbracket z_1,z_2 \rrbracket / (z_2) = (z_1^2); \\
	~ \\
	{\rm Res}^2_{z_1}(X) : & \quad (z_1^2,z_1z_2^2,z_2^3) + ((z_1^2,z_1z_2^2,z_2^3) : (z_2^2))\cdot (z_2) = (z_1^2,z_1z_2,z_2^2); & \\
	{\rm Tr}^2_{z_1}(X) : & \quad \left((z_1^2,z_1z_2^2,z_2^3) : (z_2)\right)\otimes \bbC\llbracket z_1,z_2 \rrbracket / (z_2) = (z_1^2); \\
	~ \\
	{\rm Res}^3_{z_1}(X) : & \quad (z_1^2,z_1z_2^2,z_2^3) + ((z_1^2,z_1z_2^2,z_2^3): (z_2^3))\cdot (z_2^2) = (z_1^2,z_2^2); & \\
	{\rm Tr}^3_{z_1}(X) : & \quad \left((z_1^2,z_1z_2^2,z_2^3): (z_2^2)\right)\otimes \bbC\llbracket z_1,z_2 \rrbracket / (z_2) = (z_1).
\end{align*}
\end{minipage}
\begin{minipage}{0.4\textwidth}
\begin{tikzpicture}[line cap=round,line join=round,x=1.0cm,y=1.0cm,scale = 0.6]
\clip(-2.,-1.5) rectangle (4.,10.);
\draw [line width=1.pt,dash pattern=on 1pt off 2pt,domain=-2.:4.] plot(\x,{(--1.0207472463132268-0.*\x)/1.});
\draw [line width=1.pt,dash pattern=on 1pt off 2pt,domain=-2.:4.] plot(\x,{(--5.964822902537174-0.*\x)/1.});
\begin{scriptsize}
\draw [color=black] (0.15798866047282512,3.020747246313228) circle (3.5pt);
\draw [color=black] (0.15798866047282512,2.020747246313227) circle (3.5pt);
\draw [fill=black] (0.15798866047282512,1.020747246313226) circle (4.0pt);
\draw [color=black] (1.1579886604728258,2.020747246313227) circle (3.5pt);
\draw[color=black] (-6.874345903049084,1.514396450654374) node {$f_1$};
\draw [color=black] (0.14984696094040872,7.964822902537176) circle (3.5pt);
\draw [color=black] (0.14984696094040872,6.964822902537174) circle (3.5pt);
\draw [fill=black] (0.14984696094040872,5.964822902537174) circle (4.0pt);
\draw [color=black] (1.1498469609404087,6.964822902537174) circle (3.5pt);
\draw [fill=black] (1.1498469609404087,5.964822902537174) circle (4.0pt);
\draw[color=black] (-6.874345903049084,5.859958317926224) node {$f_2$};
\draw [color=black] (1.1549537540078336,3.01286606005846) circle (3.5pt);
\end{scriptsize}
\node[draw] at (0.5,4.8) {$j = 1,2$};
\node[draw] at (0.4,0.1) {$j = 3$};
\end{tikzpicture}
\end{minipage}
\end{example}
\begin{notation}
	Let $\bbX = X_1 + \ldots+X_s$ be a union of vertically graded schemes with respect to the same base $C$. Then, for any vector $\bfj = (j_1,\ldots,j_s) \in \bbN_{\geq 1}^s$, we denote
	$$
		{\rm Res}^\bfj_C(\bbX) := {\rm Res}^{\bfj_1}_C(\bbX_1) \cap \ldots \cap {\rm Res}^{\bfj_s}_C(\bbX_s), \quad \quad \text{ and } \quad \quad {\rm Tr}^\bfj_C(\bbX) := {\rm Tr}^{\bfj_1}_C(\bbX_1) \cap \ldots \cap {\rm Tr}^{\bfj_s}_C(\bbX_s).
	$$ 
\end{notation}

We are now ready to describe the Horace differential method.  

\begin{proposition}[Horace differential lemma, \mbox{\cite[Proposition 9.1]{AH}}]\label{proposition: Horace}
	Let $\bbX$ be a $0$-dimensional scheme and let $L$ be a line. Let $Y_1,\ldots,Y_s,\widetilde{Y}_1,\ldots,\widetilde{Y_s}$ be $0$-dimensional connected schemes such that $Y_i \simeq \widetilde{Y_i}$, for any $i = 1,\ldots,s$; $\widetilde{Y_i}$ has support on the line $L$ and is vertically graded with base $L$; the support of $\bbY = \bigcup_{i=1}^t Y_i$ and of $\widetilde{\bbY} = \bigcup_{i=1}^t \widetilde{Y_i}$ are generic in the corresponding Hilbert schemes. 
	
	Let $\bfj = (j_1,\ldots,j_s)\in \bbN_{\geq 1}^s$ and $d \in \bbN$.
	
	\begin{enumerate}
	\item If:
	\begin{enumerate}
		\item ${\rm Tr}^1_L(\bbX) + {\rm Tr}^\bfj_L(\widetilde{\bbY})$ imposes independent conditions on $\calO_{\ell}(d)$;
		\item ${\rm Res}^1_L(\bbX) + {\rm Res}^\bfj_L(\widetilde{\bbY})$ imposes independent conditions on $\calO_{\bbP^2}(d-1)$;
	\end{enumerate}
	then, $\bbX + \bbY$ imposes independent conditions on $\calO_{\bbP^2}(d)$.
	\item If:
	\begin{enumerate}
		\item $\calL_{d,\bbP^1}\left({\rm Tr}^1_\ell(\bbX) + {\rm Tr}^\bfj_\ell(\widetilde{\bbY})\right)$ is empty;
		\item $\calL_{d-1,\bbP^2}\left({\rm Res}^1_\ell(\bbX) + {\rm Res}^\bfj_\ell(\widetilde{\bbY})\right)$ is empty;
	\end{enumerate}
	then $\calL_d(\bbX + \bbY)$ is empty.
	\end{enumerate}
\end{proposition}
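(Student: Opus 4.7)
The plan is to combine upper semicontinuity of cohomology along a carefully constructed flat family with an iterated application of the Castelnuovo exact sequence, using the vertically graded structure of $\widetilde{\bbY}$ so that, at each iteration, one ``slice'' of each $\widetilde{Y}_i$ is peeled off as a trace while the remainder continues as the new residue.

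First, I would construct a flat family $\{\calZ_t\}_{t \in \bbA^1}$ of $0$-dimensional subschemes of $\bbP^2$ with $\calZ_t = \bbX + \bbY^{(t)}$ for $t\neq 0$ and $\calZ_0 = \bbX + \widetilde{\bbY}$, where $\bbY^{(t)}$ is a generic deformation of $\widetilde{\bbY}$ (and hence isomorphic to $\bbY$ with generic support for $t$ generic). The construction uses the local coordinates $(z_1,z_2)$ around the support of each $\widetilde{Y}_i$ in which that scheme is vertically graded with $L=\{z_2=0\}$: a suitable $t$-rescaling in the $z_2$ direction produces a family whose flat limit is exactly $\widetilde{Y}_i$, with flatness guaranteed by the layered shape of the ideal. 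Once this family is built, upper semicontinuity of $h^0(\calI_{\calZ_t}(d))$ and $h^1(\calI_{\calZ_t}(d))$ in $t$ reduces each part of the lemma to the central fiber: for part (1) it suffices to prove $h^1(\calI_{\bbX+\widetilde{\bbY}}(d))=0$, and for part (2) it suffices to prove $h^0(\calI_{\bbX+\widetilde{\bbY}}(d))=0$.

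Next, I would apply the Castelnuovo exact sequence with respect to $L$,
$$
 0 \longrightarrow \calI_{\mathrm{Res}^1_L(\calZ_0)}(d-1) \longrightarrow \calI_{\calZ_0}(d) \longrightarrow \calI_{\mathrm{Tr}^1_L(\calZ_0)\,|\,L}(d) \longrightarrow 0,
$$
and read off the long exact sequence in cohomology. Because each $\widetilde{Y}_i$ is vertically graded with base $L$, the scheme $\mathrm{Res}^1_L(\widetilde{Y}_i)$ is itself supported on $L$ and again vertically graded, with its bottom slice removed. Iterating the Castelnuovo sequence on the residual scheme, after $j_i$ steps the accumulated residue on the $i$-th component is precisely $\mathrm{Res}^{j_i}_L(\widetilde{Y}_i)$ and the final trace contribution is $\mathrm{Tr}^{j_i}_L(\widetilde{Y}_i)$; this identification follows directly from the ideal-quotient formulas $(\calI_{\widetilde{Y}_i}:\calI_L^{\,j})\cdot \calI_L^{\,j-1}$ that define the $j$-th residue and trace. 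Assembling traces and residues over all $i$ and combining with the (first) trace and residue of $\bbX$, the vanishing hypotheses (a) and (b) assemble into exactly the cohomology vanishings needed to kill $h^1$ (resp.\ $h^0$) of $\calI_{\calZ_0}(d)$, and thus the desired conclusion follows.

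The main obstacle is twofold. First, one must verify that the $t$-rescaling of the vertically graded schemes $\widetilde{Y}_i$ does give a flat family whose generic fiber is of the right type; this is the geometric core of the differential Horace method and relies essentially on the layered shape of $\calI_{\widetilde{Y}_i}$. Second, one must check that the iterated trace/residue bookkeeping really produces the $\mathrm{Tr}^{j_i}_L$ and $\mathrm{Res}^{j_i}_L$ of the statement at the end of the iteration, which is a purely algebraic identification using the filtered structure of $\calI_{\widetilde{Y}_i}$ and the commutation of taking ideal quotients with direct summands of a vertically graded ideal.
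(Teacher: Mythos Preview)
The paper does not give a proof of this proposition; it is quoted verbatim from \cite[Proposition~9.1]{AH} and used as a black box throughout. So there is no argument in the paper to compare your proposal against.

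That said, your sketch has a genuine gap in the second step. The reduction by semicontinuity to a special fiber is the right opening move, but the plan to ``iterate the Castelnuovo sequence on the residual scheme, after $j_i$ steps'' cannot recover the statement. Each application of the residue sequence drops the twisting degree by one: after $k$ iterations the residue sits in $\calO_{\bbP^2}(d-k)$ and the trace produced at that stage lies in $\calO_L(d-k+1)$. To control $h^0$ or $h^1$ along such an iteration you would need vanishing hypotheses at every intermediate degree $d, d-1, \ldots$, whereas the proposition supplies exactly two hypotheses, one trace condition in degree $d$ and one residue condition in degree $d-1$. Moreover, different components carry different $j_i$, so there is no single number of iterations that works simultaneously. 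Simply specializing $\bbY$ to $\widetilde{\bbY}$ on $L$ and applying the ordinary Castelnuovo sequence once gives only the case $j_i=1$ for all $i$, i.e.\ the classical (non-differential) Horace lemma.

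What makes the lemma ``differential'' in Alexander--Hirschowitz is a more delicate degeneration than the one you describe. One does not merely slide $Y_i$ onto $L$; rather one constructs, for each $i$, a one-parameter family calibrated by $j_i$ (heuristically, a translation in the transverse coordinate of order depending on $j_i$ together with a rescaling) whose flat limit $Z_0$ satisfies ${\rm Tr}^1_L(Z_0)={\rm Tr}^1_L(\bbX)+{\rm Tr}^{\bfj}_L(\widetilde{\bbY})$ and ${\rm Res}^1_L(Z_0)={\rm Res}^1_L(\bbX)+{\rm Res}^{\bfj}_L(\widetilde{\bbY})$. A \emph{single} application of the residue sequence to $Z_0$ then uses hypotheses (a) and (b) directly. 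The existence of this flat limit with the prescribed first trace and first residue is exactly where the vertically graded hypothesis is used, and it is the step your outline is missing.
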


The latter result contains all our strategy. Given a $0$-dimensional scheme as in Question \ref{question: 0-dim P2} with generic support, we specialize some of the $(3,2)$-points to have support on a line in such a way the arithmetic allows us to use the conditions of Proposition \ref{proposition: Horace}. Such a specialization will be done in one of the different ways explained in Example \ref{example: (3,2)-point}. Recall that, if a specialized scheme has the expected dimension, then, by semicontinuity of the Hilbert function, also the original general scheme has the expected dimension.

In particular, the residues of $(3,2)$-points have very particular structures. For this reason, we introduce the following definitions.

\begin{definition}
	We call {\bf $m$-jet} with {\bf support} at $P$ in the {\bf direction} $L$ the $0$-dimensional scheme defined by the ideal $(\ell,\ell_1^m)$ where $\ell$ is a linear form defining the line $L$ and $(\ell,\ell_1)$ defines the point $P$.
	
	We call {\bf $(m_1,m_2)$-jet} with {\bf support} at $P$ in the {\bf directions} $L_1,L_2$ the $0$-dimensional scheme defined by the ideal $(\ell_1^{m_1},\ell_2^{m_2})$ where $\ell_i$ is a linear form defining the line $L_i$, for $i =1,2$, and $P = L_1 \cap L_2$.
\end{definition}

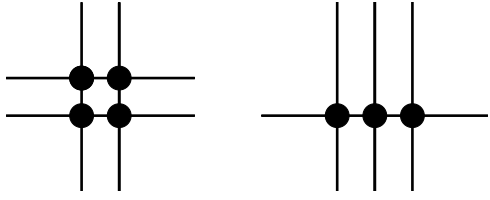
\begin{figure}[h]
\begin{center}
\begin{tikzpicture}[line cap=round,line join=round,x=1.0cm,y=1.0cm]
\clip(5.,5.) rectangle (7.5,7.5);
\draw [line width=1.pt] (1.,5.) -- (1.,7.5);
\draw [line width=1.pt,domain=5.:7.5] plot(\x,{(--6.-0.*\x)/1.});
\draw [line width=1.pt] (1.,5.) -- (1.,7.5);
\draw [line width=1.pt] (6.,5.) -- (6.,7.5);
\draw [line width=1.pt] (6.5,5.) -- (6.5,7.5);
\draw [line width=1.pt] (6.5,5.) -- (6.5,7.5);
\draw [line width=1.pt,domain=5.:7.5] plot(\x,{(--3.25-0.*\x)/0.5});
\begin{scriptsize}
\draw [fill=black] (1.,7.) circle (4.5pt);
\draw [fill=black] (1.,6.) circle (4.5pt);
\draw [fill=black] (6.,6.5) circle (4.5pt);
\draw [fill=black] (2.,6.) circle (4.5pt);
\draw [fill=black] (6.,6.) circle (4.5pt);
\draw [fill=black] (6.5,6.5) circle (4.5pt);
\draw [fill=black] (6.5,6.) circle (4.5pt);
\draw[color=black] (1.0995805946910735,9.785609888363922) node {$f$};
\draw[color=black] (1.0116847258085508,6.162346848873283) node {$i$};
\draw[color=black] (1.0995805946910735,9.785609888363922) node {$j$};
\draw[color=black] (6.099878913341249,9.785609888363922) node {$k$};
\draw[color=black] (6.597955503675544,9.785609888363922) node {$l$};
\draw[color=black] (6.597955503675544,9.785609888363922) node {$h$};
\draw [fill=black] (6.,6.5) circle (4.5pt);
\draw[color=black] (1.0116847258085508,6.435800663174463) node {$g$};
\end{scriptsize}
\end{tikzpicture}
\quad\quad
\begin{tikzpicture}[line cap=round,line join=round,x=1.0cm,y=1.0cm]
\clip(5.,5.) rectangle (8.,7.5);
\draw [line width=1.pt] (1.,5.) -- (1.,8.);
\draw [line width=1.pt] (1.97546,5.) -- (1.97546,8.);
\draw [line width=1.pt,domain=5.:8.] plot(\x,{(--6.-0.*\x)/1.});
\draw [line width=1.pt] (1.,5.) -- (1.,8.);
\draw [line width=1.pt] (6.,5.) -- (6.,8.);
\draw [line width=1.pt] (6.5,5.) -- (6.5,8.);
\draw [line width=1.pt] (7.,5.) -- (7.,8.);
\draw [line width=1.pt] (6.5,5.) -- (6.5,8.);
\begin{scriptsize}
\draw [fill=black] (1.,7.) circle (4.5pt);
\draw [fill=black] (1.,6.) circle (4.5pt);
\draw [fill=black] (2.00409,7.) circle (4.5pt);
\draw [fill=black] (2.,6.) circle (4.5pt);
\draw [fill=black] (6.,6.) circle (4.5pt);
\draw [fill=black] (7.,6.) circle (4.5pt);
\draw [fill=black] (6.5,6.) circle (4.5pt);
\draw[color=black] (1.0995805946910735,9.785609888363922) node {$f$};
\draw[color=black] (2.1152661906668904,9.785609888363922) node {$g$};
\draw[color=black] (1.0116847258085508,6.162346848873283) node {$i$};
\draw[color=black] (1.0995805946910735,9.785609888363922) node {$j$};
\draw[color=black] (6.099878913341249,9.785609888363922) node {$k$};
\draw[color=black] (6.597955503675544,9.785609888363922) node {$l$};
\draw[color=black] (6.871409317976725,9.785609888363922) node {$m$};
\draw[color=black] (6.597955503675544,9.785609888363922) node {$h$};
\end{scriptsize}
\end{tikzpicture}
\end{center}
\caption{The representation of a $(2,2)$-jet and a $3$-jet as vertically graded schemes.}
\end{figure}

Since $(2,2)$-points will be crucial in our computations, we analyse further their structure with the following two lemmas that are represented also in Figure \ref{fig: (3,2)-point structure}.

\begin{lemma}\label{lemma: degeneration}
	For $\lambda > 0$, let $L_1(\lambda)$ and $L_2(\lambda)$ two families of lines, defined by $\ell_1(\lambda)$ and $\ell_2(\lambda)$, respectively, passing through a unique point. Assume also that $m = \lim_{\lambda \rightarrow 0} \ell_i(\lambda)$, for $i = 1,2$, i.e., the families $L_i(\lambda)$ degenerate to the line $M = \{m=0\}$ when $\lambda$ runs to $0$. Fix a generic line $N = \{n = 0\}$ and consider
	$$
		J_1(\lambda) = (\ell_1(\lambda),n^2) \quad \text{ and } \quad J_2(\lambda) = (\ell_2(\lambda),n^2).
	$$
	Then, the limit for $\lambda \rightarrow 0$ of the scheme $\bbX(\lambda) = J_1(\lambda) + J_2(\lambda)$ is the $(2,2)$-jet defined by $(m^2,n^2)$.
\end{lemma}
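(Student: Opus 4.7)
The plan is to reduce to local coordinates at the limit point $P_{0} = M \cap N$, write the $\ell_{i}(\lambda)$ explicitly, and identify the ideal of $\bbX(\lambda)$ for $\lambda \neq 0$ with a complete intersection that extends flatly across $\lambda = 0$.

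First, since $N$ is generic, the common point $C$ of the pencil does not lie on $N$; on the other hand, $C$ must lie on the limit line $M$. I therefore choose affine coordinates $(z_{1}, z_{2})$ centered at $P_{0}$ with $n = z_{1}$, $m = z_{2}$, and $C = (c, 0)$ for some $c \neq 0$. Any two lines through $C$ degenerating to $M = \{z_{2} = 0\}$ can then be written as
\[
    \ell_{i}(\lambda) \;=\; z_{2} - \lambda\,\mu_{i}\,(z_{1} - c), \qquad i=1,2,
\]
with $\mu_{1} \neq \mu_{2}$, so $L_{i}(\lambda) \cap N = (0, \lambda \mu_{i} c) \to P_{0}$ as $\lambda \to 0$.

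The key step is to verify that, for every $\lambda \neq 0$ in a neighborhood of $0$, the ideal
\[
	I(\lambda) \;:=\; (n^{2},\, \ell_{1}(\lambda)\,\ell_{2}(\lambda))
\]
coincides with $J_{1}(\lambda) \cap J_{2}(\lambda)$. The inclusion $I(\lambda) \subseteq J_{1}(\lambda) \cap J_{2}(\lambda)$ is immediate, and equality follows from a length count: the complete intersection $I(\lambda)$ has length $2 \cdot 2 = 4$ (its two generators share no common factor), while $J_{1}(\lambda) \cap J_{2}(\lambda)$ also has length $4$ since it is the union of the two disjoint $2$-jets $J_{1}(\lambda)$ and $J_{2}(\lambda)$.

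The family $\{I(\lambda)\}_{\lambda\in\bbA^{1}}$ is then manifestly flat -- it is a family of complete intersections of constant length $4$ -- so the scheme-theoretic limit of $\bbX(\lambda)$ at $\lambda = 0$ is obtained by substitution: since $\ell_{1}(0)\,\ell_{2}(0) = z_{2}^{2} = m^{2}$, the limit is the $0$-dimensional scheme cut out by $(n^{2}, m^{2})$, which is precisely the $(2,2)$-jet with support $P_{0}$ and directions $M$ and $N$. The main technical point I expect is the uniform equality $J_{1}(\lambda) \cap J_{2}(\lambda) = I(\lambda)$, and specifically that $I(\lambda)$ really is a complete intersection of length $4$ for every $\lambda$ near $0$; this uses both $\mu_{1} \neq \mu_{2}$ (so that the two lines are distinct) and the genericity $c \neq 0$ (so that $N$ avoids $C$), which together guarantee that $z_{1}^{2}$ and $\ell_{1}(\lambda)\ell_{2}(\lambda)$ share no common factor in a neighborhood of $\lambda = 0$.
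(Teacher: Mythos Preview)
Your proof is correct and follows essentially the same route as the paper's: pass to explicit coordinates, write the ideal of $J_1(\lambda)\cap J_2(\lambda)$ as the complete intersection $(n^2,\ell_1(\lambda)\ell_2(\lambda))$, and read off the limit by setting $\lambda=0$. The paper simply takes the symmetric parametrization $\ell_1=z_1+\lambda z_0$, $\ell_2=z_1-\lambda z_0$ and computes $(z_2^2,z_1+\lambda z_0)\cap(z_2^2,z_1-\lambda z_0)=(z_2^2,z_1^2-\lambda^2 z_0^2)\to(z_1^2,z_2^2)$ without further comment; your version is a bit more careful in justifying via the length count why $I(\lambda)=J_1(\lambda)\cap J_2(\lambda)$ and why the limit of generators really computes the flat limit, but the underlying argument is the same.
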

\begin{proof}
	We may assume that	
	$$
		I(J_1(\lambda)) = (z_1+\lambda z_0,z^2_2) \quad \text{and} \quad 
		I(J_2(\lambda)) = (z_1-\lambda z_0,z^2_2).
	$$
	Hence, $M$ is the line $\{z_1 = 0\}$. Then, the limit for $\lambda \rightarrow 0$ of the scheme $\bbX(\lambda)$ is given by
	$$
		\lim_{\lambda \rightarrow 0} I(\bbX(\lambda)) = \lim_{\lambda \rightarrow 0} \left[(z^2_2,z_1+\lambda z_0)  \cap (z^2_2,z_1-\lambda z_0)\right] = \lim_{\lambda \rightarrow 0} (z_2^2,z_1^2 - \lambda^2z_0^2) = (z_1^2,z_2^2).
	$$
\end{proof}
\begin{lemma}\label{lemma: residue}
	Let $J$ be a $(2,2)$-jet, defined by the ideal $(\ell_1^2,\ell_2^2)$, with support at the point $P$. Let $L_i = \{\ell_i=0\}$, for $i = 1,2$. Then:
	\begin{enumerate}
		\item the residue of $J$ with respect to $L_1$ ($L_2$, respectively) is a $2$-jet with support at $P$ and direction $L_1$ ($L_2$, respectively);
		\item the residue of $J$ with respect to a line $L = \{\alpha \ell_1 + \beta \ell_2 = 0\}$ passing through $P$ different from $L_1$ and $L_2$ is a $2$-jet with support at $P$ and direction the line $\{\alpha \ell_1 - \beta \ell_2 = 0\}$.
	\end{enumerate}
\end{lemma}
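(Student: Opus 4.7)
The plan is to interpret the residue of $J$ with respect to a line $L$ as the colon ideal $I(J) : I(L)$ (this matches the $j=1$ case of the definition given earlier, since $I(J) \subseteq I(J) : I(L)$) and then do an explicit computation in the local coordinates $\ell_1,\ell_2$ at $P$.

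For part (1), by the symmetry of the $(2,2)$-jet it suffices to handle $L_1 = \{\ell_1=0\}$. The task reduces to checking that $(\ell_1^2,\ell_2^2):(\ell_1) = (\ell_1,\ell_2^2)$. The inclusion $\supseteq$ is immediate from $\ell_1 \cdot \ell_2^2 \in (\ell_1^2,\ell_2^2)$. For $\subseteq$, if $\ell_1 f = g_1 \ell_1^2 + g_2 \ell_2^2$, then $\ell_1 \mid g_2 \ell_2^2$; since $\ell_1$ and $\ell_2$ define distinct lines through $P$ they are coprime, so $\ell_1 \mid g_2$, and writing $g_2 = \ell_1 h$ yields $f = g_1 \ell_1 + h\,\ell_2^2 \in (\ell_1,\ell_2^2)$. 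By the definition of jet, this is the ideal of a $2$-jet at $P$ with direction $L_1$.

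For part (2), the key move is a symmetric change of variables. Put
\[
u = \alpha \ell_1 + \beta \ell_2, \qquad v = \alpha \ell_1 - \beta \ell_2.
\]
Because $L \neq L_1, L_2$, both $\alpha$ and $\beta$ are nonzero, so $u,v$ form another system of local parameters at $P$. Rewriting $(2\alpha)^2 \ell_1^2 = (u+v)^2$ and $(2\beta)^2 \ell_2^2 = (u-v)^2$ and taking sums and differences gives $uv \in I(J)$ and $u^2 + v^2 \in I(J)$; conversely each original generator lies in $(uv,\, u^2+v^2)$. Hence $I(J) = (uv,\, u^2+v^2)$, a presentation in which the symmetry between the direction of $L$ (namely $u$) and the expected direction of the residue (namely $v$) becomes manifest.

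It then remains to compute $(uv,\, u^2+v^2):(u)$. The inclusion $\supseteq (v, u^2)$ holds since $uv \in I(J)$ and $u \cdot u^2 = u(u^2+v^2) - v(uv) \in I(J)$. For the reverse inclusion, if $uf = g_1 uv + g_2(u^2+v^2)$ then $u \mid g_2 v^2$ and so $u \mid g_2$; writing $g_2 = uh$ and simplifying gives $f = g_1 v + h(u^2+v^2) \in (v,\, u^2+v^2) = (v, u^2)$. Returning to the original coordinates, this ideal reads $(\alpha\ell_1-\beta\ell_2,\, (\alpha\ell_1+\beta\ell_2)^2)$, which is exactly the $2$-jet at $P$ with direction $\{\alpha\ell_1-\beta\ell_2 = 0\}$. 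I expect no substantive obstacle: the entire argument is a short colon-ideal manipulation, and the only conceptual ingredient is the symmetric change of variables in part~(2).
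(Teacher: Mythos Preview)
Your proposal is correct and follows the same approach as the paper: both identify the residue with the colon ideal and compute it directly. The paper simply asserts $(\ell_1^2,\ell_2^2):(\ell_1)=(\ell_1,\ell_2^2)$ and $(\ell_1^2,\ell_2^2):(\alpha\ell_1+\beta\ell_2)=(\ell_1^2,\alpha\ell_1-\beta\ell_2)$ without further justification, whereas you supply the verification; your symmetric change of variables $u=\alpha\ell_1+\beta\ell_2$, $v=\alpha\ell_1-\beta\ell_2$ in part~(2) is a clean way to see this, and your final ideal $(\alpha\ell_1-\beta\ell_2,(\alpha\ell_1+\beta\ell_2)^2)$ coincides with the paper's $(\ell_1^2,\alpha\ell_1-\beta\ell_2)$ once one reduces modulo $\alpha\ell_1-\beta\ell_2$.
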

\begin{proof}
	\begin{enumerate}
	\item If we consider the residue with respect to $\{\ell_1=0\}$, we get
	$$
		(\ell_1^2,\ell_2^2) : (\ell_1) = (\ell_1,\ell_2^2).
	$$
	Analogously, for the line $\{\ell_2 = 0\}$.
	\item If we consider the residue with respect to the line $\{\alpha \ell_1 + \beta\ell_2 = 0\}$, we get
	$$
		(\ell_1^2,\ell_2^2) : (\alpha \ell_1 + \beta \ell_2) = (\ell_1^2, \alpha\ell_1 - \beta \ell_2).
	$$
	\end{enumerate}
\end{proof}
\begin{figure}[h]
\begin{subfigure}[t]{0.5\textwidth}
        \centering
        \includegraphics[height=1.6in]{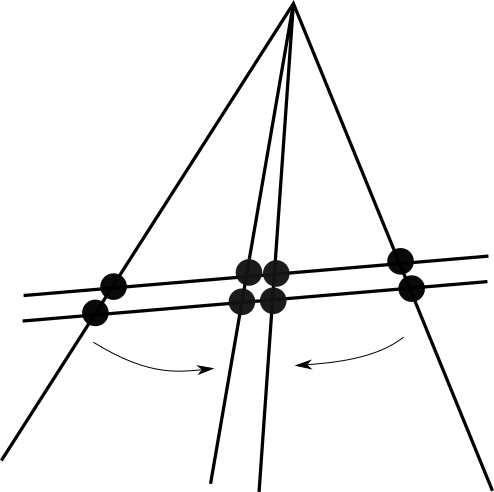}
        \caption{The degeneration of Lemma \ref{lemma: degeneration}.}
    \end{subfigure}%
    ~ 
    \begin{subfigure}[t]{0.5\textwidth}
        \centering
        \includegraphics[height=1.3in]{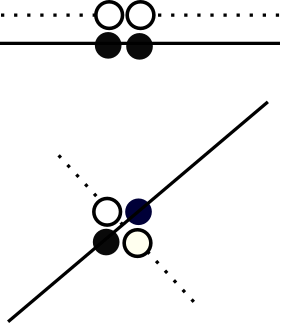}
        \caption{The two different residues of Lemma \ref{lemma: residue}.}
    \end{subfigure}
    \caption{The structure of a $(2,2)$-jet.}
    \label{fig: (3,2)-point structure}
\end{figure}

The construction of $(2,2)$-jets as degeneration of $2$-jets, as far as we know, is a type of degeneration that has not been used before in the literature. Similarly as regard the fact that the structure of the residues of $(2,2)$-jets depend on the direction of the lines. These two facts will be crucial for our computation and we believe that these constructions might be used to attack also other similar problems on linear systems. 

\section{Lemmata}\label{sec: lemmata}

\subsection{Subabundance and superabundance}\label{sec: super- and sub-abundance}
The following result is well-known for the experts in the area and can be found
in several papers in the literature. We explicitly recall it for convenience of the reader.
\begin{lemma}\label{lemma: super- and sub-abundance}
Let $\bbX' \subset \bbX \subset \bbX''\subset\bbP^2$ be $0$-dimensional schemes. Then:
\begin{enumerate}
	\item if $\bbX$ imposes independent conditions on $\calO_{\bbP^2}(d)$, then also $\bbX'$ does;
	\item if $\calL_{d}(\bbX)$ is empty, then also $\calL_{d}(\bbX'')$ is empty.
\end{enumerate}
\end{lemma}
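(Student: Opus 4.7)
Both claims reduce to the elementary antitonicity between scheme inclusions and (saturated) ideal inclusions, combined with the fact that for $0$-dimensional schemes there is no higher cohomology to worry about. I expect no real obstacle beyond getting the direction of each inclusion right.

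First I would record the basic dictionary: the scheme inclusions $\bbX'\subseteq\bbX\subseteq\bbX''$ correspond to the reverse inclusions of saturated ideals
$$I(\bbX'')\subseteq I(\bbX)\subseteq I(\bbX'),$$
and in particular, restricting to each graded piece,
$$I(\bbX'')_d\subseteq I(\bbX)_d\subseteq I(\bbX')_d.$$

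Part (2) is then immediate, and I would handle it first. The hypothesis $\calL_d(\bbX)=\emptyset$ is exactly the vanishing $I(\bbX)_d=0$; from $I(\bbX'')_d\subseteq I(\bbX)_d=0$ we conclude $I(\bbX'')_d=0$, i.e., $\calL_d(\bbX'')=\emptyset$.

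For part (1), I would rephrase ``imposes independent conditions on $\calO_{\bbP^2}(d)$'' as the surjectivity of the evaluation map $\mathrm{ev}_\bbX\colon H^0(\bbP^2,\calO_{\bbP^2}(d))\twoheadrightarrow H^0(\bbX,\calO_\bbX(d))$, equivalently $H^1(\calI_\bbX(d))=0$. The closed embedding $\bbX'\hookrightarrow\bbX$ yields a surjection of structure sheaves $\calO_\bbX\twoheadrightarrow\calO_{\bbX'}$, whose kernel is supported on the $0$-dimensional scheme $\bbX$ and hence has vanishing $H^1$; passing to global sections (after twisting by $\calO(d)$) gives a surjection $H^0(\calO_\bbX(d))\twoheadrightarrow H^0(\calO_{\bbX'}(d))$. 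The evaluation map for $\bbX'$ is then the composition $\mathrm{ev}_{\bbX'}=\bigl(H^0(\calO_\bbX(d))\twoheadrightarrow H^0(\calO_{\bbX'}(d))\bigr)\circ\mathrm{ev}_\bbX$ of two surjections, hence surjective, so $\bbX'$ imposes independent conditions on $\calO_{\bbP^2}(d)$.

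The one pitfall to keep in mind is that the naive Hilbert-function inequality $\HF_{\bbX'}(d)\le\HF_\bbX(d)$ (coming from $I(\bbX)_d\subseteq I(\bbX')_d$) goes the wrong way to yield (1) by itself; the right viewpoint is that ``independent conditions'' is a surjectivity assertion, which is preserved whenever we post-compose with a further surjection on the target, and that is exactly what shrinking from $\bbX$ to $\bbX'$ provides on global sections in this $0$-dimensional setting.
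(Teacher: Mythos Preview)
Your proof is correct. The paper does not actually supply a proof of this lemma; it simply states that the result is well known and can be found elsewhere in the literature. Your argument fills that gap cleanly: part~(2) is the immediate inclusion $I(\bbX'')_d\subseteq I(\bbX)_d$, and for part~(1) your reformulation of ``independent conditions'' as surjectivity of the evaluation map, together with the factorisation through the surjection $H^0(\calO_\bbX(d))\twoheadrightarrow H^0(\calO_{\bbX'}(d))$, is exactly the right way to see why the property passes to subschemes. Your closing remark about the direction of the naive Hilbert-function inequality is a useful caution and shows you understand why the cohomological (or surjectivity) viewpoint is needed rather than a bare dimension count.
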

In Question \ref{question: 0-dim P2}, we consider, for any positive integers $a,b$ and $s$, the scheme
$$
	\bbX_{a,b;s} = aQ_1 + bQ_2 + Y_1 + \ldots Y_s \subset \bbP^2,
$$
where the $Y_i$'s are general $(3,2)$-points with support at general points $\{P_1,\ldots,P_s\}$ and general directions. The previous lemma suggests that, fixed $a,b$, there are two critical values to be considered firstly, i.e.,
$$
	s_1 = \left\lfloor \frac{(a+1)(b+1)}{5} \right\rfloor \quad \quad \text{ and } \quad \quad 	s_2 = \left\lceil \frac{(a+1)(b+1)}{5} \right\rceil;
$$
namely, $s_1$ is the largest number of $(3,2)$-points for which we expect to have \textit{subabundance}, i.e., where we expect to have positive virtual dimension, and $s_2$ is the smallest number of $(3,2)$-points where we expect to have \textit{superabundance}, i.e., where we expect that the virtual dimension is negative. If we prove that the dimension of linear system $\calL_{a+b}(\bbX_{a,b;s})$ is as expected for $s = s_1$ and $s = s_2$, then, by Lemma \ref{lemma: super- and sub-abundance}, we have that it holds for any $s$.

\subsection{Low bi-degrees}
Now, we answer to Question \ref{question: 0-dim P2} for $b = 1,2$. These will be the base cases of our inductive approach to solve the problem in general. Recall that $a,b$ are positive integers such that $ab > 1$; see Remark \ref{rmk: (a,b) = (1,1)}.
\begin{lemma}\label{lemma: b = 1}
	Let $a > b = 1$ be a positive integer. Then,
	$$
		\dim\mathcal{L}_{a+1}(\bbX_{a,1;s}) = \max \{0,~ 2(a+1) - 5s\}.
	$$
\end{lemma}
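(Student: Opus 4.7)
The plan is to translate the problem to $\bbP^1\times\bbP^1$ via the multiprojective-affine-projective method and then exploit a strong bi-degree restriction: in bi-degree $(a,1)$ the ``direction'' of a $(3,2)$-fat point is invisible, so the whole computation reduces to a transparent piece of linear algebra on binary forms of degree $a$.

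First, by the discussion around Question~\ref{question: 0-dim P2}, the statement is equivalent to $\dim\calL_{a,1}(\bbY) = \max\{0,\, 2(a+1)-5s\}$, where $\bbY = Y_1+\cdots+Y_s$ is a union of $s$ generic $(3,2)$-fat points in $\bbP^1\times\bbP^1$. The key observation is the following: if $\ell$ is a $(1,1)$-form, then $\ell^2$ has bi-degree $(2,2)$, so any element of $(\ell)^2$ of bi-degree $(a,1)$ would require a multiplier of bi-degree $(a-2,-1)$, which is impossible. Hence $[(\ell)^2]_{a,1}=0$, and for every $(3,2)$-fat point $Y$ with support $P$ and direction $\ell$ one has $[I(Y)]_{a,1} = [\wp^3+(\ell)^2]_{a,1} = [\wp^3]_{a,1}$. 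In other words, in bi-degree $(a,1)$ every $(3,2)$-fat point imposes exactly the conditions of a triple point supported at $P$; the direction plays no role.

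Next, I would write every bi-homogeneous form in $\calS_{a,1}$ uniquely as $F = f(x_0,x_1)\,y_0 + g(x_0,x_1)\,y_1$ with $f,g$ binary forms of degree $a$, and work out the triple-point conditions at a general support $P_i = ([1:\sigma_i],[1:\tau_i])$ in the affine chart $(x,y) = (x_1/x_0,\, y_1/y_0)$. Computing all partial derivatives of $F(x,y) = f(x)+y\,g(x)$ of order $\leq 2$ at $(\sigma_i,\tau_i)$ and noting that $\partial_y^2 F\equiv 0$ (which confirms the length-$5$ count), the five surviving conditions read
\[
f(\sigma_i)=f'(\sigma_i)=0,\qquad g(\sigma_i)=g'(\sigma_i)=0,\qquad f''(\sigma_i)+\tau_i\,g''(\sigma_i)=0.
\]
I would then split into two cases. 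If $a\geq 2s$, the first four conditions per point force $f=P\cdot h_f$ and $g=P\cdot h_g$, where $P(x_0,x_1)=\prod_{i=1}^{s}(x_1-\sigma_i x_0)^2$ and $h_f,h_g \in \bbC[x_0,x_1]_{(a-2s)}$; substituting into the fifth condition and cancelling the nonzero factors $\prod_{j\neq i}(\sigma_i-\sigma_j)^2$ collapses it to the $s$ linear equations $h_f(\sigma_i)+\tau_i\,h_g(\sigma_i)=0$. If instead $a<2s$, then $\deg P = 2s>a$ already forces $f=g=0$, and since $2(a+1)-5s \leq -s<0$ the expected formula also gives $0$.

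The main obstacle is to show that the induced linear map $\Psi:\bbC[x]_{\leq a-2s}^{\oplus 2}\to\bbC^s$, whose matrix has the block form $[\,V_\sigma \mid D_\tau V_\sigma\,]$ with $V_\sigma$ a Vandermonde-type block and $D_\tau=\mathrm{diag}(\tau_i)$, attains its maximal possible rank $\min\{s,\,2(a-2s+1)\}$ for generic $(\sigma,\tau)$. By lower semicontinuity of the rank it suffices to exhibit one specialization realizing it: when $a-2s+1\geq s$ the first $s$ columns already form a nonsingular Vandermonde matrix; when $a-2s+1<s$, a short Vandermonde-style computation (or an induction on $s$) shows that $V_\sigma$ and $D_\tau V_\sigma$ have transverse column spans for generic $\tau_i$. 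The claimed dimension then follows as $2(a-2s+1) - \min\{s,\,2(a-2s+1)\} = \max\{0,\,2(a+1)-5s\}$.
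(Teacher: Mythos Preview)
Your proof is correct and takes a genuinely different route from the paper's. The paper stays in $\bbP^2$: it observes that, for $s\le s_2\le a$, each line $\overline{Q_1P_i}$ meets the scheme $aQ_1+Y_i$ in degree at least $a+2>a+1$ and is therefore a fixed component; peeling off these $s$ lines replaces each $(3,2)$-point by its residue $2P_i$ and $aQ_1$ by $(a-s)Q_1$, so one is left with $\calL_{a+1-s}\big((a-s)Q_1+Q_2+2P_1+\cdots+2P_s\big)$, whose dimension is read off from the known result on double points in \cite{CGG05-SegreVeronese}. Your approach instead works on $\bbP^1\times\bbP^1$ and exploits the bi-degree constraint $[(\ell)^2]_{a,1}=0$ to replace each $(3,2)$-point by a triple point, then reduces everything to explicit linear algebra on pairs of binary forms. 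What you gain is a completely self-contained argument that avoids the external citation; what the paper gains is brevity.

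Two small points worth tightening. First, your final rank claim (that $[\,V_\sigma \mid D_\tau V_\sigma\,]$ has rank $\min\{s,2m\}$ for generic $(\sigma,\tau)$, with $m=a-2s+1$) is stated a bit loosely; the cleanest one-line justification is the specialization $\tau_i=\sigma_i^{\,m}$, which turns the block matrix into the honest $s\times 2m$ Vandermonde $(\sigma_i^{\,j})_{0\le j\le 2m-1}$ and hence has the desired rank for distinct $\sigma_i$. Second, the equivalence you invoke at the start is really the isomorphism $\calL_{a+1}(\bbX_{a,1;s})\cong\calL_{a,1}(\bbY)$ coming from the multiprojective--affine--projective method (curves of degree $a+b$ in $\bbP^2$ with multiplicities $a,b$ at $Q_1,Q_2$ correspond to curves of bi-degree $(a,b)$ on $\bbP^1\times\bbP^1$), so it is the \emph{linear system dimensions} that agree, not the Hilbert functions literally; your phrasing is consistent with that, but it is worth saying so explicitly.
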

\begin{proof}
	First of all, note that $a \geq s_2$. Indeed, 
	$$
		a \geq \frac{2(a+1)}{5} \quad \Longleftrightarrow \quad 3a \geq 2.
	$$
	Now, if $s \leq s_2$, we note that every line $\overline{Q_1P_i}$ is contained in the base locus of $\mathcal{L}_{a+1}(\bbX_{a,1;s})$. Hence,
	$$
		\dim\mathcal{L}_{a+1}(\bbX_{a,1;s}) = \dim\mathcal{L}_{a+1-s}(\bbX'),
	$$
	where $\bbX' = (a-s)Q_1 + Q_2 + 2P_1 + \ldots + 2P_s$. By \cite{CGG05-SegreVeronese},
	$$
		\dim\mathcal{L}_{a+1}(\bbX_{a,1;s}) = \dim\mathcal{L}_{a+1-s}(\bbX') = \max\{0,~2(a+1-s) - 3s\}.
	$$
\end{proof}

Now, we prove the case $b = 2$ which is the crucial base step for our inductive procedure. In order to make our construction to work smoothly, we need to consider separately the following easy case.

\begin{lemma}\label{lemma: a = 2}
	Let $a = b = 2$. Then, 
	 $$\dim\mathcal{L}_{4}(\bbX_{2,2;s}) = \max\{0, 9 - 5s\}.$$
\end{lemma}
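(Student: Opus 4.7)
The plan is to use the critical-values reduction from Section \ref{sec: super- and sub-abundance}: since $(a+1)(b+1) = 9$ gives $s_1 = \lfloor 9/5 \rfloor = 1$ and $s_2 = \lceil 9/5 \rceil = 2$, by Lemma \ref{lemma: super- and sub-abundance} it suffices to establish the expected dimension at the two critical values of $s$, namely to show
$$
\dim \mathcal{L}_4(\bbX_{2,2;1}) = 4 \quad \text{and} \quad \mathcal{L}_4(\bbX_{2,2;2}) = \emptyset.
$$
Both will be attacked by evaluating the generic scheme at an explicit configuration and invoking semi-continuity of the Hilbert function.

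For $s = 1$, the lower bound $\dim \mathcal{L}_4(\bbX_{2,2;1}) \geq 9 - 5 = 4$ is immediate from $\dim \mathcal{L}_4(2Q_1 + 2Q_2) = 15 - 6 = 9$ and $\deg Y_1 = 5$, so only the matching upper bound is needed. I would set $Q_1 = [0:1:0]$, $Q_2 = [0:0:1]$ and place $Y_1$ at $P_1 = [1:0:0]$ with direction the line $\{x_1 + x_2 = 0\}$. Writing a generic quartic $F = \sum_{i+j+k=4} c_{ijk}\, x_0^i x_1^j x_2^k$ and expanding the local conditions from the three ideals $(x_0,x_2)^2$, $(x_0,x_1)^2$, and $(x_1,x_2)^3 + (x_1+x_2)^2$, one obtains $11$ linear equations in the $15$ coefficients, whose solution space is a straightforward computation to check is $4$-dimensional.

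For $s = 2$, I would superimpose onto the resulting $4$-dimensional system the $5$ defining conditions from a second $(3,2)$-point $Y_2$, placed at $P_2 = [1:1:1]$ with direction $\{x_0 - x_1 = 0\}$. Parametrising the generic element of $\mathcal{L}_4(\bbX_{2,2;1})$ by coefficients $(a,b,c,d)$, the local expansion of $I(Y_2)$ at $P_2$ gives a $5 \times 4$ linear system whose full column rank needs to be verified; this would force $(a,b,c,d) = (0,0,0,0)$, and semi-continuity would then yield emptiness in the generic case.

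The hard part is precisely this last rank check: since the virtual dimension is $-1$, one really needs $4$ of the $5$ new conditions to be independent on the residual $4$-dimensional space. A Horace-style degeneration would be more satisfying, but it seems to fail here: if both $Y_1, Y_2$ are specialized onto a common generic line $L$ with direction $L$ itself, then the reducible cubic $L \cdot (\overline{Q_1Q_2})^{2}$ lies in the degree-$3$ residue linear system, so the emptiness hypothesis of Proposition \ref{proposition: Horace}(2)(b) fails. For this reason the exceptional case $a = b = 2$ is treated separately from the paper's main inductive scheme, by an explicit coordinate computation at a single well-chosen configuration.
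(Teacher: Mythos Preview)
Your plan would work if the two linear-algebra checks were actually carried out, but as written both the $s=1$ and $s=2$ cases defer the decisive step to an unperformed ``straightforward computation'' or ``rank check.'' More importantly, your conclusion that a specialization argument ``seems to fail here'' is premature: you tried one particular degeneration (both $(3,2)$-points on a common line $L$ with direction $L$) and found it obstructed, but a different specialization works cleanly, and that is precisely what the paper does.

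For $s=1$ the paper avoids coordinates entirely by noting $\bbX_{2,2;1}\subset 2Q_1+2Q_2+3P_1$; the larger scheme of two general double points and a general triple point imposes independent conditions on quartics (peel off the fixed lines $\overline{Q_1P_1}$ and $\overline{Q_2P_1}$ to see this), and Lemma~\ref{lemma: super- and sub-abundance}(1) transfers this to the subscheme. For $s=2$ the paper specializes only the \emph{direction} of each $Y_i$, taking it to be $\overline{Q_1P_i}$ while the supports $P_i$ remain general. Each line $\overline{Q_1P_i}$ then carries degree $2+3=5$ of the scheme and is a fixed component of $\calL_4$; removing both leaves $\calL_2(2Q_2+J_1+J_2)$ with $J_i$ a $2$-jet along $\overline{Q_1P_i}$. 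Now each $\overline{Q_2P_i}$ carries degree $3$ and is fixed for $\calL_2$, so the only candidate conic is $\overline{Q_2P_1}\cdot\overline{Q_2P_2}$, which fails to contain $J_i$ since its tangent at $P_i$ is $\overline{Q_2P_i}\neq\overline{Q_1P_i}$. Hence emptiness.

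So the paper's route is a short fixed-component argument requiring no computation, whereas yours is a valid but heavier brute-force plan whose verifications remain to be done; the claim that degeneration fails should be dropped.
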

\begin{proof}
	For $s = s_1 = 1$, then it follows easily because the scheme $2Q_1 + 2Q_2 + 3P_1 \supset \bbX_{2,2;1}$ imposes independent conditions on quartics. For $s = s_2 = 2$, we specialize the directions of the $(3,2)$-points supported at the $P_i$'s to be along the lines $\overline{Q_1P_i}$, respectively. Now, the lines $\overline{Q_1P_i}$ are fixed components and we can remove them. We remain with the linear system $\calL_2(2Q_2 + J_1 + J_2)$, where the $J_i$'s are $2$-jets contained in the lines $\overline{Q_1P_i}$, respectively. Since both lines $\overline{Q_2P_1}$ and $\overline{Q_2P_2}$ are fixed components for this linear system, we conclude that the linear system has to be empty. 
\end{proof} 

\begin{lemma}\label{lemma: b = 2}
	Let $a > b = 2$ be a positive integer. Then,
	$$
		\dim\mathcal{L}_{a+2}(\bbX_{a,2;s}) = \max \{0,~ 3(a+1) - 5s\}.
	$$
\end{lemma}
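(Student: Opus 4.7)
My plan is to prove the lemma by induction on $a$, with the base case $a=2$ provided by Lemma~\ref{lemma: a = 2} (interpreted as the analogous statement for the excluded value $a=b=2$). By Lemma~\ref{lemma: super- and sub-abundance}, I need only verify the formula at the two critical values $s = s_1 := \lfloor 3(a+1)/5\rfloor$ (subabundance) and $s = s_2 := \lceil 3(a+1)/5\rceil$ (superabundance). Writing $a+1 = 5q+r$ with $r\in\{0,1,2,3,4\}$ separates the analysis into five residue classes: the virtual dimension at $s_1$ takes the values $\{0,3,1,4,2\}$ respectively, while the one at $s_2$ equals that value minus $5$ (except when $r=0$, where $s_1 = s_2$).

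For the inductive step I would apply the differential Horace lemma (Proposition~\ref{proposition: Horace}) to the line $L := \overline{Q_1Q_2}$. I specialize the supports of $k$ suitably chosen $(3,2)$-points onto $L$ with directions transverse to $L$, so that each becomes vertically graded with base $L$; by Example~\ref{example: (3,2)-point}, at slice level $j\in\{1,2,3\}$ each such point contributes a trace on $L$ of length $2, 2, 1$ and a residue in $\bbP^2$ that is respectively a $2$-fat point, a $2$-fat point, or a $(2,2)$-jet along $L$. The unspecialized part $aQ_1+2Q_2$ contributes a trace of length $a+2$ on $L$ and a residue $(a-1)Q_1+Q_2$ in $\bbP^2$. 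For each residue class $r$ I would pick a pair $(k,\bfj)$ so that the Horace trace condition on $\calO_L(a+2)\cong\calO_{\bbP^1}(a+2)$ is satisfied --- the total trace has length at most $a+3$ in the subabundance case and at least $a+3$ in the superabundance case, which translates immediately into the required condition on a line since every $0$-dimensional subscheme of $\bbP^1$ imposes independent conditions up to its length.

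The main obstacle will be the Horace residue condition in $\bbP^2$ in degree $a+1$: after specialization the residue is a mixture of $(a-1)Q_1$, $Q_2$, a few $2$-fat points and $(2,2)$-jets on $L$, and the remaining unspecialized $(3,2)$-points $Y_{k+1},\dots,Y_s$, which does not fit directly into the inductive schema $\bbX_{a',2;s'}$. I plan to clear it by a secondary specialization in the style of the proofs of Lemmas~\ref{lemma: b = 1} and~\ref{lemma: a = 2}: align the directions of the remaining $(3,2)$-points with the lines $\overline{Q_1P_j}$, which by the B\'ezout estimate $(a-1)+3 = a+2 > a+1$ along each such line in degree $a+1$ forces them to be fixed components. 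Removing these lines, tracking the $(2,2)$-jet residues via Lemma~\ref{lemma: residue} and pairs of colliding $2$-jets via Lemma~\ref{lemma: degeneration}, and iterating should reduce the problem either to the inductive hypothesis at $\bbX_{a-1,2;s'}$, to the $b=1$ setting of Lemma~\ref{lemma: b = 1} once the $Q_2$-multiplicity has dropped, or to a small-degree linear system that can be verified by direct inspection. The five residue classes demand slightly different choices of $(k,\bfj)$ and the bookkeeping of the trace--residue counts is the only substantive work, but the template of the argument is uniform across them.
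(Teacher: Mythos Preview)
Your route diverges from the paper's, and as outlined it has a gap at the residue step.

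The paper does not induct on $a$. It first specializes the direction of every $(3,2)$-point $Y_i$ along $\overline{Q_1P_i}$; since each such line meets the scheme in length $a+3>a+2$, all $s$ lines become fixed components and are removed at once, leaving $a'Q_1+2Q_2$ plus $s$ ordinary $2$-jets $J_1,\dots,J_s$ in degree $a'+2$ with $a'=a-s$. Only then does the Horace-type work begin, and on a \emph{generic} line $L$ rather than $\overline{Q_1Q_2}$: pairs of the $2$-jets are collided into $(2,2)$-jets via Lemma~\ref{lemma: degeneration}, their supports are moved onto $L$, and Lemma~\ref{lemma: residue} controls the direction of each residue $2$-jet. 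Two removals of $L$, interleaved with stripping further lines through $Q_1$ and, once, the line $\overline{Q_1Q_2}$, reduce everything to the elementary emptiness of $\calL_{a'''}(a'''Q_1+\widetilde P_1+\cdots+\widetilde P_{a'''+1})$ with the $\widetilde P_i$ collinear. The collision trick is precisely what makes the trace arithmetic close for every residue class of $a$ modulo $5$.

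In your plan, one Horace pass along $L=\overline{Q_1Q_2}$ leaves a residue carrying $(a-1)Q_1+Q_2$ in degree $a+1$. This matches neither $\bbX_{a-1,2;s'}$ (the multiplicity at $Q_2$ is now $1$, and nothing in the subsequent moves restores it) nor any $\bbX_{a',1;s'}$ in the correct degree: that scheme is studied in degree $a'+1$, and with $a'=a-1$ this is $a$, not $a+1$. Removing further lines $\overline{Q_1P_j}$ lowers both the degree and the multiplicity at $Q_1$ by the same amount, so the off-by-one persists; a second pass on $\overline{Q_1Q_2}$ then eliminates $Q_2$ entirely and lands in a ``$b=0$'' situation that neither Lemma~\ref{lemma: b = 1} nor your inductive hypothesis covers. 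The sentence ``iterating should reduce the problem either to the inductive hypothesis at $\bbX_{a-1,2;s'}$, to the $b=1$ setting \dots'' is exactly where the scheme fails to close: you would need, for general $a$, an independent statement about $(a-1)Q_1+Q_2$ plus $(3,2)$-points, $2$-fat points and $(2,2)$-jets in degree $a+1$, and that is essentially as hard as the lemma itself.
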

\begin{proof}
	We split the proof in different steps. Moreover, in order to help the reader in following the constructions, we include figures showing the procedure in the case of $a = 15$. 

	\medskip
	\noindent {\sc Step 1.} Note that since $a > 2$, then $a \geq \frac{3(a+1)}{5}$ which implies $a \geq \left\lceil \frac{3(a+1)}{5} \right\rceil = s_2$.
	
	We specialize the $(3,2)$-points with support at the $P_i$'s to have direction along the lines $\overline{Q_1P_i}$, respectively. In this way, for any $s \leq s_2$, every line $\overline{Q_1P_i}$ is a fixed component of the linear system $\mathcal{L}_{a+2}(\bbX_{a,2;s})$ and can be removed, i.e.,
	$$
		\dim\mathcal{L}_{a+2}(\bbX_{a,2;s}) = \dim\mathcal{L}_{(a-s)+2}(\bbX'),
	$$
	where $\bbX' = a'Q_1 + 2Q_2 + J_1+\ldots+J_s$, where $a' = a-s$ and $J_i$ is a $2$-jet contained in $\overline{Q_1P_i}$, for $i = 1,\ldots,s$. Now, as suggested by Lemma \ref{lemma: super- and sub-abundance}, we consider two cases: $s = s_1$ and $s = s_2$. See Figure \ref{fig: Lemma_1}.
	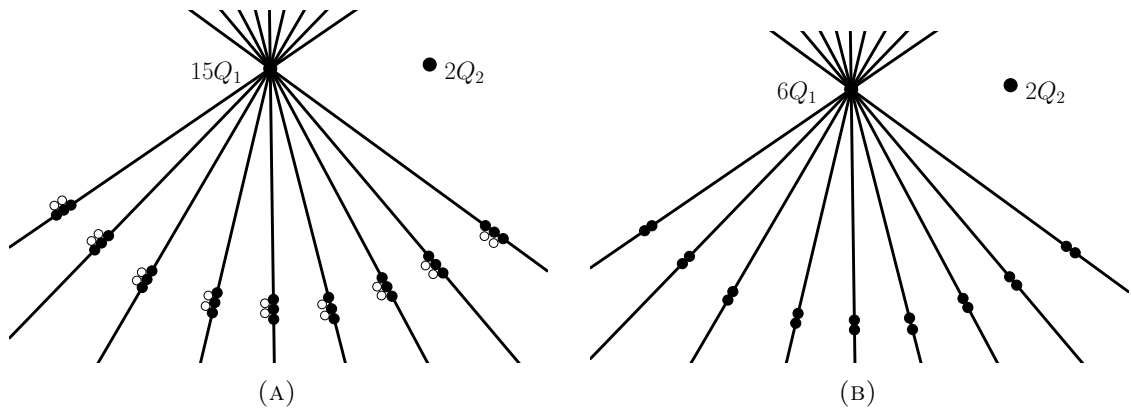
\begin{figure}[h!]
	\centering
	\begin{subfigure}[b]{0.46\textwidth}
	\centering
	\scalebox{0.55}{
	\begin{tikzpicture}[line cap=round,line join=round,x=1.0cm,y=1.0cm]
\clip(-5.,-4.) rectangle (8.,4.5);
\draw [line width=2.pt,domain=-5.:8.] plot(\x,{(-10.638532277887979-3.303223402088166*\x)/-4.817007322775031});
\draw [line width=2.pt,domain=-5.:8.] plot(\x,{(--7.162--4.22*\x)/4.08});
\draw [line width=2.pt,domain=-5.:8.] plot(\x,{(-2.608-5.1*\x)/-2.98});
\draw [line width=2.pt,domain=-5.:8.] plot(\x,{(--3.204-5.66*\x)/-1.34});
\draw [line width=2.pt,domain=-5.:8.] plot(\x,{(--7.814-5.82*\x)/0.08});
\draw [line width=2.pt,domain=-5.:8.] plot(\x,{(--12.1506341821624-5.812947455129581*\x)/1.4818717711270788});
\draw [line width=2.pt,domain=-5.:8.] plot(\x,{(--15.606-5.28*\x)/2.82});
\draw [line width=2.pt,domain=-5.:8.] plot(\x,{(--18.562-4.74*\x)/4.});
\draw [line width=2.pt,domain=-5.:8.] plot(\x,{(--21.9276929872492-3.9572957108762044*\x)/5.41393824616456});
\begin{scriptsize}
\draw [fill=black] (1.3,3.1) circle (4.5pt);
\draw[color=black] (0.,3.0) node {\LARGE $15Q_1$};
\draw [fill=black] (5.153549103316712,3.1969296513445316) circle (4.5pt);
\draw[color=black] (6,3.0) node {\LARGE $2Q_2$};
\draw [fill=black] (-1.68,-2.) circle (3.5pt);
\draw [fill=black] (1.38,-2.72) circle (3.5pt);
\draw [fill=black] (5.3,-1.64) circle (3.5pt);
\draw [fill=black] (-3.5170073227750307,-0.2032234020881659) circle (3.5pt);
\draw [fill=black] (6.71393824616456,-0.8572957108762045) circle (3.5pt);
\draw [fill=black] (-2.78,-1.12) circle (3.5pt);
\draw [fill=black] (-0.04,-2.56) circle (3.5pt);
\draw [fill=black] (2.781871771127079,-2.7129474551295814) circle (3.5pt);
\draw [fill=black] (4.12,-2.18) circle (3.5pt);
\draw [fill=black] (-3.692809695702354,-0.3237784424864891) circle (3.5pt);
\draw [fill=black] (-3.868434085104564,-0.4442114321371302) circle (3.5pt);
\draw [color=black] (-3.730728634124213,-0.0883446707358218) circle (3.0pt);
\draw [color=black] (-3.9189726904653908,-0.2154369516225996) circle (3.0pt);
\draw [fill=black] (-2.942781799741943,-1.2883674497330886) circle (3.5pt);
\draw [fill=black] (-2.6092513870360494,-0.9433923660029729) circle (3.5pt);
\draw [color=black] (-3.0209331459720903,-1.0713532114933095) circle (3.0pt);
\draw [color=black] (-2.8529828829557946,-0.9017441734600915) circle (3.0pt);
\draw [fill=black] (-1.5606462502768503,-1.7957368712791735) circle (3.5pt);
\draw [fill=black] (-1.7943395169594518,-2.195681723655438) circle (3.5pt);
\draw [color=black] (-1.8065098757689753,-1.8274702952022752) circle (3.0pt);
\draw [color=black] (-1.9272567612136082,-2.028715104276663) circle (3.0pt);
\draw [fill=black] (0.015860309514192927,-2.324052722499752) circle (3.5pt);
\draw [fill=black] (-0.09866682693045781,-2.8078016719599934) circle (3.5pt);
\draw [color=black] (-0.26363300619866475,-2.6458658521047855) circle (3.0pt);
\draw [color=black] (-0.19655140317386866,-2.390955760610561) circle (3.0pt);
\draw [fill=black] (1.3833771614291572,-2.965688493971196) circle (3.5pt);
\draw [fill=black] (1.3767723595596177,-2.4851891579622047) circle (3.5pt);
\draw [color=black] (1.171913298531972,-2.5787842490799897) circle (3.0pt);
\draw [color=black] (1.171913298531972,-2.8202780199692548) circle (3.0pt);
\draw [fill=black] (2.711156255843747,-2.435551271051) circle (3.5pt);
\draw [fill=black] (2.843016170049994,-2.9527989625539086) circle (3.5pt);
\draw [color=black] (2.553794320842772,-2.605616890289908) circle (3.0pt);
\draw [color=black] (2.620875923867568,-2.8739433023890917) circle (3.0pt);
\draw [fill=black] (4.0029287318171365,-1.9608027319129353) circle (3.5pt);
\draw [fill=black] (4.241892133233513,-2.4082235686074287) circle (3.5pt);
\draw [color=black] (3.882010060733735,-2.176294630931214) circle (3.0pt);
\draw [color=black] (3.9893406255734085,-2.390955760610561) circle (3.0pt);
\draw [fill=black] (5.129811402518572,-1.4383265119845086) circle (3.5pt);
\draw [fill=black] (5.473324931017845,-1.8453900432561468) circle (3.5pt);
\draw [color=black] (5.053640490627722,-1.6646197621124534) circle (3.0pt);
\draw [color=black] (5.237141378826413,-1.8743350629109574) circle (3.0pt);
\draw [fill=black] (6.499833566905019,-0.700796783406437) circle (3.5pt);
\draw [color=black] (6.477093465553725,-0.9548200323041947) circle (3.0pt);
\draw [color=black] (6.700006200246276,-1.1220045833236065) circle (3.0pt);
\draw [fill=black] (6.929793986843945,-1.0150745694296224) circle (3.5pt);
\end{scriptsize}
\end{tikzpicture}
}
\caption{}
\end{subfigure}
\begin{subfigure}[b]{0.46\textwidth}
\centering
	\scalebox{0.55}{
\begin{tikzpicture}[line cap=round,line join=round,x=1.0cm,y=1.0cm]
\clip(-5.,-3.5) rectangle (8.,4.5);
\draw [line width=2.pt,domain=-5.:8.] plot(\x,{(-10.638532277887979-3.303223402088166*\x)/-4.817007322775031});
\draw [line width=2.pt,domain=-5.:8.] plot(\x,{(--7.162--4.22*\x)/4.08});
\draw [line width=2.pt,domain=-5.:8.] plot(\x,{(-2.608-5.1*\x)/-2.98});
\draw [line width=2.pt,domain=-5.:8.] plot(\x,{(--3.204-5.66*\x)/-1.34});
\draw [line width=2.pt,domain=-5.:8.] plot(\x,{(--7.814-5.82*\x)/0.08});
\draw [line width=2.pt,domain=-5.:8.] plot(\x,{(--12.1506341821624-5.812947455129581*\x)/1.4818717711270788});
\draw [line width=2.pt,domain=-5.:8.] plot(\x,{(--15.606-5.28*\x)/2.82});
\draw [line width=2.pt,domain=-5.:8.] plot(\x,{(--18.562-4.74*\x)/4.});
\draw [line width=2.pt,domain=-5.:8.] plot(\x,{(--21.9276929872492-3.9572957108762044*\x)/5.41393824616456});
\begin{scriptsize}
\draw [fill=black] (1.3,3.1) circle (4.5pt);
\draw[color=black] (0.,3.) node {\LARGE $6Q_1$};
\draw [fill=black] (5.153549103316712,3.1969296513445316) circle (4.5pt);
\draw[color=black] (6,3) node {\LARGE $2Q_2$};
\draw [fill=black] (-1.68,-2.) circle (3.5pt);
\draw [fill=black] (1.38,-2.72) circle (3.5pt);
\draw [fill=black] (5.3,-1.64) circle (3.5pt);
\draw [fill=black] (-3.5170073227750307,-0.2032234020881659) circle (3.5pt);
\draw [fill=black] (6.71393824616456,-0.8572957108762045) circle (3.5pt);
\draw [fill=black] (-2.78,-1.12) circle (3.5pt);
\draw [fill=black] (-0.04,-2.56) circle (3.5pt);
\draw [fill=black] (2.781871771127079,-2.7129474551295814) circle (3.5pt);
\draw [fill=black] (4.12,-2.18) circle (3.5pt);
\draw [fill=black] (-3.692809695702354,-0.3237784424864891) circle (3.5pt);
\draw [fill=black] (-2.6092513870360494,-0.9433923660029729) circle (3.5pt);
\draw [fill=black] (-1.5606462502768503,-1.7957368712791735) circle (3.5pt);
\draw [fill=black] (0.015860309514192927,-2.324052722499752) circle (3.5pt);
\draw [fill=black] (1.3767723595596177,-2.4851891579622047) circle (3.5pt);
\draw [fill=black] (2.711156255843747,-2.435551271051) circle (3.5pt);
\draw [fill=black] (4.0029287318171365,-1.9608027319129353) circle (3.5pt);
\draw [fill=black] (5.129811402518572,-1.4383265119845086) circle (3.5pt);
\draw [fill=black] (6.499833566905019,-0.700796783406437) circle (3.5pt);
\end{scriptsize}
\end{tikzpicture}
}
\caption{}
\end{subfigure}
\caption{As example, we consider the case $a = 15$ and $s = s_1 = \left\lfloor \frac{16\cdot 3}{5} \right\rfloor = 9$. Here, we represent: ({\sc A}) the first specialization and ({\sc B}) the reduction explained in Step 1.}
\label{fig: Lemma_1}
	\end{figure}
	
	\medskip
	We fix $a = 5k + c$, with $ 0 \leq c \leq 4$.
	
	\medskip
	\noindent {\sc Step 2: case $s = s_1$.} Consider
	$$
			s_1 = \left\lfloor \frac{3(5k+c+1)}{5} \right\rfloor =
			\begin{cases}
				3k & \text{ for } c = 0; \\
				3k+1 & \text{ for } c = 1,2; \\
				3k+2 & \text{ for } c = 3; \\
				3k+3 & \text{ for } c = 4. 
			\end{cases}
	$$
		Note that the expected dimension is 
	$$
		{\it exp}.\dim\mathcal{L}_{a+2}(\bbX_{a,2;s_1}) = 
		\begin{cases}
			3 & \text{ for } c = 0; \\
			1 & \text{ for } c = 1; \\
			4 & \text{ for } c = 2; \\
			2 & \text{ for } c = 3; \\
			0 & \text{ for } c = 4. \\
		\end{cases}
	$$
	Let $A$ be a reduced set of points of cardinality equal to the expected dimension. Then, it is enough to show that 
	$$
		\dim\mathcal{L}_{a'+2}(\bbX'+A) = 0.
	$$
	where
	$$
		a' = a-s_1 = 
		\begin{cases}
				2k & \text{ for } c = 0,1; \\
				2k+1 & \text{ for } c = 2,3,4.
		\end{cases}
	$$
	Let
	$$
		t_1 = 
		\begin{cases}
			k & \text{ for } c = 0,2;\\
			k+1 & \text{ for } c = 1,3,4.
		\end{cases}
	$$
	Note that $s_1 \geq 2t_1$. Now, by using Lemma \ref{lemma: degeneration}, we specialize $2t_1$ lines in such a way that, for $i = 1,\ldots,t_1$, we have:
	\begin{itemize}
		\item the lines $\overline{Q_1P_{2i-1}}$ and $\overline{Q_1P_{2i}}$ both degenerate to a general line $R_i$ passing through $Q_1$;
		\item the point $P_{2i-1}$ and the point $P_{2i}$ both degenerate to a general point $\widetilde{P}_i$ on $R_i$.
	\end{itemize}
	In this way, from the degeneration of $J_{2i-1}$ and $J_{2i}$, we obtain the $(2,2)$-jet $W_i$ defined by the scheme-theoretic intersection $2R_i \cap 2\overline{P_{2i-1}P_{2i}}$, for any $i = 1,\ldots,t_1$. Note that the directions $\overline{P_{2i-1}P_{2i}}$ are generic, for any $i$; see Figure \ref{fig: Lemma_2}. Note that, these $(2,2)$-jets have general directions.
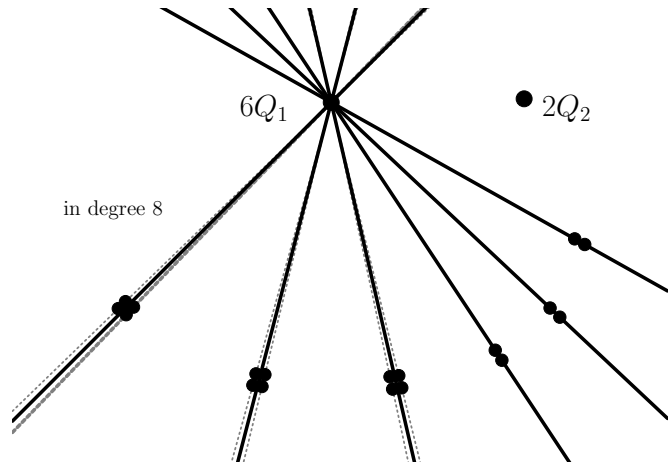
\begin{figure}[h]
\centering
\scalebox{0.67}{
\begin{tikzpicture}[line cap=round,line join=round,x=1.0cm,y=1.0cm]
\clip(-5.,-4.) rectangle (8.,5.);
\draw [line width=1.pt,dotted,domain=-5.:8.,color=gray] plot(\x,{(-7.514031000906525-3.9569717098083377*\x)/-4.082209051951633});
\draw [line width=2.pt,dotted,domain=-5.:8.,color=gray] plot(\x,{(--7.129839324208257--4.216172564101999*\x)/4.06924900923695});
\draw [line width=1.pt,dotted,domain=-5.:8.,color=gray] plot(\x,{(--2.5853698366721902-5.615857177287773*\x)/-1.555000722588424});
\draw [line width=1.pt,dotted,domain=-5.:8.,color=gray] plot(\x,{(--3.1869066081769386-5.641777262717139*\x)/-1.3735601245828604});
\draw [line width=1.pt,dotted,domain=-5.:8.,color=gray] plot(\x,{(--11.358054097920721-5.693617433575872*\x)/1.2184484183537643});
\draw [line width=1.pt,dotted,domain=-5.:8.,color=gray] plot(\x,{(--11.890854945651789-5.667697348146506*\x)/1.3998890163593283});
\draw [line width=2.pt,domain=-5.:8.] plot(\x,{(--17.373726988940465-5.120315184660661*\x)/3.3860744845036743});
\draw [line width=2.pt,domain=-5.:8.] plot(\x,{(--19.832137077686145-4.265077706727545*\x)/4.5352998454763025});
\draw [line width=2.pt,domain=-5.:8.] plot(\x,{(--19.464113343222074-2.8218644627154124*\x)/5.029734012406387});
\draw [line width=2.pt,domain=-5.:8.] plot(\x,{(-12.14777503007683-6.7563409361798845*\x)/-6.751977851176356});
\draw [line width=2.pt,domain=-5.:8.] plot(\x,{(--3.388023713470546-6.587860380889005*\x)/-1.7105212351646215});
\draw [line width=2.pt,domain=-5.:8.] plot(\x,{(--14.585189910082079-7.119222132191011*\x)/1.6461298279383072});
\begin{scriptsize}
\draw [fill=black] (1.325920085429366,3.125920085429366) circle (4.5pt);
\draw[color=black] (0.,3.) node {\LARGE $6Q_1$};
\draw [fill=black] (5.153549103316712,3.1969296513445316) circle (4.5pt);
\draw[color=black] (6,3.) node {\LARGE $2Q_2$};
\draw[color = black] (-3,1) node {\large in degree $8$};
\draw [fill=black] (-0.22908063715905802,-2.489937091858407) circle (3.5pt);
\draw [fill=black] (2.5443685037831303,-2.5676973481465053) circle (3.5pt);
\draw [fill=black] (5.861219930905668,-1.1391576212981789) circle (3.5pt);
\draw [fill=black] (-2.756288966522267,-0.8310516243789717) circle (3.5pt);
\draw [fill=black] (6.355654097835753,0.30405562271395387) circle (3.5pt);
\draw [fill=black] (-2.7433289238075838,-1.0902524786726335) circle (3.5pt);
\draw [fill=black] (-0.04764003915349429,-2.515857177287773) circle (3.5pt);
\draw [fill=black] (2.7258091017886943,-2.5417772627171393) circle (3.5pt);
\draw [fill=black] (4.71199456993304,-1.9943950992312944) circle (3.5pt);
\draw [fill=black] (-2.898985250235947,-0.9693701549990218) circle (3.5pt);
\draw [fill=black] (-2.5973158622124415,-0.9389674963821508) circle (3.5pt);
\draw [fill=black] (-0.16680037975532125,-2.265013060979562) circle (3.5pt);
\draw [fill=black] (0.009619284499902792,-2.280669547358283) circle (3.5pt);
\draw [fill=black] (2.491906026984575,-2.322548462063668) circle (3.5pt);
\draw [fill=black] (2.666250534361207,-2.300643908055881) circle (3.5pt);
\draw [fill=black] (4.580296319385906,-1.7952451528388482) circle (3.5pt);
\draw [fill=black] (5.668255850940832,-0.9576907230338629) circle (3.5pt);
\draw [fill=black] (6.156743519524302,0.4156517211976407) circle (3.5pt);
\end{scriptsize}
\end{tikzpicture}
}
\caption{Since $a = 15$, we have $a = 5\cdot 3 + 0$ and $t_1 = 3$. Hence, we specialize three pairs of lines to be coincident and we construct three $(2,2)$-jets.}
\label{fig: Lemma_2}
\end{figure}	
	
	Moreover, we specialize further as follows; see Figure \ref{fig: Lemma_3}:
	\begin{itemize}
		\item every scheme $W_i$, with support on $R_i$, for $i = 1,\ldots,t_1$, in such a way that all $\widetilde{P}_i$'s are collinear and lie on a line $L$;
		\item if $c = 0,1,2,3$, we specialize $A$ (the reduced part) to lie on $L$;
		\item if $c = 4$, we specialize the double point $2Q_2$ to have support on $L$.
	\end{itemize}
		Note that this can be done because
	$$
		s_1 - t_1 = 
		\begin{cases}
			k & \text{for } c =0,3; \\
			k-1 & \text{for } c = 1; \\
			k+1 & \text{for } c = 2,4;
		\end{cases}
	$$
	and, for $c = 1$, we may assume $k \geq 1$, since $a \geq 2$. 
	\begin{figure}[h!]
	\centering
	\scalebox{0.75}{
	\begin{tikzpicture}[line cap=round,line join=round,x=1.0cm,y=1.0cm]
\clip(-4.,-3.5) rectangle (8.5,4.5);
\draw [line width=2.pt,domain=-4.:8.5] plot(\x,{(--6.606070669096441-2.653365106436809*\x)/0.88280517651701});
\draw [line width=2.pt,domain=-4.:8.5] plot(\x,{(--9.676427200671663-2.92575777177952*\x)/1.7123646573334501});
\draw [line width=2.pt,domain=-4.:8.5] plot(\x,{(--8.795388688536372-2.0466723518098617*\x)/1.8237980204281958});
\draw [line width=2.pt,dotted,domain=-4.:8.5, color = gray] plot(\x,{(--3.6452140416492202-6.69929374398375*\x)/-1.7971916286827576});
\draw [line width=2.pt,domain=-4.:8.5] plot(\x,{(-0.3695083120389595--0.028408168922945176*\x)/0.3571005711237858});
\draw [->,line width=2.pt] (0.3289024955258037,-1.2460623060398182) -- (-0.007560515160232151,-0.4065304409771011);
\draw [line width=2.pt,dotted,domain=-4.:8.5, color = gray] plot(\x,{(--8.607111108960233-5.488725122958664*\x)/0.2637309371017562});
\draw [->,line width=2.pt] (1.4905991442246822,-1.1370380434870988) -- (1.8372807182972242,-0.33224153224727093);
\draw [->,line width=2.pt] (-1.6047720528515872,-1.3227603153116745) -- (-1.8647832334059937,-0.5798712280133718);
\draw [line width=2.pt,dotted,domain=-4.:8.5, color = gray] plot(\x,{(-4.489159349354937-5.241095427192564*\x)/-3.673581225579258});
\begin{scriptsize}
\draw [fill=black] (1.4125904789475021,3.2373534485241113) circle (4.5pt);
\draw[color=black] (0.5,3) node {\LARGE $6Q_1$};
\draw [fill=black] (5.153549103316712,3.1969296513445316) circle (4.5pt);
\draw[color=black] (6,3) node {\LARGE $2Q_2$};
\draw[color=black] (7,0.5) node {\LARGE $A$};
\draw[color = black] (-3,1) node {\large in degree $8$};
\draw [fill=black] (0.10387284793451354,-1.0256046803923533) circle (3.5pt);
\draw [fill=black] (1.4905991442246822,-1.1370380434870988) circle (3.5pt);
\draw [fill=black] (3.1249551362809522,0.3115956767445914) circle (3.5pt);
\draw [fill=black] (-1.716205415946333,-1.0008417108157432) circle (3.5pt);
\draw [fill=black] (3.236388499375698,1.1906810967142496) circle (3.5pt);
\draw [fill=black] (-1.6047720528515872,-1.3227603153116745) circle (3.5pt);
\draw [fill=black] (0.3289024955258037,-1.2460623060398182) circle (3.5pt);
\draw [fill=black] (1.7134658704141736,-0.901789832509303) circle (3.5pt);
\draw [fill=black] (2.295395655464512,0.5839883420873023) circle (3.5pt);
\draw [fill=black] (-1.8498750949904104,-1.181907592294197) circle (3.5pt);
\draw [fill=black] (-1.4927745238666246,-1.1534994233712519) circle (3.5pt);
\draw [fill=black] (0.16974203912965335,-0.8110453305399878) circle (3.5pt);
\draw [fill=black] (0.38458241343333954,-1.0157042832708876) circle (3.5pt);
\draw [fill=black] (1.4867785796410165,-0.9227971620403612) circle (3.5pt);
\draw [fill=black] (1.696281834687383,-0.6653890232810649) circle (3.5pt);
\draw [fill=black] (2.227862500801172,0.7869664406279986) circle (3.5pt);
\draw [fill=black] (3.0173602157974577,0.4954330816223558) circle (3.5pt);
\draw [fill=black] (3.091602635533084,1.3531602863725496) circle (3.5pt);
\draw [fill=black] (6.160542692533723,-0.5446604970705151) circle (3.5pt);
\draw [fill=black] (6.913021105428445,-0.48479911461147396) circle (3.5pt);
\draw [fill=black] (7.712198191933535,-0.42122274563133616) circle (3.5pt);
\end{scriptsize}
\end{tikzpicture}
	}
	\caption{Since $a = 15 = 5\cdot 3 + 0$, we have that $|A| = 3$. Then, we specialize the scheme in such a way the $(2,2)$-jets have support collinear with the support of the set $A$. Recall that they have general directions.}
	\label{fig: Lemma_3}
	\end{figure}
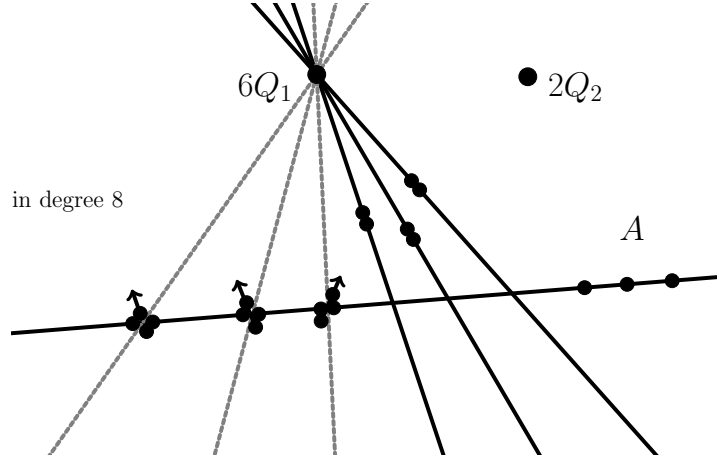
	
	By abuse of notation, we call again ${\bbX}$ the scheme obtained after such a specialization.
	In this way, we have that 
	$$
		\deg({\bbX}\cap L) \quad = \quad
		\begin{cases}
			2k + 3 & \text{for } c = 0,1; \\
			2k + 4 & \text{for } c = 2,3,4.
		\end{cases}  \quad = \quad a'+3.
	$$
	Therefore, the line $L$ becomes a fixed component of the linear system and can be removed, see Figure \ref{fig: Lemma_4}, i.e.,
	$$
		\dim\mathcal{L}_{a'+2}({\bbX}) = \dim\mathcal{L}_{a'+1}(\bbJ),
	$$
	with $$
	\bbJ = 
	\begin{cases}
		a'Q_1 + 2{Q}_2 +  {J}_1 + \ldots + {J}_{t_1} + J_{2t_1+1} + \ldots + J_{s_1}, & \text{ for } c = 0,1,2,3; \\
		a'Q_1 + {Q}_2 +  {J}_1 + \ldots + {J}_{t_1} + J_{2t_1+1} + \ldots + J_{s_1}, & \text{ for } c = 4; \\
	\end{cases}
	$$
	where
	\begin{itemize}
		\item
		by Lemma \ref{lemma: residue}, ${J}_i$ is a $2$-jet with support on $\widetilde{P}_i \in L$ with general direction, for all $i = 1,\ldots,t_1$;
		\item
		$J_i$ is a $2$-jet contained in the line $\overline{Q_1P_i}$, for $i = 2t_1+1,\ldots,s_1$.
	\end{itemize}
	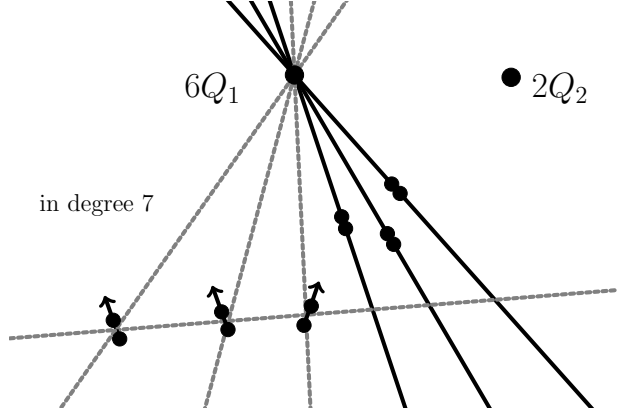
\begin{figure}[h]
	\centering
	\scalebox{0.77}{
	\begin{tikzpicture}[line cap=round,line join=round,x=1.0cm,y=1.0cm]
\clip(-3.5,-2.5) rectangle (7.,4.5);
\draw [line width=2.pt,domain=-3.5:7.] plot(\x,{(--6.606070669096441-2.653365106436809*\x)/0.88280517651701});
\draw [line width=2.pt,domain=-3.5:7.] plot(\x,{(--9.676427200671663-2.92575777177952*\x)/1.7123646573334501});
\draw [line width=2.pt,domain=-3.5:7.] plot(\x,{(--8.795388688536372-2.0466723518098617*\x)/1.8237980204281958});
\draw [line width=2.pt,dotted,domain=-3.5:7., color = gray] plot(\x,{(--3.6452140416492202-6.69929374398375*\x)/-1.7971916286827576});
\draw [line width=2.pt,dotted,domain=-3.5:7., color = gray] plot(\x,{(-0.3695083120389595--0.028408168922945176*\x)/0.3571005711237858});
\draw [->,line width=2.pt] (0.2648321501824796,-1.1618010130637089) -- (-0.007560515160232151,-0.4065304409771011);
\draw [line width=2.pt,dotted,domain=-3.5:7., color = gray] plot(\x,{(--8.607111108960233-5.488725122958664*\x)/0.2637309371017562});
\draw [->,line width=2.pt] (1.5772695377428179,-1.0875121043338787) -- (1.8372807182972242,-0.33224153224727093);
\draw [->,line width=2.pt] (-1.6047720528515872,-1.3227603153116745) -- (-1.8647832334059937,-0.5798712280133718);
\draw [line width=2.pt,dotted,domain=-3.5:7., color = gray] plot(\x,{(-4.489159349354937-5.241095427192564*\x)/-3.673581225579258});
\begin{scriptsize}
\draw [fill=black] (1.4125904789475021,3.2373534485241113) circle (4.5pt);
\draw[color=black] (0,3) node {\LARGE $6Q_1$};
\draw [fill=black] (5.153549103316712,3.1969296513445316) circle (4.5pt);
\draw[color=black] (6,3) node {\LARGE $2Q_2$};
\draw[color = black] (-2,1) node {\large in degree $7$};
\draw [fill=black] (1.5772695377428179,-1.0875121043338787) circle (3.5pt);
\draw [fill=black] (3.1249551362809522,0.3115956767445914) circle (3.5pt);
\draw [fill=black] (-1.716205415946333,-1.0008417108157432) circle (3.5pt);
\draw [fill=black] (3.236388499375698,1.1906810967142496) circle (3.5pt);
\draw [fill=black] (-1.6047720528515872,-1.3227603153116745) circle (3.5pt);
\draw [fill=black] (0.2648321501824796,-1.1618010130637089) circle (3.5pt);
\draw [fill=black] (2.295395655464512,0.5839883420873023) circle (3.5pt);
\draw [fill=black] (0.15570386993870922,-0.8567726333969978) circle (3.5pt);
\draw [fill=black] (1.7029018983959778,-0.7564612511753777) circle (3.5pt);
\draw [fill=black] (2.227862500801172,0.7869664406279986) circle (3.5pt);
\draw [fill=black] (3.0173602157974577,0.4954330816223558) circle (3.5pt);
\draw [fill=black] (3.0916026355330835,1.3531602863725503) circle (3.5pt);
\end{scriptsize}
\end{tikzpicture}
	}
	\caption{We remove the line $L$.}
	\label{fig: Lemma_4}
	\end{figure}
	Now, since we are looking at curves of degree $a'+1$, the lines $\overline{Q_1P_i}$, for $i = 2t_1+1,\ldots,s_1$, become fixed components and can be removed, see Figure \ref{fig: Lemma_5}, i.e.,
	$$
		\dim\mathcal{L}_{a'+1}(\bbJ) = \dim\mathcal{L}_{a''+1}(\bbJ'),
	$$
	with 
	$$
	\bbJ' = 
	\begin{cases}
		a''Q_1 + 2{Q}_2 +  {J}_1 + \ldots + {J}_{t_1}, & \text{ for } c = 0,1,2,3; \\
		a''Q_1 + {Q}_2 +  {J}_1 + \ldots + {J}_{t_1}, & \text{ for } c = 4; \\
	\end{cases}
	$$
	where
	$$
		a'' = a'-(s_1-2t_1) = 
		\begin{cases}
			2k - (3k-2k) = k& \text{for } c = 0; \\
			2k - (3k+1-2(k+1)) = k+1& \text{for } c = 1;\\
			(2k+1) - (3k+1- 2k) = k & \text{for } c = 2; \\
			(2k+1) - (3k+2 - 2(k+1)) = k+1 & \text{for } c = 3;\\
			(2k+1) - (3k+3 - 2(k+1)) = k & \text{for } c = 4. \\
		\end{cases}
	$$
	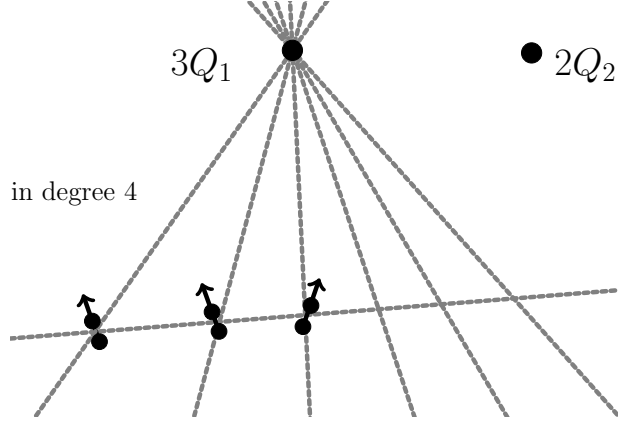
\begin{figure}[h]
	\centering
	\scalebox{0.85}{
	\begin{tikzpicture}[line cap=round,line join=round,x=1.0cm,y=1.0cm]
\clip(-3.,-2.5) rectangle (6.5,4.);
\draw [line width=2.pt,dotted,domain=-3.:6.5, color = gray] plot(\x,{(--6.606070669096441-2.653365106436809*\x)/0.88280517651701});
\draw [line width=2.pt,dotted,domain=-3.:6.5, color = gray] plot(\x,{(--9.676427200671663-2.92575777177952*\x)/1.7123646573334501});
\draw [line width=2.pt,dotted,domain=-3.:6.5, color = gray] plot(\x,{(--8.795388688536372-2.0466723518098617*\x)/1.8237980204281958});
\draw [line width=2.pt,dotted,domain=-3.:6.5, color = gray] plot(\x,{(--3.6452140416492202-6.69929374398375*\x)/-1.7971916286827576});
\draw [line width=2.pt,dotted,domain=-3.:6.5, color = gray] plot(\x,{(-0.3695083120389595--0.028408168922945176*\x)/0.3571005711237858});
\draw [->,line width=2.pt] (0.2648321501824796,-1.1618010130637089) -- (-0.007560515160232151,-0.4065304409771011);
\draw [line width=2.pt,dotted,domain=-3.:6.5, color = gray] plot(\x,{(--8.607111108960233-5.488725122958664*\x)/0.2637309371017562});
\draw [->,line width=2.pt] (1.5772695377428179,-1.0875121043338787) -- (1.8372807182972242,-0.33224153224727093);
\draw [->,line width=2.pt] (-1.6047720528515872,-1.3227603153116745) -- (-1.8647832334059937,-0.5798712280133718);
\draw [line width=2.pt,dotted,domain=-3.:6.5, color = gray] plot(\x,{(-4.489159349354937-5.241095427192564*\x)/-3.673581225579258});
\begin{scriptsize}
\draw [fill=black] (1.4125904789475021,3.2373534485241113) circle (4.5pt);
\draw[color=black] (0.,3.) node {\LARGE $3Q_1$};
\draw [fill=black] (5.153549103316712,3.1969296513445316) circle (4.5pt);
\draw[color=black] (6.,3.) node {\LARGE $2Q_2$};
\draw[color=black] (-2.,1.) node {\large in degree $4$};
\draw [fill=black] (1.5772695377428179,-1.0875121043338787) circle (3.5pt);
\draw [fill=black] (-1.716205415946333,-1.0008417108157432) circle (3.5pt);
\draw [fill=black] (-1.6047720528515872,-1.3227603153116745) circle (3.5pt);
\draw [fill=black] (0.2648321501824796,-1.1618010130637089) circle (3.5pt);
\draw [fill=black] (0.15570386993870922,-0.8567726333969978) circle (3.5pt);
\draw [fill=black] (1.7029018983959778,-0.7564612511753777) circle (3.5pt);
\end{scriptsize}
\end{tikzpicture}
}
\caption{We remove all the lines passing through $Q_1$ and containing the directions of the three $(2,2)$-jets.}
\label{fig: Lemma_5}
	\end{figure}
	
	Now, if $c = 0,1,2,3$, since we are looking at curves of degree $a''+1$, we have that the line $\overline{Q_1Q_2}$ is a fixed component and can be removed. After that, specialize the point $Q_2$ to a general point lying on $L$; see Figure \ref{fig: Lemma_6}.
	\begin{figure}[h]
		\begin{subfigure}[b]{0.45\textwidth}
		\scalebox{0.75}{\begin{tikzpicture}[line cap=round,line join=round,x=1.0cm,y=1.0cm]
\clip(-3.,-2.5) rectangle (7.,4.);
\draw [line width=2.pt,dotted,domain=-3.:7., color = gray] plot(\x,{(--3.6452140416492202-6.69929374398375*\x)/-1.7971916286827576});
\draw [line width=2.pt,dotted,domain=-3.:7., color = gray] plot(\x,{(-0.3695083120389595--0.028408168922945176*\x)/0.3571005711237858});
\draw [->,line width=2.pt] (0.2648321501824796,-1.1618010130637089) -- (-0.007560515160232151,-0.4065304409771011);
\draw [line width=2.pt,dotted,domain=-3.:7., color = gray] plot(\x,{(--8.607111108960233-5.488725122958664*\x)/0.2637309371017562});
\draw [->,line width=2.pt] (1.5772695377428179,-1.0875121043338787) -- (1.8372807182972242,-0.33224153224727093);
\draw [->,line width=2.pt] (-1.6047720528515872,-1.3227603153116745) -- (-1.8647832334059937,-0.5798712280133718);
\draw [line width=2.pt,dotted,domain=-3.:7., color = gray] plot(\x,{(-4.489159349354937-5.241095427192564*\x)/-3.673581225579258});
\draw [line width=2.pt,dotted,domain=-3.:7., color = gray] plot(\x,{(--12.167907574406456-0.04042379717957978*\x)/3.7409586243692097});
\begin{scriptsize}
\draw [fill=black] (1.4125904789475021,3.2373534485241113) circle (4.5pt);
\draw[color=black] (0.25,2.8) node {\LARGE $2Q_1$};
\draw [fill=black] (5.153549103316712,3.1969296513445316) circle (4.5pt);
\draw[color=black] (5.5,2.5) node {\LARGE $Q_2$};
\draw[color=black] (-2.,1.) node {\large in degree $3$};
\draw [fill=black] (1.5772695377428179,-1.0875121043338787) circle (3.5pt);
\draw [fill=black] (-1.716205415946333,-1.0008417108157432) circle (3.5pt);
\draw [fill=black] (-1.6047720528515872,-1.3227603153116745) circle (3.5pt);
\draw [fill=black] (0.2648321501824796,-1.1618010130637089) circle (3.5pt);
\draw [fill=black] (0.15570386993870922,-0.8567726333969978) circle (3.5pt);
\draw [fill=black] (1.7029018983959778,-0.7564612511753777) circle (3.5pt);
\end{scriptsize}
\end{tikzpicture}}
		\caption{}
		\end{subfigure}
		~~~
		\begin{subfigure}[b]{0.45\textwidth}
		\scalebox{0.75}{
		\begin{tikzpicture}[line cap=round,line join=round,x=1.0cm,y=1.0cm]
\clip(-3.5,-3.) rectangle (7.,4.5);
\draw [line width=2.pt,dotted,domain=-3.5:7., color = gray] plot(\x,{(--3.6452140416492202-6.69929374398375*\x)/-1.7971916286827576});
\draw [line width=2.pt,dotted,domain=-3.5:7., color = gray] plot(\x,{(-0.3695083120389595--0.028408168922945176*\x)/0.3571005711237858});
\draw [->,line width=2.pt] (0.2648321501824796,-1.1618010130637089) -- (-0.007560515160232151,-0.4065304409771011);
\draw [line width=2.pt,dotted,domain=-3.5:7., color = gray] plot(\x,{(--8.607111108960233-5.488725122958664*\x)/0.2637309371017562});
\draw [->,line width=2.pt] (1.5772695377428179,-1.0875121043338787) -- (1.8372807182972242,-0.33224153224727093);
\draw [->,line width=2.pt] (-1.6047720528515872,-1.3227603153116745) -- (-1.8647832334059937,-0.5798712280133718);
\draw [line width=2.pt,dotted,domain=-3.5:7., color = gray] plot(\x,{(-4.489159349354937-5.241095427192564*\x)/-3.673581225579258});
\draw [line width=2.pt,dotted,domain=-3.5:7., color = gray] plot(\x,{(--15.681208319673534-3.9398807354301373*\x)/3.12470370179319});
\begin{scriptsize}
\draw [fill=black] (1.4125904789475021,3.2373534485241113) circle (4.5pt);
\draw[color=black] (0.5,3) node {\LARGE $2Q_1$};
\draw [fill=black] (4.537294180740692,-0.7025272869060261) circle (4.5pt);
\draw[color=black] (5.,0.) node {\LARGE $Q_2$};
\draw[color=black] (-2.,1.) node {\large in degree $3$};
\draw [fill=black] (1.5772695377428179,-1.0875121043338787) circle (3.5pt);
\draw [fill=black] (-1.716205415946333,-1.0008417108157432) circle (3.5pt);
\draw [fill=black] (-1.6047720528515872,-1.3227603153116745) circle (3.5pt);
\draw [fill=black] (0.2648321501824796,-1.1618010130637089) circle (3.5pt);
\draw [fill=black] (0.15570386993870922,-0.8567726333969978) circle (3.5pt);
\draw [fill=black] (1.7029018983959778,-0.7564612511753777) circle (3.5pt);
\end{scriptsize}
\end{tikzpicture}
		}
		\caption{}
		\end{subfigure}
		\caption{In ({\sc A}) we have removed the line $\overline{Q_1Q_2}$ and then, in ({\sc B}), we specialize the point $Q_2$ to lie on $L$.}
		\label{fig: Lemma_6}
	\end{figure}
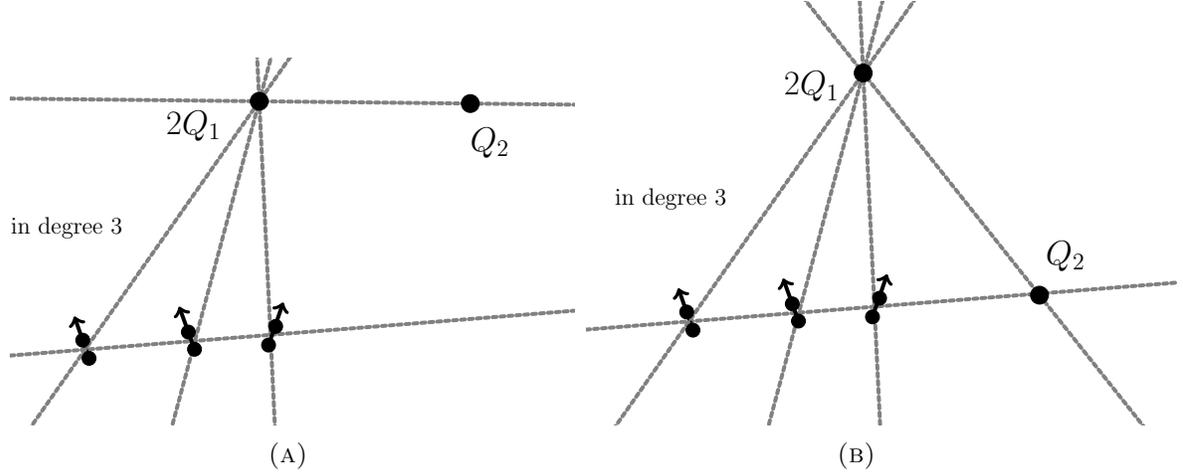
	
	Therefore, for any $c$, we reduced to computing the dimension of the linear system
	$
		\mathcal{L}_{\widetilde{a}+1}({\bbJ}''),
	$
	where ${\bbJ}'' = a'''Q_1 +  Q_2 + {J}_1 + \ldots + {J}_{t_1} $ with
	$$
		{a}''' = 
		\begin{cases}
			k-1 & \text{for } c = 0,2; \\
			k & \text{for } c = 1,3,4;
		\end{cases}
	$$
	and where
	\begin{itemize}
		\item $Q_1$ is a general point;
		\item $Q_2$ is a general point on $L$;
		\item $J_i$'s are $2$-jet points with support on $L$ and general direction.
	\end{itemize}
	Note that this can be done because, for $c \leq 2$, we may assume $k \geq 1$, since $a \geq 3$.
	
	Moreover,
	$$
		\deg({\bbJ}'' \cap L) = t_1 + 1 \quad = \quad
		\begin{cases}
		 	k+1 & \text{for } c = 0,2; \\
			k+3 & \text{for } c = 1,3,4.
		\end{cases} \quad = \quad {a}'''+2.
	$$
	Therefore, the line $L$ is a fixed component for 	$
		\mathcal{L}_{{a}'''+1}({\bbJ}'')
	$ and can be removed, i.e.,
	$$
		\dim\mathcal{L}_{{a}'''+1}({\bbJ}'') =
		\dim\mathcal{L}_{{a}'''}({a}'''Q_1 + \widetilde{P}_1 + \ldots + \widetilde{P}_{t_1}),
	$$
	where the $\widetilde{P}_i$'s are collinear. Since $t_1 = {a}'''+1$, this linear system is empty and this concludes the proof of the case $s = s_1$.
	
	\bigskip
	{\sc Step 3: case $s = s_2$.} First of all, note that the case $c = 4$ has been already proved since in that case we have $s_1 = s_2$. Moreover, the case $c = 1$ follows easily because we have that the linear system in the case $s = s_1$ has dimension $1$. 
	
	Hence, we are left just with the cases $c = 0,2,3$. We have
	$$
		s_2 = \begin{cases}
			3k+1 & \text{for } c = 0;\\
			3k+2 & \text{for } c = 2;\\
			3k+3 & \text{for } c = 3.\\
		\end{cases}, \quad \text{ and } \quad 
		a' = a-s_2 = \begin{cases}
			2k-1 & \text{for } c = 0;\\
			2k & \text{for } c = 2,3.
		\end{cases}, 
	$$ 
	Now, define
	$$
		t_2 = 
		\begin{cases}
		 	k & \text{for } c = 0;\\
			k+1 & \text{for } c = 2,3.
		\end{cases}
	$$
	Then, we proceed similarly as before. Note that $s_2 \geq 2t_2$. Now, we specialize $2t_2$ lines in such a way that, for $i = 1,\ldots,t_2$:
	\begin{itemize}
		\item the lines $\overline{Q_1P_{2i-1}}$ and $\overline{Q_1P_{2i}}$ both degenerate to a general line $R_i$ passing through $Q_1$;
		\item the point $P_{2i-1}$ and the point $P_{2i}$ both degenerate to a general point $\widetilde{P}_i$ on $R_i$.
	\end{itemize}
	In this way, from the degeneration of $J_{2i-1}$ and $J_{2i}$, we obtain the $(2,2)$-jet $W_i$ defined by the scheme-theoretic intersection $2R_i \cap 2\overline{P_{2i-1}P_{2i}}$, for any $i = 1,\ldots,t_2$. Moreover, we specialize the support of the $W_i$'s and the double point $2Q_2$ on a line $L$. Note that this can be done because $s_2 - 2t_2 \geq 0$. By abuse of notation, we call again ${\bbX}$ the scheme obtained after such a specialization. 
		
		In this way, we have that 
	$$
		\deg({\bbX}\cap L) = 
		\begin{cases}
			2k + 2 & \text{for } c = 0 \\
			2(k+1) + 2 & \text{for } c = 2,3; 
		\end{cases} = a'+3.
	$$
	Therefore, the line $L$ is a fixed component and can be removed. Hence,
	$$
		\dim\mathcal{L}_{a'+2}({\bbX}) = \dim\mathcal{L}_{a'+1}(\bbJ),
	$$
	where $\bbJ = a'Q_1 + Q_2 + {J}_1 + \ldots + {J}_{t_2} + J_{2t_2+1} + \ldots + J_{s_2}$, where
	\begin{itemize}
		\item ${J}_i$ is a $2$-jet with support on $\widetilde{P}_i\in L$ with general direction, for all $i = 1,\ldots, t_2$;
		\item ${J}_i$ is a $2$-jet contained in $\overline{Q_1P_i}$, for all $i = 2t_2+1,\ldots,s_2$.
	\end{itemize}
	Now, since we look at curves of degree $a'+1$, the lines $\overline{Q_1P_i}$, for $i = 2t_2+1,\ldots,s_2$, become fixed components and can be removed. Hence, we get
	$$
		\dim\mathcal{L}_{a'+1}(\bbJ) = \dim\mathcal{L}_{a''+1}(\bbJ'),
	$$
	where $\bbJ' = a''Q_1 + Q_2 + {J}_1 + \ldots + {J}_{t_2}$ and
	$$
		a'' = a' - (s_2 - 2t_2) = 
		\begin{cases}
			k-2 & \text{for } c = 0;\\
			k & \text{for } c = 2; \\
			k -1 & \text{for } c = 3.
		\end{cases}
	$$ 
	Hence,  
	$$
		\deg(\bbJ' \cap L) = 1+t_2 = 
		\begin{cases}
			k+1 & \text{for } c = 0;\\
			k+2 & \text{for } c = 2,3;
		\end{cases} > a''+1;
	$$
	the line $L$ is a fixed component and can be removed. Hence, we are left with the linear system $\mathcal{L}_{a''}(a''Q_1 + \widetilde{P}_1 + \ldots + \widetilde{P}_{t_2})$. Since $t_2 = a''+1$, this linear system is empty and this concludes the proof of the case $s = s_2$. 
\end{proof}

\section{Main result}\label{sec: main}
We are now ready to consider our general case. First, we answer to Question \ref{question: 0-dim P2}.

\begin{remark}[\sc Strategy of the proof]\label{rmk: strategy}
Our strategy to compute the dimension of $\mathcal{L}_{a+b}(\bbX_{a,b;s})$ goes as follows. Fix a general line $L$. Then, we consider a specialization ${\bbX} = aQ_1 + bQ_2 + {Y}_1 + \ldots + {Y}_s$ of the $0$-dimensional scheme $\bbX_{a,b;s}$, where the $Y_i$'s are $(3,2)$-points with support at $P_1,\ldots,P_s$, respectively, and such that:
\begin{itemize}
	\item ${Y}_1,\ldots,{Y}_x$ have support on $L$ and $\deg({Y}_i \cap L) = 3$, for all $i = 1,\ldots,x$;
	\item ${Y}_{x+1},\ldots,{Y}_{x+y}$ have support on $L$, $\deg({Y}_i \cap L) = 2$ and ${\rm Res}_L({Y}_i)$ is a $3$-jet contained in $L$, for all $i = x+1,\ldots,x+y$;
	\item ${Y}_{x+y+1},\ldots,{Y}_{x+y+z}$ have support on $L$, $\deg({Y}_i \cap L) = 2$ and ${\rm Res}_L({Y}_i)$ is a $2$-fat point, for all $i = x+y+1,\ldots,x+y+z$;
	\item ${Y}_{x+y+z+1},\ldots,{Y}_{s}$ are generic $(3,2)$-points. 
\end{itemize}
We will show that it is possible to chose $x,y,z$ in such a way that we start a procedure that allows us to remove twice the line $L$ and the line $\overline{Q_1Q_2}$ because, step-by-step, they are fixed components for the linear systems considered. A similar idea has been used by the authors, together with E. Carlini, to study Hilbert functions of triple points in $\bbP^1\times\bbP^1$; see \cite{CCO17}.

Finally, we remain with the linear system of curves of degree $a+b-2$ passing through the $0$-dimensional scheme $(a-2)Q_1+(b-2)Q_2 + {P}_{x+y+1}+\ldots+{P}_{x+y+z} + {Y}_{x+y+z+1} + \ldots +{Y}_{s}$, where the simple points ${P}_{x+y+1},\ldots,{P}_{x+y+z+1}$ are collinear and lie on the line $L$; see Figure \ref{fig: main}, where the degree goes down from $a+b$ to $a+b-2$ and finally to $a+b-4$. Hence, combining a technical lemma to deal with the collinear points (Lemma \ref{lemma: collinear}), we conclude our proof by a two-step induction, using the results of the previous section for $b \leq 2$.
\end{remark}

\begin{figure}[h]
\xymatrix{
		\includegraphics[scale=0.36]{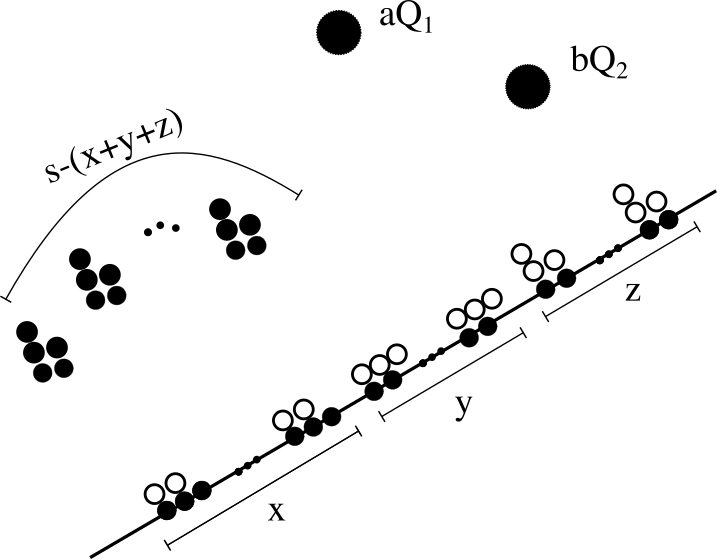}
		\ar@{=>}[r]<10ex> & \includegraphics[scale=0.36]{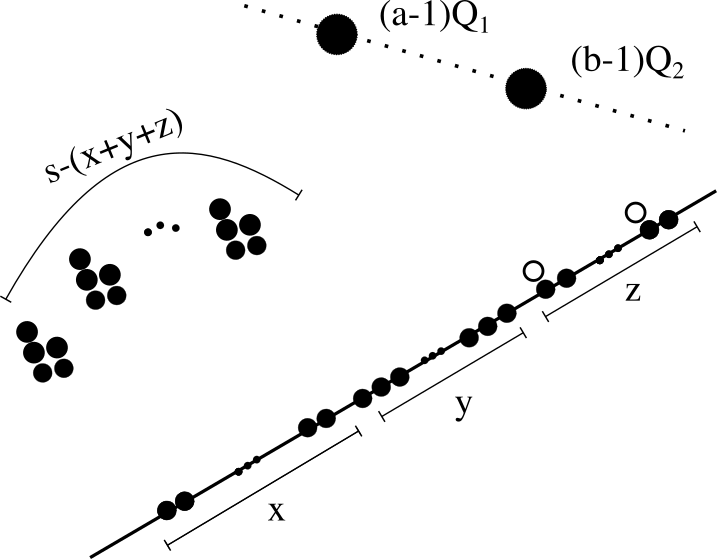} \ar@{=>}[d] \\
			& \includegraphics[scale=0.36]{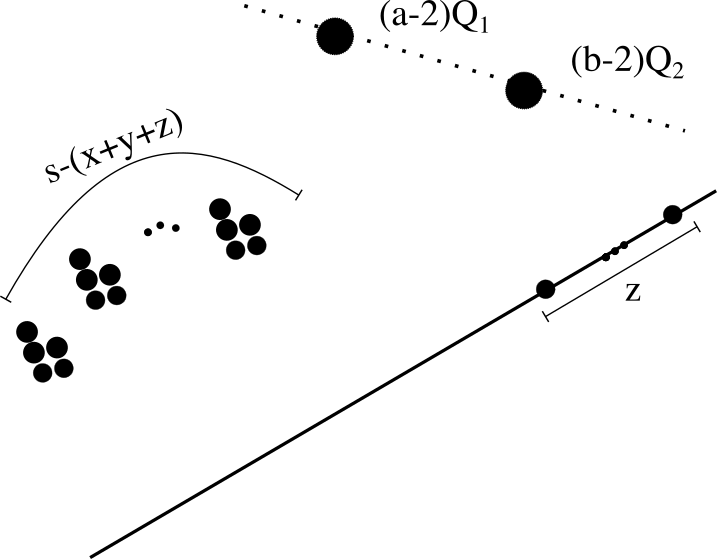}
		}
\caption{The three main steps of our proof of Theorem \ref{thm: main P2}.}
\label{fig: main}
\end{figure}

This strategy works in general, except for a few number of cases that we need to consider separately.

\begin{lemma}\label{lemma: small cases}
	In the same notation as above:
	\begin{enumerate}
		\item if $(a,b) = (3,3)$, then $\HF_{\bbX_{3,3;s}}(6) = \max\{0, 16-5s\}$;
		\item if $(a,b) = (5,3)$, then $\HF_{\bbX_{5,3;s}}(8) = \max\{0, 24-5s\}$;
		\item if $(a,b) = (4,4)$, then $\HF_{\bbX_{4,4;s}}(8) = \max\{0, 25-5s\}$.
	\end{enumerate}
\end{lemma}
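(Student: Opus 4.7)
The plan is to handle each of the three bi-degrees separately. For each $(a,b)$, Lemma~\ref{lemma: super- and sub-abundance} reduces the problem to the two critical values $s_1 = \lfloor (a+1)(b+1)/5 \rfloor$ and $s_2 = \lceil (a+1)(b+1)/5 \rceil$, namely $(s_1,s_2) = (3,4)$ for $(3,3)$, $(s_1,s_2) = (4,5)$ for $(5,3)$, and $s_1 = s_2 = 5$ for $(4,4)$ (since $25 = 5\cdot 5$ leaves no slack). These are precisely the boundary bi-degrees for which the general stripping procedure of Remark~\ref{rmk: strategy} cannot be run: the arithmetic of removing $L$ and $\overline{Q_1Q_2}$ with the required multiplicities is too tight, and so each case needs a tailored specialization.

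For each critical pair $((a,b),s)$ I would construct a specialization $\widetilde{\bbX}$ of $\bbX_{a,b;s}$ and appeal to semicontinuity of the Hilbert function. The recipe follows the template of Lemma~\ref{lemma: b = 2}: first, pair up the $(3,2)$-points so that the directions of each pair lie on a common line through $Q_1$, and apply Lemma~\ref{lemma: degeneration} to replace each such pair with a $(2,2)$-jet; second, collinearize the supports of these $(2,2)$-jets together with an auxiliary reduced set $A$ of general points (in the subabundance case, with $|A|$ equal to the virtual dimension) or with a piece of $bQ_2$ on a single general line $L$. Third, verify that $\deg(\widetilde{\bbX} \cap L) > a+b$, so that $L$ splits off as a fixed component; at the reduced degree the lines $\overline{Q_1 P_i}$ carrying any surviving unpaired $2$-jets become fixed components of the residual system and can also be stripped. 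Along the way the direction of each residue of a $(2,2)$-jet has to be tracked via Lemma~\ref{lemma: residue}: choosing the jet's directions with $L$ among them makes the residue itself contained in $L$, which is what allows $L$ to be stripped a second time. Iterating, the residual is a small scheme supported on a few collinear simple points together with reduced-multiplicity copies of $Q_1, Q_2$, whose Hilbert function can be read off directly.

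The main obstacle will be the symmetric case $(4,4)$, $s=5$. Here the count $5s=(a+1)(b+1)=25$ is perfectly balanced, so the linear system must be shown to be empty and every parameter has to be accounted for. With $a = b$ there is no preferred axis, and $s = 5$ being odd prevents a clean pair-up: two of the $(3,2)$-points fuse into one $(2,2)$-jet, two more into a second $(2,2)$-jet, and a fifth $(3,2)$-point is left over. The delicate step is to tune the directions of the two $(2,2)$-jets, via Lemma~\ref{lemma: residue}, so that after the first stripping of $L$ the residues of both jets lie on $L$ as $2$-jets, and collinearity of the supports together with the leftover point (and suitable specialization of $4Q_1$, $4Q_2$ on or through $L$ or $\overline{Q_1Q_2}$) forces $L$ (and possibly $\overline{Q_1Q_2}$) to be fixed at the next degree as well. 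The remaining cases $(3,3)$, $s=4$ and $(5,3)$, $s=5$ superabundance follow the same outline with genuine slack, and the subabundance checks $(3,3)$, $s=3$ and $(5,3)$, $s=4$ reduce, after absorbing the reduced set $A$ into $L$, to straightforward dimension counts on a linear system of degree $a+b-2$ through $(a-1)Q_1$, $(b-1)Q_2$, and a handful of simple points.
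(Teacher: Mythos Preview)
Your plan has a genuine gap at the very first move. Lemma~\ref{lemma: degeneration} is a statement about $2$-jets, not about $(3,2)$-points: the schemes being collided there have ideals of the shape $(\ell_i(\lambda),n^2)$, length~$2$, and their flat limit is the length-$4$ $(2,2)$-jet $(m^2,n^2)$. Two $(3,2)$-points have total length~$10$; their collision limit is a far more complicated length-$10$ scheme, not a $(2,2)$-jet. So the sentence ``pair up the $(3,2)$-points \ldots\ and apply Lemma~\ref{lemma: degeneration} to replace each such pair with a $(2,2)$-jet'' is not a legal step.

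If instead you intend to first strip the lines $\overline{Q_1P_i}$ (specializing the direction of $Y_i$ along $\overline{Q_1P_i}$) so that each $(3,2)$-point leaves behind a $2$-jet, as in Step~1 of Lemma~\ref{lemma: b = 2}, and only then collide the resulting $2$-jets via Lemma~\ref{lemma: degeneration}, that preliminary stripping does not go through here. On $\overline{Q_1P_i}$ the scheme has degree $a+3$, while the curves have degree $a+b$; for the line to be a fixed component you need $a+3>a+b$, i.e.\ $b\le 2$. For $(3,3)$, $(5,3)$, $(4,4)$ you have $b\ge 3$, so none of those lines is forced, and the template of Lemma~\ref{lemma: b = 2} cannot be launched.

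The paper's proof avoids the $(2,2)$-jet machinery entirely for these three bi-degrees. It specializes the \emph{supports} of a few of the $(3,2)$-points onto a general line $L$ (for $(4,4)$, onto a line through $Q_2$), choosing their directions so that each meets $L$ in degree~$3$ --- and, in the $(4,4)$ case, using the differential Horace freedom of Example~\ref{example: (3,2)-point} to let one of them meet $L$ in degree~$2$ with a residue meeting $L$ in degree~$3$. This makes $L$ (and, interleaved, $\overline{Q_1Q_2}$ and $\overline{Q_1P_1}$) fixed components that can be peeled off one by one; for $(4,4)$ two strippings of $L$ reduce the question to $\calL_6(\bbX_{4,2;3})$, already settled by Lemma~\ref{lemma: b = 2}. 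If you want to salvage your approach, you would first need a substitute for the failed Step~1 that brings the $(3,2)$-points down to $2$-jets without relying on $b\le 2$; the paper's argument shows that no such detour is needed.
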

\begin{proof}
	{\it (1)} If $(a,b) = (3,3)$, we have to consider $s_1 = 3$ and $s_2 = 4$. Let $\bbX_{3,3;3} = 3Q_1 + 3Q_2 + Y_1 + Y_2 + Y_3$, where $P_1,P_2,P_3$ is the support of $Y_1,Y_2,Y_3$, respectively. We specialize the scheme such that $\deg(Y_1 \cap \overline{Q_1P_1}) = 3$ and $\deg(Y_2\cap \overline{P_2P_3}) = \deg(Y_3\cap \overline{P_2P_3}) = 3$. Let $A$ be a generic point on the line $\overline{P_2P_3}$. Therefore, we have that the line $\overline{P_2P_3}$ is a fixed component for $\calL_{6}(\bbX_{3,3;3} + A)$ and we remove it. Let $\bbX' = {\rm Res}_L(\bbX_{3,3;3}+A)$. Then:
	\begin{itemize}
		\item the line $\overline{Q_1Q_2}$ is a fixed component for $\calL_5({\bbX}')$;
		\item the line $\overline{Q_1P_1}$ is a fixed component for $\calL_4({\rm Res}_{\overline{Q_1Q_2}}({\bbX'}))$;
		\item the line $\overline{P_2P_3}$ is a fixed component for $\calL_3({\rm Res}_{\overline{Q_1P_1}\cdot \overline{Q_1Q_2}}({\bbX}'))$.
	\end{itemize}
	Therefore, we obtain that
		$\dim\calL_6(\bbX_{3,3;3}+A) = \dim\calL_2(Q_1 + 2Q_2 + J_1)$, where $J_1$ is a $2$-jet lying on $\overline{Q_1P_1}$. The latter linear system is empty and therefore, since the expected dimension of $\calL_6(\bbX_{3,3;3})$ is $1$, we conclude that $\dim\calL_6(\bbX_{3,3;3}) = 1$. As a direct consequence, we also conclude that $\dim\calL_6(\bbX_{3,3;4}) = 0$.
	
	\medskip
	{\it (2)} If $(a,b) = (5,3)$, then $s_1 = 4$ and $s_2 = 5$. Let ${\bbX} = 5Q_1 + 3Q_2 + Y_1 + {Y}_2 + \ldots + {Y}_4$, where we specialize the support of ${Y}_2, Y_3, Y_4$ to be collinear on a line $L$ and $\deg(Y_2 \cap L) = \deg(Y_3 \cap L) = \deg(Y_4 \cap L) = 3$. 
	Now, $L$ is a fixed component for $\calL_8({\bbX})$ and $\overline{Q_1Q_2}$ is a fixed component for $ \calL_7({\rm Res}_L({\bbX}))$. Let $\bbX' = {\rm Res}_{\overline{Q_1Q_2}\cdot L}({\bbX}) = 4Q_1 + 2Q_2 + Y_1 + J_2 + J_3 + J_4$, where $J_i$'s are $2$-jets lying on $L$. Now, consider a generic point $A$ on $L$.
	In this way, $L$ is a fixed component for $\calL_6(\bbX' + A)$, $\overline{Q_1Q_2}$ is a fixed component for $\calL_5({\rm Res}_{L}({\bbX}'+A))$ and $\overline{Q_1P_1}$ is a fixed component for $\calL_4({\rm Res}_{\overline{Q_1Q_2}\cdot L}(\bbX'+A))$, where $P_1$ is the support of $Y_1$. Hence, we have 
	$$
		\dim\calL_6({\bbX}'+A) = \dim\calL_3(2Q_1 + Q_2 + 2P_1) = 3.
	$$
	Since the expected dimension of $\calL_6(\bbX')$ is $4$ and $\dim\calL_6({\bbX}'+A) = 3$, we conclude that $\dim\calL_{8}(\bbX_{5,3;4}) = \dim\calL_6({\bbX}') = 4$, as expected.
	
	In the case $s = s_2 = 5$, we consider ${\bbX} = 5Q_1 + 3Q_2 + Y_1 + Y_2 + {Y}_3 + \ldots + {Y}_5$, where the support of ${Y}_3, Y_4, Y_5$ are three collinear points ${P}_3, {P}_4$ and ${P}_5$, lying on a line $L$, and $\deg(Y_3 \cap L) = \deg(Y_4 \cap L) = \deg(Y_5 \cap L) = 3$. Then, $L$ is a fixed component for  $ \calL_8({\bbX})$ and $\overline{Q_1Q_2}$ is a fixed component for $ \calL_7({\rm Res}_L({\bbX}))$. Now, specializing the scheme such that $\deg(Y_1 \cap \overline{Q_1P_1}) = \deg(Y_2 \cap \overline{Q_1P_2}) = 3$, the lines $\overline{Q_1P_1}$ and $\overline{Q_1P_2}$ become also fixed components of the latter linear system and can be removed. 
Hence, 
	$$
	\dim \calL_8({\bbX}) = \dim \calL_4(\bbY),
	$$
	where $\bbY = 2Q_1 + 2Q_2 + J_1 + J_2 + {J}_3 + \ldots + {J}_5$, where $J_1,J_2$ are $2$-jets lying on $\overline{Q_1P_1}$ and $\overline{Q_1P_2}$, respectively, and $J_3,J_4,J_5$ are $2$-jets lying on $L$. Now:
	\begin{itemize}
		\item $L$ is a fixed component for $\calL_4(\bbY)$;
		\item $\overline{Q_1Q_2}$ is a fixed component for $\calL_3({\rm Res}_L(\bbY))$;
		\item $\overline{Q_1P_1}$ and $\overline{Q_1P_2}$ are fixed components for $\calL_2({\rm Res}_{\overline{Q_1Q_2}\cdot L}(\bbY))$.
	\end{itemize}
	From this, we conclude that $\dim\calL_8(\bbX) = \dim\calL_4(\bbY) = \dim\calL_0(Q_2) = 0$, as expected.
	
	\medskip
	{\it (3)} If $(a,b) = (4,4)$, we have $s_1 = s_2 = 5$. Consider ${\bbX} = 4Q_1 + 4Q_2 + Y_1+ Y_2+Y_3 + {Y}_4 + {Y}_5$, where the supports of ${Y}_4$ and ${Y}_5$ are collinear with $Q_2$ on a line $L$. In this specialization, we also assume that:
	\begin{itemize}
		\item $\deg({Y}_4 \cap L) = 3$;
		\item $\deg({Y}_5 \cap L) = 2$ and $\deg({\rm Res}_L({Y}_5) \cap L) = 3$.
	\end{itemize}
	Therefore, we obtain that $L$ is a fixed component for the linear system and can be removed twice. Hence,
	$$
		\dim\calL_8({\bbX}) = \dim\calL_6({\rm Res}_{2L}({\bbX})),
	$$
	where ${\rm Res}_{2L}({\bbX}) = \bbX_{4,2;3}$. By Lemma \ref{lemma: b = 2}, the latter linear system is empty and we conclude.
\end{proof}

The following lemma is a well-known tool to study the Hilbert function of $0$-dimensional schemes which have some reduced component lying on a line. 

\begin{lemma}\label{lemma: collinear}{\rm \cite[Lemma 2.2]{CGG05-SegreVeronese}}
 Let $\bbX \subset \bbP^2$ be a $0$-dimensional scheme, and let $P_1,\ldots,P_s$ be general points on a line $L$. Then:
\begin{enumerate}
\item if $ \dim\calL_{d}(\bbX+P_1+\ldots+P_{s-1}) > \dim\calL_{d-1}({\rm Res}_L(\bbX))$, then
$$
  \dim\calL_d(\bbX+P_1+\ldots+P_s) = \dim\calL_d(\bbX) - s;$$
\item 
if $ \dim\calL_{d-1}({\rm Res}_L(\bbX))=0$ and $\dim\calL_{d}(\bbX)\leq s$, then
$\dim\calL_{d}(\bbX+P_1+\ldots+P_{s})= 0$.
\end{enumerate}
\end{lemma}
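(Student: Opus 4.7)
The plan is to reduce both statements to a single \emph{one-point principle}: for any $0$-dimensional scheme $\bbZ\subset\bbP^2$ and a general point $P\in L$, one expects
$$
\dim\calL_d(\bbZ+P) \;=\; \max\bigl\{\dim\calL_d(\bbZ)-1,\ \dim\calL_{d-1}(\mathrm{Res}_L(\bbZ))\bigr\}.
$$
To prove this principle I would use Castelnuovo's restriction sequence
$$
0 \longrightarrow \calI_{\mathrm{Res}_L(\bbZ)}(d-1) \xrightarrow{\,\cdot\ell\,} \calI_\bbZ(d) \longrightarrow \calI_{\bbZ\cap L,\,L}(d) \longrightarrow 0,
$$
where $\ell$ is the equation of $L$. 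The injection $\ell\cdot\calL_{d-1}(\mathrm{Res}_L(\bbZ))\hookrightarrow\calL_d(\bbZ+P)$, valid because $\calI_\bbZ:\ell=\calI_{\mathrm{Res}_L(\bbZ)}$ and $\ell$ vanishes at $P$, will give the lower bound; the complementary bound will follow by splitting into two cases. If $\dim\calL_d(\bbZ)>\dim\calL_{d-1}(\mathrm{Res}_L(\bbZ))$, then $L$ is not a fixed component, some curve of the system will restrict to a nonzero polynomial on $L$ with only finitely many zeros, and a general $P\in L$ will avoid them, so $\dim\calL_d(\bbZ+P)=\dim\calL_d(\bbZ)-1$. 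If instead the two dimensions coincide, $L$ is a fixed component and $\calL_d(\bbZ+P)=\calL_d(\bbZ)$ trivially.

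The next step is to observe that the residue is stable along the chain: since each $P_i$ is a reduced point lying on $L$, we have $\mathrm{Res}_L(\bbX+P_1+\cdots+P_j)=\mathrm{Res}_L(\bbX)$ for every $j$. Combined with the fact that $j\mapsto\dim\calL_d(\bbX+P_1+\cdots+P_j)$ is non-increasing, this will give part~(1) by a telescoping argument: the hypothesis $\dim\calL_d(\bbX+P_1+\cdots+P_{s-1})>\dim\calL_{d-1}(\mathrm{Res}_L(\bbX))$ forces the strict inequality at every intermediate stage $j\leq s-1$, so the one-point principle will drop the dimension by exactly one at each step, yielding a total drop of~$s$.

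For part~(2), set $k:=\dim\calL_d(\bbX)\leq s$. The assumption $\dim\calL_{d-1}(\mathrm{Res}_L(\bbX))=0$ means that the ``floor'' in the one-point principle is $0$, so as long as the running dimension is strictly positive, each newly added general point on $L$ will decrease it by one. After at most $k$ points the dimension will have reached $0$, and the remaining $s-k\geq 0$ additions leave it at $0$.

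The main obstacle will be the genericity step inside the one-point principle, namely the semicontinuity argument that a general point of $L$ imposes an independent condition whenever $L$ is not a fixed component; the rest is routine manipulation of the Castelnuovo sequence together with the elementary fact that reduced points lying on $L$ do not contribute to the residue.
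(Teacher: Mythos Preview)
The paper does not actually prove this lemma; it is quoted verbatim from \cite[Lemma~2.2]{CGG05-SegreVeronese} and used as a black box, so there is no argument in the paper to compare against. Your proposal is therefore being assessed on its own.

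Your argument is correct. The one-point principle you isolate,
\[
\dim\calL_d(\bbZ+P)=\max\bigl\{\dim\calL_d(\bbZ)-1,\ \dim\calL_{d-1}(\mathrm{Res}_L(\bbZ))\bigr\}
\]
for $P\in L$ general, is exactly what is needed, and your derivation of it from the Castelnuovo restriction sequence is the standard one: the injection $\ell\cdot\calL_{d-1}(\mathrm{Res}_L(\bbZ))\hookrightarrow\calL_d(\bbZ)$ gives the lower bound, and the dichotomy ``$L$ is or is not a fixed component'' handles the upper bound. The key observation that $\mathrm{Res}_L(\bbX+P_1+\cdots+P_j)=\mathrm{Res}_L(\bbX)$, because reduced points on $L$ have trivial residue, is what makes the telescoping work cleanly, and you state it explicitly. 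The monotonicity of $j\mapsto\dim\calL_d(\bbX+P_1+\cdots+P_j)$ together with the hypothesis at $j=s-1$ does force the strict inequality at every earlier stage, so part~(1) follows; part~(2) is then the degenerate case where the floor is $0$.

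The only place to be slightly more careful in a full write-up is the genericity step you flag yourself: one should say that the locus of $(P_1,\ldots,P_s)\in L^s$ where each $P_{j+1}$ imposes an independent condition on $\calL_d(\bbX+P_1+\cdots+P_j)$ is open and, by the argument above, nonempty---hence a general $s$-tuple works. But this is routine and you have already identified it.
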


Now, we are ready to prove our first main result.

\begin{theorem}\label{thm: main P2}
	Let $a,b$ be positive integers with $ab > 1$. Then, let $\bbX_{a,b;s} \subset \bbP^2$ as in the previous section. Then,
	$$
		\dim \mathcal{L}_{a+b}(\bbX_{a,b;s}) = \max\{0, (a+1)(b+1) - 5s\}.
	$$
\end{theorem}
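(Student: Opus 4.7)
Plan: Proceed by induction on $a + b$. The base cases for the induction are provided by Lemmata~\ref{lemma: b = 1}, \ref{lemma: a = 2}, and \ref{lemma: b = 2} (which cover all $b \leq 2$), together with Lemma~\ref{lemma: small cases}, which handles the three exceptional pairs $(3,3)$, $(5,3)$, $(4,4)$. So assume $a \geq b \geq 3$ and $(a,b) \notin \{(3,3),(5,3),(4,4)\}$. By Lemma~\ref{lemma: super- and sub-abundance} it is enough to verify the statement at the two critical values $s_1 = \lfloor (a+1)(b+1)/5\rfloor$ and $s_2 = \lceil (a+1)(b+1)/5\rceil$; the remaining values of $s$ follow by the monotonicity of Hilbert functions under inclusion.

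The inductive step follows the recipe of Remark~\ref{rmk: strategy}. Fix a general line $L$ and specialize the scheme to $\widetilde{\bbX} = aQ_1 + bQ_2 + \widetilde{Y}_1 + \ldots + \widetilde{Y}_s$ in which the first $x+y+z$ of the $(3,2)$-points are placed on $L$ in three distinct modes. For $x$ of them (type A), the direction coincides with $L$, producing $\deg(\widetilde{Y}_i \cap L) = 3$ and residue a $2$-jet along $L$. For $y$ of them (type B), the direction still lies along $L$, but we read the second vertical slice, as in the computation ${\rm Res}^2_{z_2}(J) = (z_1, z_2^3)$ of Example~\ref{example: (3,2)-point}; this produces $\deg(\widetilde{Y}_i \cap L) = 2$ and a residue which is a $3$-jet contained in $L$. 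For $z$ of them (type C), the direction is transverse to $L$, giving $\deg(\widetilde{Y}_i \cap L) = 2$ and a $2$-fat-point residue supported on $L$. The integers $x,y,z$ are chosen so that $\widetilde{\bbX}$ meets $L$ in degree $a+b+1$ (after also letting $Q_1$ and $Q_2$ flow onto $L$ appropriately); by B\'ezout this forces $L$ to be a fixed component of $\calL_{a+b}(\widetilde{\bbX})$, reducing the problem to $\calL_{a+b-1}({\rm Res}_L(\widetilde{\bbX}))$. Re-examining the residue structure, a further pass of the same idea removes $L$ a second time, and along the way the auxiliary line $\overline{Q_1Q_2}$ also appears as a fixed component of an intermediate linear system.

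After these reductions the residual scheme takes the form $\bbX_{a-2,b-2;s'}$ together with a handful of reduced points collinear on $L$, for a suitable $s'$. Applying Lemma~\ref{lemma: collinear} to absorb the collinear points, together with the inductive hypothesis on $\calL_{(a-2)+(b-2)}(\bbX_{a-2,b-2;s'})$, completes the argument. The main obstacle is the combinatorial bookkeeping: the integers $x$, $y$, $z$ are determined by the residue of $(a+1)(b+1)$ modulo $5$ and by whether $s = s_1$ or $s = s_2$; for each combination one must simultaneously verify that $x,y,z \geq 0$ with $x+y+z \leq s$, that the trace on $L$ has the correct length for the B\'ezout reduction at both stages, and that the residual index $s'$ lands in a range covered by the inductive hypothesis. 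The pairs $(3,3)$, $(5,3)$, $(4,4)$ are precisely those for which this system of arithmetic constraints has no admissible solution, and must consequently be handled \emph{ad hoc} in Lemma~\ref{lemma: small cases}.
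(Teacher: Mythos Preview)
Your overall plan matches the paper's: induction with base cases $b\le 2$ and the three exceptional pairs, reduction to $s_1,s_2$, specialization of $(3,2)$-points onto a line $L$ in three modes, peeling off $L$ twice, and finishing with Lemma~\ref{lemma: collinear} plus the inductive hypothesis on $\bbX_{a-2,b-2;s'}$. Your description of type~B as the second Horace slice (${\rm Tr}^2$, ${\rm Res}^2$ from Example~\ref{example: (3,2)-point}) is exactly right; that is the only way to get trace of length $2$ with residue a $3$-jet \emph{inside} $L$.

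Two points, however, are wrong and would break the argument if carried out as you describe. First, $Q_1$ and $Q_2$ are \emph{not} moved onto $L$. They stay in general position, and the line $\overline{Q_1Q_2}$ is peeled off separately---in fact twice, alternating with the two removals of $L$: one removes $L$, then $\overline{Q_1Q_2}$, then $L$, then $\overline{Q_1Q_2}$, dropping the degree by four and taking $aQ_1+bQ_2$ to $(a-2)Q_1+(b-2)Q_2$. This is what produces the residual $\bbX_{a-2,b-2;s'}$. If you instead slide $Q_1,Q_2$ onto $L$, the trace picks up $a+b$ extra length and the equation $3x+2y+2z=a+b+1$ becomes impossible. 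Second, the triple $(x,y,z)$ does \emph{not} depend on whether $s=s_1$ or $s=s_2$; it depends only on $c=(a+b)\bmod 5$, via an explicit table (e.g.\ $(x,y,z)=(h+1,h-1,0)$ for $c=0$, $(h,h-2,3)$ for $c=1$, etc., where $a+b=5h+c$). These values are chosen so that simultaneously $3x+2y+2z=a+b+1$ and $2x+3y+2z=a+b-1$, which forces both removals of $L$. The constraint $y\ge 0$ then fails precisely when $a+b\in\{6,8\}$, giving the three exceptional pairs. The $s_1$ versus $s_2$ distinction enters only later, in how Lemma~\ref{lemma: collinear} is applied to the $z$ leftover collinear points.
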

\begin{proof}
	We assume that $a \geq b \geq 3$, since the cases $b = 1$ and $b = 2$ are treated in Lemma \ref{lemma: b = 1}, Lemma \ref{lemma: a = 2} and Lemma \ref{lemma: b = 2}. Moreover, as explained in Section \ref{sec: super- and sub-abundance}, we consider $s \leq s_2$.
	
	First, we show how to choose the numbers $x,y,z$ described in the Remark \ref{rmk: strategy}.
	Let $a+b = 5h+c$, with $0\leq c \leq 4$. Then, we fix
	\begin{center}
	\begin{tabular}{c || c | c | c}
		c & x & y & z \\
		\hline
		0 & h+1 & h-1 & 0 \\ 
		1 & h & h-2 & 3 \\
		2 & h+1 & h-1 & 1 \\
		3 & h & h-2 & 4 \\
		4 & h+1 & h-1 & 2 \\
	\end{tabular}
	\end{center}
	Note that in order to make these choices, we need to assume that $y \geq 0$ for any $c$. In particular, this means that the cases with $a+b$ equal to $6$ and $8$ have to be treated in a different way. Hence, the only cases we have to treat differently from the main strategy are $(a,b) = (3,3), (5,3), (4,4)$, already considered in Lemma \ref{lemma: small cases}. 
	
	Note that, for any $h,c$, we have that $x+y+z \leq s_1 = \left\lfloor \frac{(a+1)(b+1)}{5} \right\rfloor$. Indeed, it is enough to see that 
	$$
		\frac{(a+1)(b+1)}{5} - (x+y+z) \geq 0.
	$$
	By direct computation, we have that
	$$
		(a+1)(b+1) - 5(x+y+z) = (a-1)(b-1) - 
		\begin{cases}
			0 & \text{ for } c = 0; \\
			3 & \text{ for } c = 1; \\
			1 & \text{ for } c = 2; \\
			4 & \text{ for } c = 3; \\
			2 & \text{ for } c = 4.
		\end{cases}
	$$
	Since $a,b \geq 3$, we conclude that $x+y+z \leq s_1$.

	With these assumptions, by direct computation, we obtain that, for any $c$,
	$$
		\deg({\bbX} \cap L) = 3x+2y+2z = a+b+1 \quad \text{ and } \quad 
		\deg({\rm Res}_L({\bbX}) \cap L) = 2x+3y+2z = a+b-1.
	$$
	Therefore, we can do the following procedure (see Figure \ref{fig: main}):
	\begin{itemize}
		\item the line $L$ is a fixed component for $\calL_{a+b}({\bbX})$ and it can be removed;
		\item the line $\overline{Q_1Q_2}$ is a fixed component for $\calL_{a+b-1}({\rm Res}_L({\bbX}))$ and it can be removed;
		\item the line $L$ is a fixed component for $\calL_{a+b-2}({\rm Res}_{\overline{Q_1Q_2}\cdot L}({\bbX}))$ and it can be removed;
		\item the line $\overline{Q_1Q_2}$ is a fixed component for $\calL_{a+b-3}({\rm Res}_{\overline{Q_1Q_2}\cdot 2L}({\bbX}))$ and it can be removed.
	\end{itemize}
	Denote $\bbY = {\rm Res}_{2\overline{Q_1Q_2}\cdot 2L}({\bbX})$ which is the union of $\bbX_{a-2,b-2;s-(x+y+z)}$ and a set of $z$ general collinear points on $L$. By the previous reductions, we have that
	$$
		\dim\calL_{a+b}({\bbX}) = \dim\calL_{a+b-4}(\bbY).
	$$
	Now, case by case, we prove that the latter linear system has the expected dimension. In these computations, we proceed by induction on $b$ (with base cases $b = 1$ and $b = 2$ proved in Lemma \ref{lemma: b = 1}, Lemma \ref{lemma: a = 2} and Lemma \ref{lemma: b = 2}) to deal with $\bbX_{a-2,b-2;s-(x+y+z)}$; and by using Lemma \ref{lemma: collinear} to deal with the the $z$ general collinear points that we denote by $A_1,\ldots,A_z$.
	
	\medskip
	\noindent {\sc Case $c = 0$.} In this case, since $z = 0$, we conclude just by induction on $b$; indeed
	\begin{align*}
		\dim\calL_{a+b-4}(\bbY) & = \dim\calL_{a+b-4}(\bbX_{a-2,b-2;s-2h}) = 
		\max\{0, (a-1)(b-1) - 5(s-2h) \} = \\ 
		& = \max\{0, (a+1)(b+1) - 5s \} = {\rm exp}.\dim\calL_{a+b}(\bbX_{a,b;s}).
	\end{align*}
	
	\medskip
	\noindent {\sc Case $c = 1$.} We first consider the case $s = s_1$. We want to use Lemma \ref{lemma: collinear}, so, since $z = 3$, we compute the following:
	\begin{align*}
		\dim\calL_{a+b-4}&(\bbX_{a-2,b-2;s_1-(2h+1)} + A_1 + A_2) \\ 
		& = \dim\calL_{a+b-4}(\bbX_{a-2,b-2;s_1-(2h+1)}) - 2 & \hfill \text{(since 2 points are always general)}\\
		& = \max\{0,(a-1)(b-1)-5(s_1-(2h+1))\}-2 & \hfill \text{(by induction)} \\
		 & =  \max\{0, (a+1)(b+1)-5s_1+1\} = \\
		 & = (a+1)(b+1)-5s_1+1.
	\end{align*}
	\begin{align*}
		\dim \calL_{a+b-5}&({\rm Res}_L(\bbY)) = \dim\calL_{a+b-5}(\bbX_{a-2,b-2;s_1-(2h+1)}) = \\
		& = \dim\calL_{a+b-6}(\bbX_{a-3,b-3;s_1-(2h+1)}) = 
		& \hfill \text{(since $\overline{Q_1Q_2}$ is a fixed component)} \\ 
		& = \max\{0,(a-2)(b-2)-5(s_1-(2h+1))\}  & \hfill \text{(by induction)} \\
		& = \max\{0, ab+7-5s_1\}.
	\end{align*}
	Now, since
	\begin{align*}
		(a+1)(b+1) - 5s_1+ 1 > 0, \quad \quad \text{ and } \\
		\left( (a+1)(b+1) - 5s_1 + 1\right) - (ab+7-5s_1) = a+b-5 > 0,
	\end{align*}
	by Lemma \ref{lemma: collinear}(1), we have that 
	\begin{align*}
		\dim\calL_{a+b-4}(\bbY) & = \max\{0, (a-1)(b-1)-5(s_1-(2h+1)) - 3\} = {\rm exp}.\dim\calL_{a+b}(\bbX_{a,b;s_1}).
	\end{align*}
	Now, consider $s = s_2$. Then, we have
	\begin{align*}
		\dim \calL_{a+b-5}&({\rm Res}_L(\bbY)) = \dim\calL_{a+b-5}(\bbX_{a-2,b-2;s_2-(2h+1)}) \\
		& = \dim\calL_{a+b-6}(\bbX_{a-3,b-3;s_2-(2h+1)}) & \hfill \text{(since $\overline{Q_1Q_2}$ is a fixed component)} \\ 
		& = \max\{0,(a-2)(b-2)-5(s_2-(2h+1))\} & \hfill \text{(by induction)}  \\
		& = \max\{0, ab+7-5s_2\} = 0,
	\end{align*}
	where the latter equality is justified by the fact that, by definition of $s_2$ (see Section \ref{sec: super- and sub-abundance}), we have
	$$
	ab+7-5s_2 \leq ab + 7 - (a+1)(b+1) = 6 - (a+b) \leq 0.
	$$
	Moreover, by definition of $s_2$,
	\begin{align*}
		\dim \calL_{a+b-4}({\rm Res}_L(\bbY)) & = \dim\calL_{a+b-4}(\bbX_{a-2,b-2;s_2-(2h+1)}) = \\
		& = \max\{0, (a-1)(b-1)-5(s_2-(2h+1))\} = & \hfill \text{(by induction)} \\
		& = \max\{0, (a+1)(b+1)-5s_2+3\} \leq 3.
	\end{align*}	
	Hence, we can apply Lemma \ref{lemma: collinear}(2) and conclude that, for $s = s_2$,
	$$\dim\calL_{a+b-4}(\bbY) = 0 = {\rm exp}.\dim\calL_{a+b}(\bbX_{a,b;s_2}).$$
	
	\medskip
	\noindent {\sc Case $c = 2$ and $c = 4$.} Since a generic simple point or two generic simple points always impose independent conditions on a linear system of curves, we conclude this case directly by induction on $b$. Indeed, for $c = 2$, we have
	\begin{align*}
		\dim\calL_{a+b-4}(\bbY) & = \dim\calL_{a+b-4}(\bbX_{a-2,b-2;s-(2h+1)}) - 1 = \\
		& = \max\{0, (a-1)(b-1)-5(s-(2h+1)) \} - 1 = & \hfill \text{(by induction)} \\
		& = \max\{0, (a+1)(b+1)-5s\} = {\rm exp}.\dim\calL_{a+b}(\bbX_{a,b;s}).
	\end{align*}
	Similarly, for $c = 4$, we have
	\begin{align*}
		\dim\calL_{a+b-4}(\bbY) & = \dim\calL_{a+b-4}(\bbX_{a-2,b-2;s-(2h+2)}) - 2 = \\
		& = \max\{0, (a-1)(b-1)-5(s-(2h+2))\} - 2 = & \hfill \text{(by induction)} \\
		& = \max\{0, (a+1)(b+1)-5s\} = {\rm exp}.\dim\calL_{a+b}(\bbX_{a,b;s}).
	\end{align*}
	
	\medskip
	\noindent {\sc Case $c = 3$.} We first consider the case $s = s_1$. We want to use Lemma \ref{lemma: collinear}, so, since $z = 4$, we compute the following:
	\begin{align*}
		\dim\calL_{a+b-4}& (\bbX_{a-2,b-2;s_1-(2h+2)} + A_1 + A_2 + A_3) \\
		& \geq \dim\calL_{a+b-4}(\bbX_{a-2,b-2;s_1-(2h+2)}) - 3  \\
		& = \max\{0, (a-1)(b-1)-5(s_1-(2h+2)) \} - 3 & \hfill \text{(by induction)} \\
		 & =  \max\{0, (a+1)(b+1)-5s_1+1\} = \\
		 & = (a+1)(b+1)-5s_1+1 \geq 1.
	\end{align*}
	\begin{align*}
		\dim \calL_{a+b-5}&({\rm Res}_L(\bbY)) = \dim\calL_{a+b-5}(\bbX_{a-2,b-2;s_1-(2h+2)}) = \\
		& =  \dim\calL_{a+b-6}(\bbX_{a-3,b-3;s_1-(2h+2)}) & \hfill \text{(since $\overline{Q_1Q_2}$ is a fixed component)} \\ 
		& = \max\{0,(a-2)(b-2)-5(s_1-(2h+2))\} & \hfill \text{(by induction)} \\
		& = \max\{0, ab+8-5s_1\}.
	\end{align*}
	Since
	\begin{align*}
		(a+1)(b+1) - 5s_1 + 1 > 0, \quad \quad \text{ and } \\
		\left( (a+1)(b+1) - 5s_1 + 1\right) - (ab+8-5s_1) = a+b-6 > 0,
	\end{align*}
	by Lemma \ref{lemma: collinear}(1), we have that 
	\begin{align*}
		\dim\calL_{a+b-4}(\bbY) & = \max\{0, (a-1)(b-1)-5(s_1-(2h+2))\} - 4 = \\
		& = (a+1)(b+1) - 5s_1 = {\rm exp}.\dim\calL_{a+b}(\bbX_{a,b;s_1}).
	\end{align*}
	Now, consider $s = s_2$. We have
	\begin{align*}
		\dim \calL_{a+b-5}&({\rm Res}_L(\bbY)) = \dim\calL_{a+b-5}(\bbX_{a-2,b-2;s_2-(2h+2)}) = \\
		& =  \dim\calL_{a+b-6}(\bbX_{a-3,b-3;s_2-(2h+2)}) & \hfill \text{(since $\overline{Q_1Q_2}$ is a fixed component)} \\ 
		& = \max\{0,(a-2)(b-2)-5(s_2-(2h+2))\} & \hfill \text{(by induction)}  \\
		& = \max\{0, ab+8-5s_2\} = 0,
	\end{align*}
	where the latter equality is justified by the fact that, by definition of $s_2$ (see Section \ref{sec: super- and sub-abundance}), we have
	$$
		ab+8-5s_2 \leq ab+8-(a+1)(b+1) = 7 - (a+b) \leq 0.	
	$$
	Moreover, by definition of $s_2$,
	\begin{align*}
		\dim \calL_{a+b-4}({\rm Res}_L(\bbY)) & = \dim\calL_{a+b-4}(\bbX_{a-2,b-2;s_2-(2h+2)}) = \\
		& = \max\{0, (a-1)(b-1)-5(s_2-(2h+2))\} = & \hfill \text{(by induction)}  \\
		& = \max\{0, (a+1)(b+1)-5s_2+4\} \leq 4.
	\end{align*}	
	Hence, we can apply Lemma \ref{lemma: collinear}(2) and conclude that, for $s = s_2$,
	$$\dim\calL_{a+b-4}(\bbY) = 0 = {\rm exp}.\dim\calL_{a+b}(\bbX_{a,b;s_2}).$$
\end{proof}

By multiprojective-affine-projective method, from Theorem \ref{thm: main P2}, we answer to Question \ref{question: 0-dim P1xP1}.
\begin{corollary}\label{corollary: main P1xP1}
	Let $a, b$ be integers with $ab > 1$. Let $\bbX\subset\bbP^1\times\bbP^1$ be the union of $s$ many $(3,2)$-points with generic support and generic direction. Then,
	$$
		\HF_\bbX(a,b) = \min\{(a+1)(b+1), 5s\}.
	$$
\end{corollary}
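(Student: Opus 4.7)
The plan is to deduce this multi-graded statement directly from its standard-graded counterpart, Theorem \ref{thm: main P2}, by running the multiprojective-affine-projective method of Section \ref{sec: secant and 0-dim schemes} in reverse.

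First, I would match the scheme $\bbY = \phi(\bbX) + aQ_1 + bQ_2 \subset \bbP^2$ produced by the method with the scheme $\bbX_{a,b;s}$ appearing in Theorem \ref{thm: main P2}. Genericity of the support of $\bbX$ ensures that $\phi$ is well defined on $\bbX$, so the method applies. By the computation \eqref{eq: (3,2)-points construction}, the image under $\phi$ of each $(3,2)$-point of $\bbP^1 \times \bbP^1$ is a $(3,2)$-point of $\bbP^2$ in the sense used in the paper, namely a triple fat point scheme-theoretically intersected with the square of a line through it. Since the $(1,1)$-form defining the direction of each $(3,2)$-point in $\bbP^1 \times \bbP^1$ is generic and the leading term of this form modulo the square of the maximal ideal at the support corresponds bijectively to the tangent direction of a line through $\phi(P) \in \bbP^2$, genericity of the direction is preserved. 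Hence $\bbY$ coincides (as a generic element of the Hilbert scheme) with $\bbX_{a,b;s}$.

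Second, I would invoke the codimension identity encoded by the method, $\dim I(\bbX)_{a,b} = \dim I(\bbY)_{a+b}$, or equivalently $\dim \calL_{a,b}(\bbX) = \dim \calL_{a+b}(\bbX_{a,b;s})$. By Theorem \ref{thm: main P2} the right-hand side equals $\max\{0,\, (a+1)(b+1) - 5s\}$. Since $\dim_{\bbC} \calS_{a,b} = (a+1)(b+1)$, unwinding the definition of the Hilbert function yields
$$
\HF_\bbX(a,b) \;=\; (a+1)(b+1) - \dim I(\bbX)_{a,b} \;=\; (a+1)(b+1) - \max\{0,\, (a+1)(b+1) - 5s\} \;=\; \min\{(a+1)(b+1),\, 5s\},
$$
which is the claimed identity.

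All the real difficulty has been absorbed into Theorem \ref{thm: main P2}; the only thing to verify here is the bookkeeping in the first paragraph, i.e.\ that $\phi$ sends a generic $(3,2)$-point of $\bbP^1 \times \bbP^1$ to a generic $(3,2)$-point of $\bbP^2$. This is essentially immediate from \eqref{eq: (3,2)-points construction}, so the corollary is a direct translation rather than an independent computation.
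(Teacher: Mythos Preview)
Your proposal is correct and follows exactly the approach the paper intends: the corollary is stated without a separate proof, being derived from Theorem \ref{thm: main P2} via the multiprojective-affine-projective method, and you have spelled out precisely that translation. Your extra care in checking that $\phi$ carries generic $(3,2)$-points of $\bbP^1\times\bbP^1$ to generic $(3,2)$-points of $\bbP^2$ is a welcome elaboration of what the paper leaves implicit.
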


In conclusion, by the relation between the Hilbert function of schemes of $(3,2)$-points in $\bbP^1 \times \bbP^1$ and the dimension of secant varieties of the tangential variety of Segre-Veronese surfaces (see Section \ref{section: apolarity}), we can prove our final result.

\begin{theorem}\label{thm: main}
	Let $a,b$ be positive integers with $ab>1$. Then, the tangential variety of any Segre-Veronese surface $SV_{a,b}$ is non defective, i.e., all the secant varieties have the expected dimension.
\end{theorem}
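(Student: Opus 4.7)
The plan is to combine the apolarity/Terracini machinery set up in Section~\ref{section: apolarity} with Corollary~\ref{corollary: main P1xP1}; once that combinatorial reduction is in hand, the theorem is almost a tautology, so this will be a short bookkeeping proof rather than a new argument.

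First, I would reassemble the chain of equivalences established in Section~\ref{sec: secant and 0-dim schemes}. By Terracini's Lemma, $\dim\sigma_s(\tang(SV_{a,b}))$ equals the projective dimension of the span $\langle T_{P_1}\tang(SV_{a,b}),\ldots,T_{P_s}\tang(SV_{a,b})\rangle$ at $s$ general points. By Lemma~\ref{lemma: explicit tangent}, the affine cone $W_i$ over each such tangent space satisfies $W_i^\perp = [\wp_i^3 + (\ell_i)^2]_{a,b}$, where $\wp_i$ is the ideal of a generic point of $\bbP^1\times\bbP^1$ and $\ell_i$ is a generic $(1,1)$-form through it; this is precisely the bi-degree $(a,b)$ component of the ideal of a $(3,2)$-fat point in the sense of Definition~\ref{def: (3,2)-points P1xP1}. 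Using \eqref{equation: intersection perp} to convert the span into an intersection of perpendiculars gives
$$
\dim\sigma_s(\tang(SV_{a,b})) \;=\; \HF_{\bbX}(a,b) - 1,
$$
where $\bbX\subset \bbP^1\times\bbP^1$ is a union of $s$ generic $(3,2)$-points with generic directions. I should explicitly note that the genericity of the directions $\ell_i$ is automatic: the parametrization in Lemma~\ref{lemma: explicit tangent} shows that, as the tangent vector data $(m_1,m_2)$ varies, the $(1,1)$-form $\ell_1 m_2 + \ell_2 m_1$ sweeps out a generic $(1,1)$-form through $P_i$.

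Second, I would invoke Corollary~\ref{corollary: main P1xP1} verbatim: since $ab>1$, it yields $\HF_{\bbX}(a,b) = \min\{(a+1)(b+1),\,5s\}$. Substituting into the identity above gives
$$
\dim\sigma_s(\tang(SV_{a,b})) \;=\; \min\{(a+1)(b+1),\,5s\} - 1,
$$
which, by \eqref{equation: expected dimension tangent}, is exactly the expected dimension. Hence $\tang(SV_{a,b})$ is non-defective for every $s\ge 1$ and every pair $(a,b)$ with $ab>1$, which is the statement of the theorem.

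There is no real obstacle left at this stage, because the entire difficulty has been absorbed into Theorem~\ref{thm: main P2} and the multiprojective-affine-projective translation preceding Corollary~\ref{corollary: main P1xP1}. The only cosmetic point that deserves to be made carefully in the write-up is the justification of the generic-direction hypothesis of Corollary~\ref{corollary: main P1xP1}; this is the one place where the proof could look incomplete if one glossed over the parametrization of $\tang(SV_{a,b})$ by the map $\tau_{a,b}$.
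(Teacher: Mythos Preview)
Your proposal is correct and follows exactly the approach the paper intends: the paper does not even write out a formal proof of Theorem~\ref{thm: main}, but simply declares that it follows from the identity \eqref{equation: dimension of secants and HF} (i.e., $\dim\sigma_s(\tang(SV_{a,b})) = \HF_{\bbX}(a,b)-1$) combined with Corollary~\ref{corollary: main P1xP1}. Your write-up is in fact more careful than the paper's, since you explicitly justify the generic-direction hypothesis via the parametrization $\tau_{a,b}$, a point the paper leaves implicit.
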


\bibliographystyle{alpha}
\bibliography{references}
\end{document}